\newtheorem{theorem}{Theorem}[section]
\newtheorem{prop}[theorem]{Proposition}
\newtheorem{corollary}[theorem]{Corollary}
\newtheorem{fact}[theorem]{Fact}
\newtheorem{definition}[theorem]{Definition}
\newtheorem*{remark}{Remark}
\newcommand{\Open}[1]{\mathcal{O}(#1)}
\newcommand{\topspace}[1]{$\tuple{#1,\Open{#1}}$}
\newcommand{\Set}{\mathbf{Set}}
\newcommand{\Psh}[1]{\mathbf{Psh}\left(#1\right)}
\newcommand{\Sh}[1]{\mathbf{Sh}\left(#1\right)}
\newcommand{\Obj}[1]{\text{Obj}\left(#1\right)}
\newcommand{\Arr}[1]{\text{Arr}\left(#1\right)}
\newcommand{\Seth}[1]{\Set^{\left(#1\right)}}
\newcommand{\chset}{\mathbf{c}\H\text{-}\mathbf{Set}}
\newcommand{\tupleh}[1]{\tuple{#1}^{(\H)}}
\renewcommand{\H}{\mathbb{H}}
\newcommand{\HH}{\mathbb{H}}
\newcommand{\A}{\mathbb{A}}
\newcommand{\B}{\mathbb{B}}
\newcommand{\V}[1]{\text{V}^{(#1)}}
\newcommand{\rar}{\rightarrow}			
\newcommand{\bim}{\leftrightarrow}		
\newcommand{\RAR}{\Rightarrow}			
\newcommand{\sur}{\twoheadrightarrow}	
\newcommand{\inj}{\rightarrowtail}		
\newcommand{\set}[1]{\left\{#1\right\}}
\newcommand{\point}[1]{\left(#1\right)}
\newcommand{\tuple}[1]{\left\langle#1\right\rangle}
\newcommand{\bvalue}[1]{\left\llbracket#1\right\rrbracket}
\newcommand{\hset}{\H\text{-}\mathbf{Set}}    
\newcommand*{\defeq}{\mathrel{\rlap{%
                     \raisebox{0.3ex}{$\m@th\cdot$}}%
                     \raisebox{-0.3ex}{$\m@th\cdot$}}%
                     =}
\newcommand{\quot}{\faktor}
\newcommand{\dom}[1]{\text{dom}\left({#1}\right)}
\newcommand{\img}[1]{\text{img}\left({#1}\right)}
\newcommand{\fun}[1]{\text{fun}\left({#1}\right)}
\renewcommand{\restriction}{\mathord{\upharpoonright}}
\newcommand{\rank}[1]{\varrho\left(#1\right)}
\newcommand{\rankk}[2]{\varrho'\left(#1, #2\right)}
\newcommand{\ord}{\operatorname{On}}
\newcommand{\LZF}{{\mathcal L_\in}}
\newcommand{\LZFe}{{\LZF^\H}}
\newcommand{\vars}{{\operatorname{Vars}_\LZFe}}
\newcommand{\free}{\operatorname{free}}
\newcommand{\vtovtwo}[1]{{\hat{#1}}}
\newcommand{\vtwotov}[1]{{\check{#1}}}
\begin{document}
\allowdisplaybreaks

\title{
	Induced morphisms between Heyting-valued models
}

\author{ 
	{\large Jos\'e Goudet Alvim}\thanks{jose.alvim@usp.br} \\ 
	{\small Universidade de S\~ao Paulo, S\~ao Paulo, Brasil} \\ 
	{\large  Arthur Francisco Schwerz Cahali}\thanks{arthur.cahali@usp.br}\\ 
	{\small Universidade de S\~ao Paulo, S\~ao Paulo, Brasil}\\
	{\large  Hugo Luiz Mariano}\thanks{hugomar@ime.usp.br}\\ 
	{\small Universidade de S\~ao Paulo, S\~ao Paulo, Brasil} 
}

\date{}
\maketitle
    
	\begin{abstract}
		To the best of our knowledge, there are very few results on how Heyting-valued models are affected 
		by the morphisms on the complete Heyting algebras that determine them: the only cases found in the 
		literature are concerning automorphisms of complete Boolean algebras and complete embeddings between 
		them (\emph{i.e}., injective Boolean algebra homomorphisms that preserve arbitrary suprema and 
		arbitrary infima). In the present  work, we consider and explore how more general kinds of morphisms 
		between complete Heyting algebras $\H$ and $\H'$ induce arrows between $\V\H$ and $\V{\H'}$, and 
		between their corresponding localic topoi $\Seth\H$ ($\simeq \Sh\H$) and $\Seth{\H'}$ ($\simeq\Sh{\H'}$). 
		More specifically: any {\em geometric morphism} $f^* :\Seth\H\to\Seth{\H'}$ (that automatically came from 
		a unique locale morphism  $f : \H \to \H'$), can be ``lifted'' to an arrow $\tilde{f}:\V\H\to\V{\H'}$. 
		We also provide some semantic preservation results concerning this arrow $\tilde{f}:\V\H\to\V{\H'}$.
	\end{abstract}

	\section*{Introduction}

    The expression ``Heyting-valued model of set theory'' has two (related) meanings, both parametrized by 
	a complete Heyting algebra $\H$: 
    
    \textbf{(i)}\phantom{\textbf{i}} \ The canonical Heyting-valued models in set theory, $\V\H$, as introduced in the setting of complete 
	boolean algebras in the 1960s by D. Scott, P. Vop\v{e}nka and R. M. Solovay in an attempt to help understand 
	the then recently introduced notion of \emph{forcing} in ZF set theory developed by P. Cohen 
	(\cite{Jec03}, \cite{Kun11}, \cite{Bel05});

    \textbf{(ii)} \ The (local) ``set-like'' behavior of categories called {\em topoi}, particularly in the case of the 
	(localic) topoi of the form $\Sh\H$ (\cite{Bor08c}, \cite{Bel88}).\\

    The concept of a Heyting/Boolean-valued model is nowadays a general model-theoretic notion, whose 
	definition is independent of forcing in set theory: it is a generalization of the ordinary Tarskian 
	notion of structure where the truth values of formulas are not limited to ``true'' and ``false'', but 
	instead take values in some fixed complete Heyting algebra $\H$. More {precisely}, a $\H$-valued model 
	$M$ in a first-order language $L$ consists of an underlying set $M$ and an assignment $\bvalue{\varphi}_{\H}$ 
	of an element of $\H$ to each formula $\varphi$ with parameters in $M$, satisfying convenient conditions.

    The canonical Heyting-valued model in set theory associated to $\H$ is the pair $\tuple{\V\H, \bvalue{ 
	\ }_{\H}}$, where both components are recursively defined. {Explicitly,} $\V\H$ is the proper class 
	$\V\H := \bigcup_{\beta \in \ord} \V\H_\beta$, where $\V\H_\beta$ is the set of all functions $f$ such 
	that $\dom f \subseteq \V\H_\alpha$, for some $\alpha < \beta$, and $\img{f} \subseteq \H$. Whenever 
	$\H$ is a complete boolean algebra $\tuple{\V\H, \bvalue{ \ }_\H}$ is a model of ZFC in the sense that 
	for each axiom $\sigma$ of ZFC, $\bvalue\sigma_\H = 1_\H$; more generaly, if $\H$ is a complete Heyting 
	algebra, then $\tuple{\V\H, \bvalue{ \ }_\H}$ is a model IZF, the intuitionistic counterpart of ZFC.
    
    On the other hand, $\V\H$ may give rise to a localic topos, $\Seth\H$, that is 
	equivalent to the (Grothendieck) topos $\Sh\H$ of all sheaves over the locale (\mbox{= complete} Heyting algebra) 
	$\H$ (\cite{Bel05}, \cite{Bel88}). The objects of $\Seth\H$ are equivalence classes of members of $\V\H$ 
	and the arrows are (equivalence classes of) members $f$ of $\V\H$ such that ``{$\V\H$} believes, with 
	probability $1_\H$, that $f$ is a function''. A general topos encodes an internal (higher-order) 
	intuitionistic logic, given by the ``forcing-like'' Kripke-Joyal semantics, and some form of (local) 
	set-theory (\cite{Bel88}, \cite{Bor08c}); a localic Grothendieck topos is guided by a more well-behaved 
	internal logic and set theory.

    All the considerations above concern a fixed complete Heyting algebra $\H$. However, to the best of 
	our knowledge, there are very few results on how Heyting-valued models are affected by the morphisms between their algebras. The only cases found in the literature (\cite{Bel05}) 
	are concerning automorphisms of complete Heyting algebras and complete embeddings (\emph{i.e}., injective 
	Heyting algebra homomorphisms that preserves arbitrary suprema and arbitrary infima). In the present work, 
	we consider and explore how more general kinds of morphisms between complete Heyting algebras $\H$ and 
	$\H'$ induce arrows between $\V\H$ and $\V{\H'}$, and between their corresponding Heyting topoi 
	$\Seth{\H}$ ($\simeq \Sh\H$) and $\Seth{(\HH')}$ ($\simeq \Sh{\H'}$). The result is: any \emph{geometric
	morphism} $f^* : \Seth\H \to \Seth{\H'}$ (that automatically came from a unique locale morphism $f:\H\to
	\H'$) can be ``lifted'' to an arrow  $\tilde f: \V\H\to\V{\H'}$.
    
  
	\paragraph{Outline:} 
		In \textbf{Section \ref{sec:prelim}} we provide the main definitions on sheaves over locales (= 
		complete Heyting algebras), topoi and Heyting-valued models of IZF. \textbf{Section \ref{sec:descr}} 
		is devoted to present the equivalent descriptions of the categories of sheaves over a locale, in 
		particular establishing a connection between the cumulative construction of Heyting valued models and 
		localic topoi. \textbf{Section \ref{sec:induced}} contains the main results of this work: the 
		``lifting'' of all geometric morphisms $f^* :\Sh\H\to\Sh{\H'}$ to arrows $\tilde f:\V\H\to\V{\H'}$ 
		and the corresponding semantic preservation results. We end this work in \textbf{Section \ref{sec:final}} 
		presenting some remarks on possible further developments. 
  
  \section{Preliminaries}\label{sec:prelim}
        
        For the reader's convenience, we provide here the main definitions and results on topos and Heyting 
		valued models of set theory. Our main references for category theory are \cite{Lan98} and \cite{Bor08a}; 
		for topos theory \cite{Bor08c} and \cite{Bel88} and for Boolean and Heyting valued models \cite{Bel05}. 
        
        \subsection{Topos and Grothendieck Topos}
         	If \topspace{X} is a topological space, then the family of sets of continuous functions $({\cal C}(U, 
			\mathbb{R}))_{U \in \Open{X}}$ has the property that, for any open subset $U$ and any open covering 
			$U = \bigcup_{i \in I} V_i$ every family of continuous functions $(f_i \in {\cal C}(V_i, \mathbb{R}))_{i \in I}$ 
			that is {\em compatible} (${f_i}_{|V_i \cap V_j} = {f_j}_{|V_i \cap V_j}$, $\forall i,j \in I$), 
			has a unique {\em gluing} $f \in {\cal C}(U, \mathbb{R})$ ($f_{|V_i} = f_i$, $\forall i \in I$): This 
			holds since the property of being continuous is a local property; an analogous remark holds for the 
			${\cal C}^\infty$ functions if $X$ is a smooth manifold. Formally, this is captured by the following:
        
            \begin{definition}
                Let \topspace{X} be a topological space. Regarding the poset $(\Open{X}, \subseteq)$ as a category, 
				a presheaf on $X$ is a functor $F: \Open{X}^{op} \to \Set$. A sheaf on $X$ is a presheaf $F$ such 
				that, for every open $U \in \Open{X}$ and every open covering $\{U_i \in \Open{X} \mid i \in I\}$ 
				of $U$, the diagram below is an equalizer:

				\begin{center}
					\begin{tikzpicture}
						\matrix (m) [matrix of math nodes, row sep=3em, column sep=4em, nodes={text height=1.75ex, text depth=0.25ex}] {
							F(U) & \prod\limits_{i\in I} F(U_i) & \prod\limits_{\tuple{i, j}\in I\times I} F(U_i \cap U_j)\\
						};
						\path[->] (m-1-1) edge (m-1-2);

						\path[->] ($(m-1-2.east) + (0,0.1)$) edge ($(m-1-3.west) + (0,0.1)$);
						\path[->] ($(m-1-2.east) - (0,0.1)$) edge ($(m-1-3.west) - (0,0.1)$);
					\end{tikzpicture}
				\end{center}

                We denote the category of presheaves on $X$ by $\Psh{X}$ and the category of sheaves on $X$ 
				by $\Sh{X}$.
            \end{definition}

            Note that the definition of a sheaf depends only on the lattice of opens, therefore we may define 
			presheaves and sheaves for any {\em locale} $\tuple{\H, \leq}$, \emph{i.e.} a complete lattice 
			satisfying the following  distributive law:
			$$ a \wedge \bigvee_{i\in I} c_i = \bigvee_{i \in I} a\wedge c_i. $$
            Locales are precisely the complete Heyting algebras,  where 
            $$a \rightarrow b = \bigvee\{c \in \HH: a \wedge c \leq b\}.$$
            
            It is also possible to define sheaves in more general categories, using Grothendieck topologies.
            
            \begin{definition}
                Let $\mathcal{C}$ be a small category. A Grothendieck topology on $\mathcal{C}$ is a function 
				$J$ which assigns to each object $c \in \Obj{\mathcal{C}}$ a family $J(c)$ of sieves on $c$, 
				satisfying:
				
                \begin{enumerate}
                    \item 	Maximal sieve: $\bigcup\limits_{a \in \mathcal{C}_0} \mathcal{C}(a,c) \in J(c)$, 
							for all $c \in \Obj{\mathcal{C}}$;

                    \item 	Pullback stability: given $c \in \Obj{\mathcal{C}}$, for every $S \in J(c)$ and 
							every arrow $f: a \to c$, the pullback $f^\ast S$ of the sieve $S$ along $f$ is 
							an element of $J(a)$;

                    \item 	Transitivity: given $c \in \Obj{\mathcal{C}}$ and $S \in J(c)$, if $R$ is a sieve 
							on $c$ such that $f^\ast R$ is a sieve on $a$ for every $f: a \to c$ in $S$, then 
							$R \in J(c)$.
                \end{enumerate}

                We call the pair $(\mathcal{C},J)$ a (small) site.
            \end{definition}
            
            Every locale $(\HH, \leq)$ gives rise to a Grothendieck topology: if $c \in \HH$, then $J(c)$ is 
			the set of all coverings of $c$ that are downward closed. Another important example is the Zariski 
			topology in algebraic geometry.
            
            \begin{definition}
                Let $(\mathcal{C},J)$ be a site and $F: \mathcal{C}^{op} \to \Set$ a presheaf. We say $F$ is a 
				sheaf on $(\mathcal{C},J)$ if, for all $c \in \Obj{\mathcal{C}}$, every collection $\{f_i: a_i 
				\to c \mid i \in I\} \in J(c)$ and every $F$-compatible family $\{s_i\in F(a_i)\mid i\in I\}$, 
				there exists a unique $s \in F(c)$ such that $F(f_i)(s) = s_i$, for all $i \in I$. We denote 
				the category of sheaves on $(\mathcal{C},J)$ by $\Sh{\mathcal{C},J}$.
            \end{definition}
            
        
            A Grothendieck topos is a category that is equivalent to the topos of sheaves on a site. Apart from categories of sheaves, the category of presheaves is also an example of a Grothendieck topos.
            
            Some properties of Grothendieck topos are of particular interest for developing logic in the context of category theory, such as containing a subobject classifier and being Cartesian closed.
            
            \begin{definition}
                Given a category $\mathcal{C}$ with pullbacks, a subobject classifier in $\mathcal{C}$ is a mono $\top: u \inj \Omega$ satisfying: for every other mono $m: a \inj b$, there exists a unique $\chi_m: b \to \Omega$ such that the following diagram is a pullback:
                \begin{diagram}
                    a & \rEmbed^m & b\\
                    \dTo & & \dTo>{\chi_m}\\
                    u & \rEmbed^\top & \Omega
                \end{diagram}
            \end{definition}
            
            \begin{definition}
                Let $\mathcal{C}$ be a locally small category with binary products. The category $\mathcal{C}$ is called Cartesian closed if, for every $b \in \Obj{\mathcal{C}}$, the product functor $- \times b$ has a right adjoint (the exponentiation functor $(-)^b$).
            \end{definition}
            
            Both these properties are used to define a more general notion of topos: elementary topos.

        \subsection{Localic Topos}
        
            \begin{definition}
            A topos is said to be localic if it is equivalent to the topos of sheaves on a locale.
            \end{definition}
            
            \begin{theorem}
            For a Grothendieck topos $\mathcal{T}$, the following conditions are equivalent:
            \begin{enumerate}
                \item $\mathcal{T}$ is a localic topos;
                \item the subobjects of the terminal object constitute a family of generators of $\mathcal{T}$.
            \end{enumerate}
            \end{theorem}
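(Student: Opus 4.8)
The plan is to prove the two implications separately; the forward direction is a short computation inside $\Sh\H$, while the converse is the substantive one.

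\textbf{$(1)\Rightarrow(2)$.} Since ``being a family of generators'' is invariant under equivalence of categories, I may assume $\mathcal{T}=\Sh\H$ for some locale $\H$. The terminal object of $\Sh\H$ is the representable sheaf $\mathbf{y}(\top)$ (the constant presheaf at a singleton, which is trivially a sheaf). A direct check shows that a subpresheaf $S\inj\mathbf{y}(\top)$ is a subsheaf precisely when $A_S:=\set{c\in\H : S(c)\neq\varnothing}$ is a downset containing the join of every one of its subfamilies, i.e.\ $A_S=\mathord{\downarrow} a$ with $a=\bigvee A_S$; hence $\operatorname{Sub}_{\Sh\H}(1)\cong\H$, the subobject corresponding to $a$ being $\mathbf{y}(a)\inj\mathbf{y}(\top)$. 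In particular every representable sheaf is (isomorphic to) a subobject of $1$, so it is enough to recall that the representables separate parallel pairs of arrows in $\Sh\H$: if $f\neq g\colon F\rar F'$ then they differ on some section $s\in F(c)$, and by the Yoneda correspondence — valid in $\Sh\H$ because each $\mathbf{y}(c)$ is a sheaf — $s$ is a morphism $\mathbf{y}(c)\rar F$ with $f\circ s\neq g\circ s$. Thus the subobjects of $1$ form a family of generators.

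\textbf{$(2)\Rightarrow(1)$.} Set $\H:=\operatorname{Sub}_{\mathcal{T}}(1)$. Since a Grothendieck topos is well-powered and complete, $\H$ is a small complete lattice; since $\operatorname{Sub}(X)$ is a Heyting algebra in any topos, $\H$ is a complete Heyting algebra, i.e.\ a locale. Moreover the joins in $\H$ are images of coproducts and are stable under pullback, because colimits in a Grothendieck topos are universal. Now $\H$, equipped with its canonical topology, is a small full generating subcategory of $\mathcal{T}$, so $\mathcal{T}\simeq\Sh{\H,J_{\mathrm{can}}}$ by the Comparison Lemma together with Giraud's theorem; and since the canonical topology on a locale is exactly the localic one — $\set{a_i\le a}$ covers $a$ iff $\bigvee_i a_i=a$, which is the notion of covering used to define sheaves on a locale above — this says $\mathcal{T}\simeq\Sh\H$, so $\mathcal{T}$ is localic.

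If one prefers to avoid the Comparison Lemma, the equivalence can be exhibited by hand: define $\Phi\colon\mathcal{T}\rar\Sh\H$ by $\Phi(X)(a):=\mathcal{T}(U_a,X)$, where $U_a\inj 1$ is the subobject named by $a$, with restriction along $U_b\inj U_a$ for $b\le a$. That $\Phi(X)$ is a sheaf uses universality of colimits: for $a=\bigvee_i a_i$ the subobject $U_a$ is the colimit of the diagram of the $U_{a_i}$ and the $U_{a_i\wedge a_j}$, and $\mathcal{T}(-,X)$ carries this colimit to the equalizer demanded by the sheaf condition. Faithfulness of $\Phi$ is precisely the generating hypothesis; for the rest one constructs the left adjoint $L\dashv\Phi$ (it exists by the adjoint functor theorem, $\mathcal{T}$ being locally presentable and $\Phi$ continuous) as the colimit-preserving extension of $a\mapsto U_a$, and checks that the unit $\eta\colon\operatorname{id}\RAR\Phi L$ is invertible: on a representable, $\Phi L\,\mathbf{y}(a)=\Phi(U_a)$ has $\Phi(U_a)(b)=\mathcal{T}(U_b,U_a)$, a singleton if $b\le a$ and empty otherwise, i.e.\ $\mathbf{y}(a)(b)$; density of representables and cocontinuity of $L$ then propagate this to all sheaves, whence, using the generating hypothesis for the counit, $\Phi$ is an equivalence.

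I expect the reverse implication to be the main obstacle, and within it the passage from ``$\H=\operatorname{Sub}(1)$ generates $\mathcal{T}$'' to an actual equivalence $\mathcal{T}\simeq\Sh\H$: whether packaged as the Comparison Lemma or unwound into the adjunction $L\dashv\Phi$ with invertible unit and counit, this is the step that forces one to combine universality of colimits in a Grothendieck topos, the density of representables, and the generating hypothesis. By contrast, checking that $\operatorname{Sub}(1)$ is a locale and that $\Phi(X)$ is a sheaf are routine.
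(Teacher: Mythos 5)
The paper states this theorem without proof, as background imported from the standard literature (it is essentially Borceux's characterization of localic toposes), so there is no in-paper argument to compare against; judged on its own, your proof is correct and follows the standard route. Your forward direction is fine: the computation $\operatorname{Sub}_{\Sh{\H}}(1)\cong\H$ with the subobject of $1$ named by $a$ being $\mathbf{y}(a)$, plus the Yoneda argument that the representables (which on a locale are already sheaves) separate parallel pairs, is exactly what is needed. Your converse is also the standard one, and invoking the Comparison Lemma is legitimate; the only place where you are compressing something nontrivial is the identification of the induced topology on $\operatorname{Sub}_{\mathcal T}(1)$ with the localic one (you need that a family $\set{U_{a_i}\inj U_a}$ is jointly epic in $\mathcal T$ iff $\bigvee_i a_i = a$ in $\operatorname{Sub}(1)$, which uses that joins of subobjects are effective unions and that $\operatorname{Sub}(U_a)\cong\mathord{\downarrow}a$ for subterminal $U_a$) and, in the by-hand version, the step from ``the $U_a$ are a generating family'' to ``$\Phi$ is full'': mere joint faithfulness of the $\mathcal T(U_a,-)$ does not give fullness in an arbitrary category, and you need the standard Grothendieck-topos fact that a generating set is automatically dense (every $X$ is the colimit of the canonical diagram of generators over it, via effective epimorphisms and their kernel pairs). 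Both points are routine in a Grothendieck topos but are exactly where the generating hypothesis and Giraud's exactness conditions actually get used, so they deserve a sentence each rather than the phrase ``density of representables'', which as written conflates the representables of $\Sh{\H}$ with the subterminals of $\mathcal T$ before the equivalence has been established.
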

            
            \begin{theorem}
            For a Grothendieck topos $\mathcal{T}$, the following conditions are equivalent:
            \begin{enumerate}
                \item $\mathcal{T}$ is a localic and Boolean topos;
                \item $\mathcal{T}$ satisfies the axiom of choice;
                \item $\mathcal{T} \simeq \Sh{\B}$, for some Boolean algebra $\B$.
            \end{enumerate}
            \end{theorem}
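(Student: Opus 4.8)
The plan is to prove the cycle $(1)\Rightarrow(3)\Rightarrow(2)\Rightarrow(1)$, reading ``$\mathcal{T}$ is localic'' as its definition, namely that $\mathcal{T}\simeq\Sh{\H}$ for some locale $\H$.

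$(1)\Rightarrow(3)$: I would start from the standard identification, for any locale $\H$, of the frame of subobjects of the terminal object of $\Sh{\H}$ with $\H$ itself, i.e. $\mathrm{Sub}_{\Sh{\H}}(1)\cong\H$ as frames (hence as Heyting algebras); concretely, a subobject of the terminal sheaf is determined by the closed sieve it selects at each stage, and the closure of a sieve $S$ on $c$ is the principal ideal ${\downarrow}\bigvee S$, so the closed sieves on $c$ are exactly the $a\le c$. If $\mathcal{T}$, hence $\Sh{\H}$, is Boolean, then every subobject lattice in it is a Boolean algebra; in particular $\H\cong\mathrm{Sub}_{\Sh{\H}}(1)$ is a Boolean algebra, and being a locale it is a complete Boolean algebra, so one may take $\B:=\H$. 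The converse fact, that $\Sh{\B}$ is Boolean whenever $\B$ is a complete Boolean algebra, follows from the same description: negation in $\Omega$ at stage $c$ sends $a\le c$ to $(\neg_\B a)\wedge c$, so by Kripke--Joyal semantics $\Sh{\B}\models\forall p\,(p\vee\neg p)$ holds iff $a\vee\neg_\B a=1_\B$ for every $a$, i.e. iff $\B$ is Boolean.

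$(3)\Rightarrow(2)$: Here I would use the equivalence $\Seth{\B}\simeq\Sh{\B}$ (classical, cf. \cite{Bel05}, and made functorial in Section~\ref{sec:descr}), under which the internal higher-order logic of the topos matches the $\B$-valued semantics $\bvalue{\ }_\B$ on $\V\B$. Since $\B$ is a complete Boolean algebra, $\tuple{\V\B,\bvalue{\ }_\B}$ models $\mathrm{ZFC}$ --- every axiom receives value $1_\B$ --- in particular the axiom of choice; transporting this along the equivalence yields the internal axiom of choice in $\mathcal{T}\simeq\Sh{\B}$. (Equivalently and more directly: an epimorphism in $\Sh{\B}$ restricts, along a cover of $1$, to a surjection of $\B$-valued sets, to which the maximum principle --- available precisely because $\B$ is a complete Boolean algebra --- assigns a right inverse of value $1_\B$, and these patch to the section required by the internal axiom of choice.)

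$(2)\Rightarrow(1)$: That $\mathcal{T}$ is Boolean is Diaconescu's theorem: the internal axiom of choice implies the law of excluded middle in any elementary topos. The remaining point --- and the one I expect to be the real obstacle --- is that $\mathcal{T}$ is \emph{localic}, i.e. that the subterminal objects form a separating family. I would approach this by factoring the unique geometric morphism $\mathcal{T}\to\Set$ through its localic reflection $\Sh{\mathrm{Sub}_{\mathcal{T}}(1)}$ by means of a hyperconnected morphism, so that ``$\mathcal{T}$ localic'' becomes ``$\mathcal{T}$ has no nontrivial hyperconnected quotient'', and then showing that the internal axiom of choice excludes such quotients --- the obstruction being exactly the internal-symmetry phenomenon exhibited by toposes such as that of $G$-sets for a nontrivial group $G$, which do fail the axiom of choice. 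As the full argument for this last implication is classical but somewhat delicate, in a write-up I would either carry it out along these lines or cite it from the topos-theory literature (e.g. \cite{Bel88}, \cite{Bor08c}).
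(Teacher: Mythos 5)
The paper itself gives no proof of this theorem: it is stated as a standard preliminary fact (to be found in \cite{Bel88}, \cite{Bor08c}), so there is no argument of the authors' to compare yours against. Judged on its own terms, your cycle $(1)\Rightarrow(3)\Rightarrow(2)\Rightarrow(1)$ is the right shape, and the first two legs are sound: the identification $\mathrm{Sub}_{\Sh{\H}}(1)\cong\H$ via closed sieves correctly shows that a Boolean localic topos is $\Sh{\B}$ for $\B$ a \emph{complete} Boolean algebra (note the statement's ``Boolean algebra'' must be read as ``complete Boolean algebra'' for $\Sh{\B}$ to make sense and for $(3)\Rightarrow(1)$ to hold), and the derivation of the splitting of epimorphisms in $\Sh{\B}$ from the maximum principle for $\V{\B}$ is the standard route, matching the $\Seth{\B}\simeq\Sh{\B}$ dictionary the paper sets up in Section \ref{sec:descr}.

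The one genuine gap is the half of $(2)\Rightarrow(1)$ you yourself flag: that a Grothendieck topos in which every epimorphism splits is localic. Routing this through the hyperconnected--localic factorization and an analysis of hyperconnected quotients is a detour, and you do not actually carry it out. There is a short direct argument that plugs straight into the preceding theorem of the paper (localic $\iff$ the subterminals generate): by Diaconescu the topos is Boolean; given $f\neq g:X\to Y$, let $E\rightarrowtail X$ be their equalizer and $E'$ its (nonzero) complement; the support map $E'\twoheadrightarrow\sigma(E')$ is an epimorphism onto a nonzero subterminal, so AC gives a section $\sigma(E')\to E'\rightarrowtail X$, which separates $f$ from $g$ since it factors through $E'$ and $E\wedge E'=0$. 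This closes the cycle without appeal to the factorization theory, and I would substitute it for the deferred step.
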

            
            A continuous function between topological spaces defines a $\point{\wedge, \bigvee}$-preserving morphism (\emph{i.e.}, the left adjoint of a morphism of locales) between the locales of open sets, and a geometric morphism between the corresponding sheaf topos:
            \begin{center}
                \begin{tabular}{c c c c c}
                    \begin{diagram}
                        X \\
                        \dTo>f \\
                        X'
                    \end{diagram}& \(\qquad \mapsto \qquad\) & 
                    \begin{diagram}
                        \Open{X}\\
                        \uTo>{f^{-1}}\\
                        \Open{X'}
                    \end{diagram}& \(\qquad \mapsto \qquad\) & 
                    \begin{diagram}
                        \Sh{\Open{X}}\\
                        \dTo<{\varphi_\ast} \uTo>{\varphi^\ast}\\
                        \Sh{\Open{X'}}
                    \end{diagram}
                \end{tabular}
            \end{center}
            That is, $(\varphi_\ast,\varphi^\ast)$ is a pair of functors such that $\varphi^\ast \dashv \varphi_\ast$ and $\varphi^\ast$ preserves finite limits.
            
            This mapping from the category of topological spaces to the category of topos and geometric morphisms is not full nor faithful. However, the mapping from the category of locales to the category of topos and geometric morphisms is fully faithful:
            
            \begin{theorem}
            The mapping below, given by $\varphi_\ast(F) = F \circ f^\ast$, for every sheaf $F$ on $\H$,
            \begin{center}
                \begin{tabular}{c c c}
                    \begin{diagram}
                        \H\\
                        \dTo<{f_\ast} \uTo>{f^\ast \quad (\wedge,\bigvee)}\\
                        \H'
                    \end{diagram} & \(\qquad \mapsto \qquad \) &
                    \begin{diagram}
                        \Sh{\H}\\
                        \dTo<{\varphi_\ast} \uTo>{\varphi^\ast}\\
                        \Sh{\H'}
                    \end{diagram}
                \end{tabular}
            \end{center}
            defines a fully faithful functor $\mathbf{Sh}: \mathbf{Loc} \to \mathbf{Topos}_{geo}$.
            \end{theorem}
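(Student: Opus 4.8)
The plan is to produce, for any locales $\H,\H'$, a two-sided inverse on hom-sets to the assignment $f\mapsto\mathbf{Sh}(f)$, built from the frame of subterminal objects. The basic observation is that for every locale $\H$ the poset $\mathrm{Sub}_{\Sh\H}(\mathbf 1)$ of subobjects of the terminal sheaf is canonically isomorphic, as a frame, to $\H$ itself: the terminal sheaf is $a\mapsto\{\ast\}$; a subsheaf of it assigns to each $a\in\H$ a subset of $\{\ast\}$; and the set of $a$'s receiving $\{\ast\}$ is a down-set closed under the suprema of its members, hence a principal down-set ${\downarrow}c$. The assignment $c\mapsto{\downarrow}c=\mathbf y(c)$ (the representable presheaf, which over a locale is automatically a sheaf) is the required frame isomorphism $\H\cong\mathrm{Sub}_{\Sh\H}(\mathbf 1)$, as it readily preserves finite meets and arbitrary joins. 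I would first dispose of the well-definedness of $\mathbf{Sh}$ itself — that $\varphi_\ast(F)=F\circ f^\ast$ is a sheaf, that it has a left adjoint $\varphi^\ast$ preserving finite limits (so that $(\varphi^\ast,\varphi_\ast)$ is genuinely a geometric morphism), and that identities and composites are respected — as the routine ``a morphism of sites induces a geometric morphism'' verification, using that a frame homomorphism $f^\ast:\H'\to\H$ is flat (it preserves finite meets) and sends covers to covers.

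For faithfulness, given any geometric morphism $g=(g^\ast,g_\ast):\Sh\H\to\Sh{\H'}$, its inverse image $g^\ast$ preserves finite limits and all colimits, hence sends monos into $\mathbf 1$ to monos into $\mathbf 1$ and preserves their finite meets and arbitrary joins; composing with the frame isomorphisms above produces a frame homomorphism $\H'\to\H$, i.e.\ a locale morphism $\Phi(g):\H\to\H'$, giving a well-defined map on hom-sets. It then suffices to check that $\Phi(\mathbf{Sh}(f))=f$ for every $f:\H\to\H'$, i.e.\ to compute $\varphi^\ast$ on subterminals. Writing $T_a=\mathbf y(a)$ and $T'_{a'}=\mathbf y(a')$ for the subterminals of $\Sh\H$ and $\Sh{\H'}$, the adjunction $\varphi^\ast\dashv\varphi_\ast$ and the Yoneda lemma give, naturally in the sheaf $G$ on $\H$,
\[ \mathrm{Hom}_{\Sh\H}\!\bigl(\varphi^\ast T'_{a'},\,G\bigr)\;\cong\;\mathrm{Hom}_{\Sh{\H'}}\!\bigl(T'_{a'},\,G\circ f^\ast\bigr)\;\cong\;(G\circ f^\ast)(a')\;=\;G\bigl(f^\ast(a')\bigr)\;\cong\;\mathrm{Hom}_{\Sh\H}\!\bigl(T_{f^\ast(a')},\,G\bigr), \]
whence $\varphi^\ast T'_{a'}\cong T_{f^\ast(a')}$; thus $\varphi^\ast$ restricts on subterminals to $f^\ast$, that is, $\Phi(\mathbf{Sh}(f))=f$. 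In particular $\mathbf{Sh}$ is injective on hom-sets.

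For fullness, fix $g:\Sh\H\to\Sh{\H'}$ and set $f:=\Phi(g)$. By construction of $\Phi$ there is a natural isomorphism $g^\ast T'_{a'}\cong T_{f^\ast(a')}$, and by the previous paragraph $\varphi^\ast T'_{a'}\cong T_{f^\ast(a')}$ too, so $g^\ast$ and $\varphi^\ast=\mathbf{Sh}(f)^\ast$ agree, through a natural isomorphism $\Theta$, on the full subcategory of subterminal objects of $\Sh{\H'}$. Now every sheaf $E$ on $\H'$ is the colimit, computed in $\Sh{\H'}$, of the canonical diagram of subterminals lying over it: in $\Psh{\H'}$ this is the density (co-Yoneda) theorem, and since over a locale the representables are already sheaves and sheafification preserves colimits, the same colimit formula holds in $\Sh{\H'}$ — this is also exactly the content, recalled above, that the subterminal objects form a generating family of the localic topos $\Sh{\H'}$. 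Applying the colimit-preserving functors $g^\ast$ and $\varphi^\ast$ to this presentation and transporting along $\Theta$ gives $g^\ast E\cong\varphi^\ast E$ naturally in $E$, hence $g^\ast\cong\varphi^\ast$; and since a geometric morphism is determined up to canonical isomorphism by its inverse image part, $g\cong\mathbf{Sh}(f)$. Reading $\mathbf{Topos}_{geo}$ with isomorphism classes of geometric morphisms as its morphisms (equivalently, reading the statement $2$-categorically), this is fullness.

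I expect the fullness step to be the main obstacle, and the delicate point inside it is upgrading ``$g^\ast$ and $\varphi^\ast$ agree on subterminals'' to ``they agree on all sheaves'': this requires the subterminals to be not merely jointly faithful but a \emph{dense} generator — every sheaf being the canonical colimit of the subterminals over it — which is where the recalled characterization of localic topoi does real work, and it requires the isomorphism $\Theta$ on subterminals to be natural enough to survive transport through that colimit. A slicker route that settles both halves at once is Diaconescu's theorem: geometric morphisms $\mathcal E\to\Sh{\H'}$ correspond to flat cover-preserving functors $\H'\to\mathcal E$, and for the poset-site $\H'$ these are precisely the frame homomorphisms $\H'\to\mathrm{Sub}_{\mathcal E}(\mathbf 1)$; taking $\mathcal E=\Sh\H$ and using $\mathrm{Sub}_{\Sh\H}(\mathbf 1)\cong\H$ identifies $\mathbf{Topos}_{geo}(\Sh\H,\Sh{\H'})$ with $\mathbf{Frame}(\H',\H)=\mathbf{Loc}(\H,\H')$, inversely to $\mathbf{Sh}$. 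Since Diaconescu's theorem is not among the results recalled in the excerpt, I would carry out the hands-on argument above and relegate this to a remark.
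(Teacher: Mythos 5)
The paper states this theorem in its preliminaries \emph{without proof}, as recalled background (it is the classical full embedding of $\mathbf{Loc}$ into localic toposes, cf.\ Mac Lane--Moerdijk IX.5 or Johnstone), so there is no in-paper argument to compare yours against; your job is therefore to supply the standard proof, and you do so correctly. The three ingredients you use --- the frame isomorphism $\H \cong \mathrm{Sub}_{\Sh{\H}}(\mathbf 1)$ via $c \mapsto \mathbf{y}(c)$, the adjunction-plus-Yoneda computation $\varphi^\ast\mathbf{y}(a') \cong \mathbf{y}(f^\ast(a'))$ for faithfulness, and the density of the subterminals for fullness --- are exactly the classical ones. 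The two delicate points you flag are genuine but both close in the expected way: (i) the upgrade from ``$g^\ast$ and $\varphi^\ast$ agree on subterminals'' to ``they agree on all sheaves'' is cleanest if you note that a cocontinuous functor out of $\Sh{\H'}$ preserving the canonical colimit presentations is isomorphic to the left Kan extension of its restriction to the representables, and left Kan extension is functorial in that restriction, so your natural isomorphism $\Theta$ (which on a poset of subterminals is just compatible monotonicity) propagates naturally through the colimits; (ii) full faithfulness can indeed only hold with $\mathbf{Topos}_{geo}(\Sh{\H},\Sh{\H'})$ read as isomorphism classes of geometric morphisms (equivalently $2$-categorically), and the paper is silently adopting that convention. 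Your closing observation that Diaconescu's theorem settles both halves at once is also correct, and since the paper never invokes it, relegating it to a remark and carrying out the hands-on argument is the right editorial choice.
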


        
        \subsection{Locale-Valued Models}
            
            
            \begin{definition}{Locale-Valued Model}
            
                We define, for a locale $\H$, the universe of $\H$-names by ordinal recursion. Given an ordinal $\alpha$ let
                
                $$ \V{\H}_\alpha = \set{f\in \H^X \mid \exists\beta<\alpha, X\subseteq\V{\H}_\beta 
                } $$
                
                It is readily seen that $\V{\H}_{\alpha}\subset\V{\H}_{\alpha+1}$ and that for limit ordinals it is simply the union of the earlier stages. So we let the (proper class) $\V{\H}$ be defined as:
                
                $$ \V\H = \bigcup_{\alpha \in \ord} \V\H_\alpha $$
                
                Furthermore, we define for elements of this universe $\V\H$ a function $\rank x$ defined as:
                
                $$ \rank x = \min \set{\alpha \in\ord \mid x\in\V\H_\alpha } $$
                
                Which is trivially well-founded.
            \end{definition}
            
            \begin{definition}{Atomic Formulas' Values}
                
                We endow this class with two\footnote{Technically three.} binary functions on $\H$, namely $\bvalue{\cdot\in\cdot}$ and $\bvalue{\cdot = \cdot}$ defined by simultaneous recursion on a well-founded relation.
                
                Given a locale $\H$
                define first:
                
                $$\tuple{x,y}\prec\tuple{u,v}\iff (x=u\land y\in\dom v) \lor (x\in\dom u \land y=v)$$
                
                We will later see that this is a well-founded relation on $\V\H\times\V\H$. By recursion on $\prec$, define for pairs of elements in $\V\H$, the following:
                
                \begin{align*}
                    \bvalue{\cdot\in\cdot} : \V\H&\times\V\H\longrightarrow \H\\
                    \langle x&, y\rangle\longmapsto \bigvee_{u\in\dom y} y(u)\wedge \bvalue{x=u}
                \end{align*}
                
                \begin{align*}
                    \bvalue{\cdot\ni\cdot} : \V\H&\times\V\H\longrightarrow \H\\
                    \langle x&, y\rangle\longmapsto \bigvee_{v\in\dom x} x(v)\wedge \bvalue{v=y}
                \end{align*}
                
                \begin{align*}
                    \bvalue{\cdot = \cdot} : \V\H&\times\V\H\longrightarrow \H\\
                    \langle x&, y\rangle\longmapsto \bigwedge_{\substack{u\in\dom y\\ v\in\dom x}} \left(y(u)\rar \bvalue{x\ni u}\right)\wedge\left(x(v)\rar\bvalue{v\in y}\right)
                \end{align*}
                
                We call these the $\H$-values of the membership, co-membership and, equality, respectively.
                Where ambiguity may arise, we make a distinction between valuations of different locale-valued models by subscripting the 
                relevant locale on $\bvalue{\ }$ or by simply placing a prime symbol on one of the otherwise ambiguous evaluation function: “$\bvalue{\ }'$”.
            \end{definition}
            
            \begin{prop} {The relation $\prec$ is  well-founded.} 
            \end{prop}
                
                \begin{proof}
                
                Firstly, define $\rankk{x}{y}=\min\set{\rank{x},\rank{y}}$. Take any subclass $X$ of $\V\H\times\V\H$ and consider its image under $\varrho'$. For one, it has a minimum, due to the well-orderedness of $\ord$, let us call this minimum value $\alpha$ and one pair in $X$ such that it attains value $\alpha$ we name $\tuple{x,y}$. Without loss of generality, we assume $\rank x = \alpha$.
                
                Suppose now there is some $\tuple{f,g}$ such that $\tuple{f,g}\prec\tuple{x,y}$. By definition:
                
    			$$ (f\in\dom x\land g=y)\lor(f=x\land g\in\dom y) $$
    
                Breaking the disjunction we realize that the value of $\tuple{f,g}$ under $\varrho'$ must be no more than $\alpha$:
    
    			\begin{prooftree}
    				\AxiomC{$f\in\dom x\land g=y$}
    				\UnaryInfC{$\rankk{f}{g}\leq\rankk{x}{y}$}
    
    				\AxiomC{$f=x\land g\in\dom y$}
    				\UnaryInfC{$\rankk{f}{g}\leq\rankk{x}{y}$}
    
    				\BinaryInfC{$\rankk{f}{g}\leq\rankk{x}{y}$}
    			\end{prooftree}
    
                But since $\alpha$ is minimal, it follows that they must be the same. Therefore, $f$ can't be in the domain of $x$, for then it would have a smaller rank than $\alpha$. One is forced to conclude that, under this assumption, $\tuple{f,g}\prec\tuple{x,y}\rar g\in\dom y$, and since it is necessary that $f=x$. Consequently, because the relation $(\cdot\in\dom\cdot)$ is well-founded, one is forced to concede that if a descending $\prec$-chain does not stabilize, so too will a descending $(\cdot\in\dom\cdot)$-chain, a contradiction.
                        \end{proof}
            
            We are thus entitled to make the definition of those $\H$-values as we previously claimed. The definitions of $\H$-values are then extended to the class of the language of $\LZF$-formulas enriched with constant symbols for each member of the class $\V\H$:
            
            \begin{definition}{$\H$-valuation of Formulas}
            
                We define the value of a $\LZFe$-formula $\varphi$ which is the language $\LZF$ extended by constant symbols for each element of $\V\H$ the valuation $\bvalue{\varphi}$ inductively on the complexity of $\varphi$. 
                
                $$ \bvalue{\varphi}:(\vars\to\V\H)\to\H $$
                
                For an atomic formula $\varphi$ involving only constants symbols as terms, $\bvalue{\varphi}$ is simply the value of the corresponding function defined earlier, that is if $\varphi\equiv aRb$ then, its valuation is the constant function:
                
                $$ \bvalue{\varphi} = \bvalue{aRb} $$
                
                For atomic formulas with free variables, the valuation is a function of the language's variable symbols. Given a function that assigns values to its free variables, it yields the value of the corresponding valuation, \emph{i.e.} if $\varphi\equiv x_i R x_j$ for variable symbols $x_i$, $x_j$.
                
                \begin{align*}
                    \bvalue{\varphi} : \left(\vars\to\V\H\right)&\to\H\\
                    v&\mapsto\bvalue{v(x_i) R v(x_j)}
                \end{align*}
                
                If $\varphi$ has mixed constants and variable symbols, the definition is analogous but fixing the constants.
                
                When $\varphi$ has free variables, we often write $\bvalue{\varphi(x,y\cdots)}$ to denote the function on its free variables, rather than writing $\bvalue{\varphi}(\cdots)$.
                
                For complex formulas, we define, for negation:
                
                $$ \bvalue{\neg\varphi} = \bvalue{\varphi}\rar\bot$$
                
                For a binary connective $*$ among $\rar$, $\wedge$, $\vee$:
                
                $$ \bvalue{\varphi * \psi} = \bvalue{\varphi} * \bvalue{\psi}$$
                
                Define now, given $x_i$ a variable symbol and $x\in\V\H$ and a function $v:\vars\to\V\H$, the function $$v_{[x_i\mid x]}(s) =
                    \begin{cases}
                        v(s)\text{, if }s\not=x_i\\
                        x\text{, otherwise}
                    \end{cases}$$
                
                For quantifiers:
                
                \begin{align*}
                    \bvalue{\forall x_i: \varphi} : \left(\vars\to \V\H\right)&\to\H\\
                    v&\mapsto\bigwedge_{x\in\V\H}\bvalue{\varphi}\left(v_{[x_i \mid x]}\right)
                \end{align*}
                
                And dualy,
                
                \begin{align*}
                    \bvalue{\exists x_i: \varphi} : \left(\vars\to \V\H\right)&\to\H\\
                    v&\mapsto\bigvee_{x\in\V\H}\bvalue{\varphi}\left(v_{[x_i \mid x]}\right)
                \end{align*}
                
                If two functions $f,g:\vars\to\V\H$ coincide on $\free_\varphi$, then $\bvalue{\varphi}(f) = \bvalue{\varphi}(g)$, then the sentences are constant functions --- and we will often omit the fact that these valuations are indeed functions of the values we assign to the variable symbols of the formulas and concern ourselves with sentences, which correspond to values in $\H$. 
                
                We won't distinguish between the constant symbols corresponding to elements of $\V\H$ and \emph{the} elements they correspond to.
            \end{definition}
            
            We simply state the following results without proof --- as they can be straightforwardly adapted from those on \cite{Bel05}.
            
            
            \begin{theorem}{Properties of Formula valuation}
    	        \begin{enumerate}
    		        \item $\bvalue{x  =  x} = 1$.																	
                    \item $\forall x\in \dom y: y(x) 			   		\leq \bvalue{x \in y}$. 					
                    \item $\bvalue{x  =  y} = \bvalue{y = x}$.														
        			\item $\bvalue{x \in y} = \bvalue{y \ni x}$.													
                    \item $\bvalue{x  =  y} \wedge \bvalue{y = z} 		\leq \bvalue{x  =  z}$.						
                    \item $\bvalue{x  =  y} \wedge \bvalue{y \in z} 		\leq \bvalue{x \in z}$. 					
                    \item $\bvalue{x \in y} \wedge \bvalue{y = z}		\leq \bvalue{x \in z}$. 					
                    \item $\bvalue{x  =  y} \wedge  x(u)      	   		\leq \bvalue{u \in y}$. 					
        			\item $\bvalue{x  =  y} \wedge \bvalue{\varphi(x)}     =  \bvalue{y  =  x}\wedge \bvalue{\varphi(y)}$.
        			\item $\bvalue{\exists u\in x:\varphi(u)} = \bigvee_{  u\in\dom x}x(u)\wedge\bvalue{\varphi(u)}$	
        			\item $\bvalue{\forall u\in x:\varphi(u)} = \bigwedge_{u\in\dom x}x(u)\rar \bvalue{\varphi(u)}$	
        		\end{enumerate}
        		
        		Where $Q u\in x: \varphi(u)$ is the usual shorthand for either $[\forall u:u\in x\rar \varphi(u)]$ or $[\exists u:u\in x\land \varphi(u)]$ for $Q$ standing for “\,$\forall$” or “\,$\exists$”.
            \end{theorem}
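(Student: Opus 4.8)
The plan is to follow the classical development of \cite{Bel05}, establishing the eleven clauses not in the stated order but in successive \emph{waves}, each a single transfinite induction that may freely invoke the conclusions of the earlier waves. For the atomic clauses (1)--(8) the induction variable is an ordinal attached to the names: the rank $\rank{x}$ for a single name, and for tuples the \emph{natural (Hessenberg) sum} of the ranks --- it is the commutativity and strict monotonicity in each coordinate of this sum that will force the measure to drop in all of the mutually recursive subcases below (ordinary ordinal addition would not suffice for the triples in Wave~3). The last three clauses (9)--(11) are then obtained by a separate induction on the complexity of $\varphi$, fed by (5)--(8).

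\emph{Waves 1 and 2.} Clauses (1)--(2) are proved by induction on $\rank{x}$: in $\bvalue{x=x}$ every conjunct has the form $x(u)\rar\bvalue{x\ni u}$ or $x(v)\rar\bvalue{v\in x}$ with $u,v\in\dom{x}$, and from the defining suprema $\bvalue{x\ni u}\ge x(u)\wedge\bvalue{u=u}$ and $\bvalue{v\in x}\ge x(v)\wedge\bvalue{v=v}$, so the inductive hypothesis makes every conjunct equal to $1_\H$; clause (2) is then the immediate $\bvalue{x\in y}\ge y(x)\wedge\bvalue{x=x}=y(x)$. Clauses (3)--(4) are proved by induction on $\rank{x}\oplus\rank{y}$: comparing the defining suprema, $\bvalue{x\in y}=\bvalue{y\ni x}$ reduces to instances $\bvalue{x=u}=\bvalue{u=x}$ of clause (3) with $u\in\dom{y}$, at strictly smaller natural sum; comparing the defining infima, $\bvalue{x=y}=\bvalue{y=x}$ reduces conjunct by conjunct to instances of clause (4), again at strictly smaller natural sum.

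\emph{Wave 3 --- the main obstacle.} Clauses (5), (6), (7) are proved \emph{simultaneously} by induction on $\rank{x}\oplus\rank{y}\oplus\rank{z}$; this is where the real work lies. Each of the three is treated by unfolding the outermost defining meet or join of its conclusion and pushing the hypotheses inside using Heyting-algebra manipulations (distributivity over arbitrary joins, and $a\wedge(a\rar b)\le b$): for (7) one uses the conjunct $y(u)\rar\bvalue{u\in z}$ of $\bvalue{y=z}$ followed by clause (6) at the triple $\tuple{x,u,z}$ with $u\in\dom{y}$; for (6) one uses clause (5) at $\tuple{x,y,u}$ with $u\in\dom{z}$ together with the bare definition of $\bvalue{x\in z}$; for (5) one splits $\bvalue{x=z}$ into its two families of conjuncts and invokes clause (7) at $\tuple{u,y,x}$ with $u\in\dom{z}$ (plus clauses (3)--(4) to turn $\ni$ into $\in$) for one family and clause (7) at $\tuple{v,y,z}$ with $v\in\dom{x}$ for the other. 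In each subcase exactly one coordinate of the triple gets replaced by a member of the domain of another coordinate, so the natural sum strictly decreases; the delicate point is to route the argument so that this actually happens --- for instance in the second family of (5) one must go through $\tuple{v,y,z}$ rather than $\tuple{v,x,y}$, or else the measure fails to drop. Clause (8) is then immediate from (2) and (7): $\bvalue{x=y}\wedge x(u)\le\bvalue{x=y}\wedge\bvalue{u\in x}\le\bvalue{u\in y}$.

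\emph{Wave 4.} For (9) it suffices, by clause (3) and symmetry, to prove $\bvalue{x=y}\wedge\bvalue{\varphi(x)}\le\bvalue{\varphi(y)}$ for every formula $\varphi$, by induction on the complexity of $\varphi$: the atomic cases are precisely clauses (5)--(8) (using (4) to absorb $\ni$, and a two-step argument when the substituted variable occupies both argument places); the connective cases are routine distributions, those for $\neg$ and $\rar$ relying on the already-available symmetric form of the inductive hypothesis; and the quantifier cases pass termwise through the supremum or infimum after renaming the bound variable away from $x$ and $y$. Finally (10) and (11) follow by unwinding the shorthand $Qu\in x:\varphi(u)$ into the quantifier clauses and then using (2), (4) and (9): for (10), the inequality $\ge$ comes from $x(u)\le\bvalue{u\in x}$, and $\le$ from $\bvalue{w\in x}\wedge\bvalue{\varphi(w)}=\bigvee_{u\in\dom{x}}\bigl(x(u)\wedge\bvalue{w=u}\wedge\bvalue{\varphi(w)}\bigr)\le\bigvee_{u\in\dom{x}}x(u)\wedge\bvalue{\varphi(u)}$ via (9); clause (11) is the dual computation with meets replacing joins, using $a\wedge(a\rar b)\le b$ and the adjunction defining $\rar$.
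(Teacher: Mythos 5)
Your proposal is correct, and it supplies exactly the argument the paper itself omits: the authors state this theorem without proof, deferring to Bell's book, and your wave-by-wave development (the simultaneous transfinite induction on the natural sum of ranks for (5)--(7), followed by induction on formula complexity for (9)--(11)) is precisely the standard adaptation of that source. The routing details you flag --- which triple each mutual-recursion step passes through, and the need for a measure strictly monotone in every coordinate --- are the genuine delicate points, and you handle them correctly.
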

            
            We then must define a ``localic semantic'', or a notion of truth for that structure so that we may claim that it actually models some form of set theory.
            
            \begin{definition}{$\H$-Semantic / $\H$-Validity / Localic Semantic}
            
                We define a Tarskian-like $\vDash$ for each $\H$ to say:
                
                $$\V\H\vDash\varphi \iff \bvalue{\varphi}=\top $$
                
                Since $\bvalue{\varphi}$ is a function in disguise, to properly make this comparison, it either must be constant or we must convert $\top$ to the constantly tautological function. The latter allows us to interpret formulas with free variables, and will assign them truth if they are always true under any valuation.
                
                Hence, we extend the notion for, given $\sigma:\vars\to\V\H$, 
                
                $$ \V\H\vDash_\sigma \varphi \iff \bvalue{\varphi}(\sigma) = \top$$
            \end{definition}
            
            \begin{prop}{Some properties of the localic $\vDash$}
                        
        		\begin{enumerate}
        			\item $\V\H\vDash_\sigma\varphi$ and $\V\H     \vDash_\sigma\psi$ $\iff$ $\V\H    \vDash_\sigma\varphi\land\psi$
        			\item $\V\H\vDash_\sigma\forall x:\varphi(x)$                 	$\iff$ for all  $X$, $\V\H\vDash_{\sigma[x\mid X]}\varphi(x)$
        			\item $\V\H\vDash_\sigma\exists x:\varphi(x)$                 	$\iff$ for some $X$, $\V\H\vDash_{\sigma[x\mid X]}\varphi(x)$
        			\item $\V\H\vDash_\sigma\neg\varphi$                          	$\RAR$ $\V\H\not\vDash_\sigma\varphi$
        			\item $\V\H\vDash_\sigma\varphi$ or $\V\H    \vDash_\sigma\psi$ $\RAR$ $\V\H    \vDash_\sigma\varphi\lor\psi$
        			\item $\V\H\vDash_\sigma\psi$ or $\V\H\vDash_\sigma\neg\varphi$ $\RAR$ $\V\H    \vDash_\sigma\varphi\rar\psi$
        		\end{enumerate}
        		
        		Furthermore, modus ponens; generalization; instances of intuitionistic tautology (or classical tautologies if $\H$ is Boolean) and the intuitionistic first order logic axioms are all valid under the eyes of our $\H$-validity.
        		
        		In fact, $\H$-$\vDash$ is sound with respect to $\vdash$.
            \end{prop}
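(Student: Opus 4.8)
The plan is to translate each clause into a statement about the complete Heyting algebra $\H$ by unwinding the definition of $\bvalue{\cdot}$, and then to invoke a handful of elementary facts valid in every (complete) Heyting algebra: for a family $(a_i)_{i\in I}$ one has $\bigwedge_{i} a_i = \top$ iff $a_i=\top$ for all $i$; $a\wedge b=\top$ iff $a=\top=b$; $a\vee b=\top$ whenever $a=\top$ or $b=\top$; $a\to b=\top$ iff $a\leq b$ (so $a\to\top=\top$ and $\bot\to b=\top$); and the evaluation inequality $a\wedge(a\to b)\leq b$. Clauses 1 and 2 are then genuine biconditionals obtained by direct substitution: $\bvalue{\varphi\wedge\psi}(\sigma)=\bvalue{\varphi}(\sigma)\wedge\bvalue{\psi}(\sigma)$ together with the conjunction fact gives 1, and $\bvalue{\forall x_i\,\varphi}(\sigma)=\bigwedge_{X\in\V\H}\bvalue{\varphi}(\sigma_{[x_i\mid X]})$ together with the infinitary meet fact gives 2 (which, read as an inference rule, is exactly the soundness of generalization).

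Clauses 3--6 I would prove as the one-directional implications indicated, since the converses fail for a general complete Heyting algebra. For 3: from $\bvalue{\varphi}(\sigma_{[x_i\mid X]})\leq\bigvee_{Y\in\V\H}\bvalue{\varphi}(\sigma_{[x_i\mid Y]})=\bvalue{\exists x_i\,\varphi}(\sigma)$, if the left-hand side equals $\top$ for every (equivalently, for some) $X$ then $\V\H\vDash_\sigma\exists x_i\,\varphi$; the reverse implication is false because a supremum can be $\top$ with no summand equal to $\top$. For 4: $\bvalue{\neg\varphi}(\sigma)=\bvalue{\varphi}(\sigma)\to\bot=\top$ forces $\bvalue{\varphi}(\sigma)=\bot$, which is $\neq\top$ as soon as $\H$ is nondegenerate, whence $\V\H\not\vDash_\sigma\varphi$. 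For 5: if $\bvalue{\varphi}(\sigma)=\top$ or $\bvalue{\psi}(\sigma)=\top$ then $\bvalue{\varphi\vee\psi}(\sigma)=\bvalue{\varphi}(\sigma)\vee\bvalue{\psi}(\sigma)=\top$. For 6: if $\bvalue{\psi}(\sigma)=\top$ then $\bvalue{\varphi\to\psi}(\sigma)=\bvalue{\varphi}(\sigma)\to\top=\top$, and if $\bvalue{\neg\varphi}(\sigma)=\top$ then, as in 4, $\bvalue{\varphi}(\sigma)=\bot$ and $\bvalue{\varphi\to\psi}(\sigma)=\bot\to\bvalue{\psi}(\sigma)=\top$.

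For the final soundness assertion I would fix a Hilbert-style calculus for intuitionistic first-order logic (classical if $\H$ is Boolean) and prove by induction on derivations that $\vdash\varphi$ implies $\bvalue{\varphi}(\sigma)=\top$ for all $\sigma$. Modus ponens is handled by the evaluation inequality: from $\bvalue{\varphi}(\sigma)=\top$ and $\bvalue{\varphi\to\psi}(\sigma)=\top$, $\top=\bvalue{\varphi}(\sigma)\wedge(\bvalue{\varphi}(\sigma)\to\bvalue{\psi}(\sigma))\leq\bvalue{\psi}(\sigma)$; generalization is clause 2. Each propositional axiom scheme reduces, via $\bvalue{\varphi*\psi}(\sigma)=\bvalue{\varphi}(\sigma)*\bvalue{\psi}(\sigma)$, to an identity that holds in every Heyting algebra (e.g. $a\leq(b\to a)$ for $\varphi\to(\psi\to\varphi)$; similarly for the distribution axiom $(\varphi\to(\psi\to\chi))\to((\varphi\to\psi)\to(\varphi\to\chi))$, the conjunction and disjunction axioms, and $\bot\to\varphi$). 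The quantifier axioms require a routine \emph{substitution lemma}, $\bvalue{\varphi(t/x)}(\sigma)=\bvalue{\varphi}(\sigma_{[x\mid t^{\sigma}]})$ where $t^{\sigma}$ is the value of the term $t$ (trivial here, as the only terms of $\LZF$ are variables and the constants naming $\V\H$); granting it, $\forall x\,\varphi(x)\to\varphi(t)$ and $\varphi(t)\to\exists x\,\varphi(x)$ become the obvious inequalities $\bigwedge_X\bvalue{\varphi}(\sigma_{[x\mid X]})\leq\bvalue{\varphi}(\sigma_{[x\mid t^\sigma]})\leq\bigvee_X\bvalue{\varphi}(\sigma_{[x\mid X]})$. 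I expect the main obstacle to be the quantifier axioms that move a quantifier across a connective --- $(\forall x\,(\chi\to\varphi))\to(\chi\to\forall x\,\varphi)$ and its dual $(\forall x\,(\varphi\to\chi))\to(\exists x\,\varphi\to\chi)$ with $x$ not free in $\chi$: the first uses the identity $a\to\bigwedge_i b_i=\bigwedge_i(a\to b_i)$, while the second is precisely where the frame law $a\wedge\bigvee_i b_i=\bigvee_i(a\wedge b_i)$ built into the definition of a locale is consumed. Everything else is the mechanical translation already carried out in \cite{Bel05}.
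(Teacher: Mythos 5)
Your proof is correct; the paper in fact states this proposition without proof (deferring, as with the preceding theorem on formula valuations, to Bell), so what you give is the standard argument that the authors are implicitly citing: each clause reduces to an elementary fact about complete Heyting algebras, the biconditionals surviving only for $\wedge$ and $\forall$, and soundness follows by induction on derivations. Three small remarks: clause 3 as printed is a typo (``$\iff$ for all $X$'' should be the one-directional ``if for some $X$''), which you silently and correctly repair; your nondegeneracy caveat in clause 4 (the implication fails for the one-element $\H$, where $\bot=\top$) is a hypothesis the paper leaves implicit; and for the final soundness claim your induction should also dispatch the equality axioms of $\vdash$, which is exactly what items 1 and 9 of the earlier theorem on properties of the formula valuation ($\bvalue{x=x}=1$ and the Leibniz property) provide, so it is worth citing them rather than folding them into ``the mechanical translation.''
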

            
            
            
            \begin{remark}
                The difference between the Tarskian and this Localic semantic is that some equivalences that hold for Tarski's do not hold in Localic semantic (in the nontrivial cases, \emph{i.e.}, $\H\neq\set{0}$ or $\H\neq2$). Also, there is very little reason -- a priori -- to expect there to be a witness to an existential formula, as it is the arbitrary supremum of values of other formulas.
                
                The supremum in existential formulas may be attained by witnesses, but this relies on the additional property of $\H$ being Boolean, otherwise only a weaker statement holds. 
            \end{remark}
            
            There is, in fact, a canonical representation of elements of our universe $V$ and the many universes $\V\H$. For the case $\H=2=\set{0,1}$, this canonical representation establishes a sort of model equivalence, has a good left inverse and is onto modulus $\V\H$-equality with value $\top$.
            
            \begin{definition} \label{immer-def}{Immersion of $V$ in    $\V\H$}
                
                Define, $\in$-recursively:
                
                $$\vtovtwo{x} = \set{ \tuple{\vtovtwo y, \top} \mid y\in x }$$
                
                Also, for $x\in\V2$ define $(\cdot\in\dom\cdot)$-recursively:
                
                $$\vtwotov{x} = \set{ \vtwotov y \mid y\in\dom x \land x(y)= \top  } $$
            \end{definition}
            
            
            \begin{prop} \label{immer-prop} The following hold:
            
                \begin{enumerate}
        			\item $\forall x, y		 	: x \in y  \bim \bvalue{\vtovtwo x\in \vtovtwo y} = 1$
        			\item $\forall x', y'\in\V2 : \vtwotov{x'}\in\vtwotov{y'} \bim \bvalue{ x'\in y'} = 1$
        
        			\item $\forall x, y		 	: x   = y  \bim \bvalue{\vtovtwo x =  \vtovtwo y} = 1$
        			\item $\forall x', y'\in\V2 : \vtwotov{x'}  =\vtwotov{y'} \bim \bvalue{ x' =  y'} = 1$
        
        			\item $\forall x 			: \vtwotov{\vtovtwo x} = x$
        			\item $\forall x' \in\V2 	: \bvalue{\vtovtwo{\vtwotov {x'}} = x'} = 1$
    		    \end{enumerate}
    
            \end{prop}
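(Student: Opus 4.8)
The plan is to obtain the six statements from a small number of well-founded inductions. For items (1)--(4) the natural order is the relation $\prec$ of the Proposition above (``$\prec$ is well-founded''), since the recursions defining $\bvalue{\cdot\in\cdot}$, $\bvalue{\cdot\ni\cdot}$ and $\bvalue{\cdot=\cdot}$ descend exactly along $\prec$. Items (1) and (3) I would prove simultaneously, for all $x,y\in V$, but in the \emph{strengthened two-valued form}: $\bvalue{\vtovtwo{x} = \vtovtwo{y}}_\H = 1$ if $x=y$ and $=0$ otherwise; $\bvalue{\vtovtwo{x} \in \vtovtwo{y}}_\H = 1$ if $x\in y$ and $=0$ otherwise; and $\bvalue{\vtovtwo{x} \ni \vtovtwo{y}}_\H = 1$ if $y\in x$ and $=0$ otherwise. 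Three facts turn the inductive step into pure bookkeeping: $\set{0_\H,1_\H}$ is closed in $\H$ under $\wedge$, $\vee$, $\rar$ and arbitrary suprema and infima, with the Boolean truth tables; $\dom{\vtovtwo{x}} = \set{\vtovtwo{z} : z\in x}$ with $\vtovtwo{x}(\vtovtwo{z})=\top$ for all such $z$; and the subclass of pairs of the form $\tuple{\vtovtwo{x},\vtovtwo{y}}$ is $\prec$-downward closed (a $\prec$-predecessor of $\tuple{\vtovtwo{x},\vtovtwo{y}}$ has the form $\tuple{\vtovtwo{x},\vtovtwo{z}}$ with $z\in y$, or $\tuple{\vtovtwo{w},\vtovtwo{y}}$ with $w\in x$), so $\prec$-induction is available on it. One then just substitutes the induction hypothesis into each recursion clause: for instance $\bvalue{\vtovtwo{x} = \vtovtwo{y}}_\H$ becomes $\bigwedge_{z\in y}\bvalue{\vtovtwo{x} \ni \vtovtwo{z}}_\H \wedge \bigwedge_{w\in x}\bvalue{\vtovtwo{w} \in \vtovtwo{y}}_\H$, which the hypothesis evaluates to $1_\H$ exactly when $y\subseteq x$ and $x\subseteq y$, i.e.\ when $x=y$; the $\in$- and $\ni$-clauses are handled the same way, with $\bvalue{x=x}=1$ and the other items of \emph{Properties of Formula valuation} on hand. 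The passage to $\set{0,1}$-values is essential rather than cosmetic: over a non-Boolean $\H$ a supremum can equal $1_\H$ without any summand equalling $1_\H$, so the backward directions of (1) and (3) do not follow from the ``$=1$'' halves alone.

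For items (2) and (4) the algebra is $2=\set{0,1}$, so there is no decidability issue, and I would again induct on $\prec$, this time over all of $\V2\times\V2$ (which is $\prec$-downward closed), proving for all $x',y'\in\V2$ that $\bvalue{x'\in y'}_2 = 1 \iff \vtwotov{x'}\in\vtwotov{y'}$, that $\bvalue{x'=y'}_2 = 1 \iff \vtwotov{x'}=\vtwotov{y'}$, and the dual statement for $\ni$. Writing $\vtwotov{x'} = \set{\vtwotov{u} : u\in\dom{x'},\ x'(u)=\top}$, the membership clause gives $\bvalue{x'\in y'}_2 = 1$ iff there is $u\in\dom{y'}$ with $y'(u)=\top$ and $\bvalue{x'=u}_2=1$, which by the hypothesis at the $\prec$-smaller pair $\tuple{x',u}$ says exactly $\vtwotov{x'}\in\vtwotov{y'}$; the equality clause reduces, via the hypothesis at the $\prec$-smaller pairs $\tuple{x',u}$ ($u\in\dom{y'}$) and $\tuple{v,y'}$ ($v\in\dom{x'}$), to ``$\vtwotov{y'}\subseteq\vtwotov{x'}$ and $\vtwotov{x'}\subseteq\vtwotov{y'}$''; the $\ni$-clause is dual.

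Items (5) and (6) are short. Item (5) is a plain $\in$-induction on $x$: $\vtwotov{\vtovtwo{x}} = \set{\vtwotov{z} : z\in\dom{\vtovtwo{x}},\ \vtovtwo{x}(z)=\top} = \set{\vtwotov{\vtovtwo{y}} : y\in x} = \set{y : y\in x} = x$, the third equality by the hypothesis. Item (6) is an induction on $\rank{x'}$ (equivalently, on the relation $\cdot\in\dom{\cdot}$ that founds the definition of $x'\mapsto\vtwotov{x'}$): since $\dom{\vtovtwo{\vtwotov{x'}}} = \set{\vtovtwo{\vtwotov{u}} : u\in\dom{x'},\ x'(u)=\top}$ with all values $\top$, to obtain $\bvalue{\vtovtwo{\vtwotov{x'}} = x'}_2 = 1$ it suffices to verify the two families of conjuncts of the equality recursion: for each $u\in\dom{x'}$ with $x'(u)=\top$ one has $\bvalue{\vtovtwo{\vtwotov{u}}\in x'}_2 \geq x'(u)\wedge\bvalue{\vtovtwo{\vtwotov{u}} = u}_2 = \bvalue{\vtovtwo{\vtwotov{u}} = u}_2 = 1$ by the hypothesis (a single term of the defining supremum), and symmetrically $\bvalue{u\in\vtovtwo{\vtwotov{x'}}}_2 \geq \bvalue{u = \vtovtwo{\vtwotov{u}}}_2 = 1$ using also $\bvalue{x=y}=\bvalue{y=x}$ (\emph{Properties of Formula valuation}, item 3); these conjuncts exhaust the relevant infima, so the value is $1$.

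The one genuinely delicate point is in items (1) and (3): the backward directions, over an arbitrary (possibly non-Boolean) complete Heyting algebra, force the $\set{0,1}$-valued strengthening of the statement, and one must first check that the recursions really descend along the already-established well-founded relation $\prec$ (and that the relevant subclasses of pairs are $\prec$-closed) before the inductions can be run. With those in place, the remaining work is routine manipulation of $\dom{\vtovtwo{x}}$, the constant value $\top$, and \emph{Properties of Formula valuation}.
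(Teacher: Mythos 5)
The paper states this proposition without proof (deferring to \cite{Bel05}), and your argument is correct and is essentially the standard one: the $\set{0,1}$-valued strengthening of items (1) and (3) is exactly the device needed for the backward implications over a non-Boolean $\H$, and your simultaneous $\prec$-inductions (together with the $\ni$-clauses) descend correctly along the recursion defining the atomic values. Nothing further is needed.
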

            
            \begin{corollary} Suppose  $\H \neq \{0\}.$ \label{immer-cor}
            \begin{enumerate}
             
                \item 
                $V\vDash_\sigma \varphi\iff\V2\vDash_{\vtovtwo{\_}\circ\sigma}\varphi$
                
                   \item  $\varphi\in\Sigma_0\RAR\left[ V\vDash_\sigma \varphi \iff \V\H\vDash_{\vtovtwo{\_}\circ\sigma}\varphi\right]$
                   
                   \item  $\varphi\in\Sigma_1\RAR\left[ V\vDash_\sigma \varphi \RAR \V\H\vDash_{\vtovtwo{\_}\circ\sigma}\varphi\right]$
            \end{enumerate}
            \end{corollary}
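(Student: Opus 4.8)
The plan is to prove~(1) by induction on the complexity of $\phi$ (run uniformly over all valuations, as the quantifier steps need): the atomic case will be exactly Proposition~\ref{immer-prop}, the propositional connectives will be immediate because $2$ is Boolean, and the quantifier case will require the retraction $\vtwotov{\_}$ of Definition~\ref{immer-def} precisely to make up for the fact that $\vtovtwo{\_}$ is not onto $\V2$. In spirit, (1) says that $\vtovtwo{\_}$ induces an isomorphism of $\LZF$-structures $\point{V,\in}\cong\V2/{\approx}$, where $p\approx q\iff\bvalue{p=q}=1$ (an equivalence relation by items 1, 3 and 5 of \emph{Properties of Formula valuation}), hence is elementary. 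Parts~(2) and~(3) will be the one-directional fragment of this that survives when $2$ is replaced by an arbitrary complete Boolean algebra $\B$ and $\phi$ is restricted to $\Sigma_0$, resp.\ $\Sigma_1$: over a general $\B$ the embedding $\vtovtwo{\_}\colon V\to\V\B$ has no well-behaved retraction, and $\bvalue{\exists y\,\psi}_\B=1_\B$ may hold with no standard witness, so only the implication from $V$-truth to value $1_\B$ survives.

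For~(1): after unwinding $\V2\vDash_\tau\chi\iff\bvalue{\chi}(\tau)=\top$, the atomic cases $\phi\equiv x_iRx_j$ ($R\in\set{\in,=}$) are items 1 and 3 of Proposition~\ref{immer-prop}. The connective cases need nothing new: on $\set{0,1}$ the Heyting operations are the classical truth tables, so the recursive clauses defining $\bvalue{\,\cdot\,}$ over $2$ mirror the Tarskian clauses of $\point{V,\in}$ verbatim (equivalently, the implications of \emph{Some Properties of the Localic $\vDash$} become biconditionals here, as observed in the Remark following it). For $\phi\equiv\forall x_k\,\psi$, the direction $\LAR$ is immediate: given $X\in V$, instantiate the $\V2$-hypothesis at $x'=\vtovtwo X$ and apply the inductive hypothesis, using $(\vtovtwo{\_}\circ\sigma)[x_k\mid\vtovtwo X]=\vtovtwo{\_}\circ\sigma[x_k\mid X]$. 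For $\RAR$ one must verify $\bvalue{\psi}\bigl((\vtovtwo{\_}\circ\sigma)[x_k\mid x']\bigr)=\top$ for \emph{every} $x'\in\V2$, not only for standard names: put $X:=\vtwotov{x'}$, so that the hypothesis on $\phi$ and the inductive hypothesis give $\bvalue{\psi}\bigl(\vtovtwo{\_}\circ\sigma[x_k\mid\vtwotov{x'}]\bigr)=\top$, and since $\bvalue{\vtovtwo{\vtwotov{x'}}=x'}=1$ by Proposition~\ref{immer-prop}(6), the substitution law $\bvalue{u=w}\wedge\bvalue{\varphi(u)}=\bvalue{w=u}\wedge\bvalue{\varphi(w)}$ (item 9 of \emph{Properties of Formula valuation}) lets us replace $\vtovtwo{\vtwotov{x'}}$ by $x'$ inside the valuation. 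The case $\phi\equiv\exists x_k\,\psi$ is dual: $\RAR$ needs only a standard witness together with $\bvalue{\exists x_k\,\psi}\ge\bvalue{\psi}(\cdots[x_k\mid\vtovtwo X])$, while $\LAR$ uses that in $2$ a supremum equals $\top$ iff one of its terms does, and then moves that term to a standard name via $\vtwotov{\_}$ and item 9 once more.

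For~(2) and~(3): first, by the same $\prec$-recursion that establishes Proposition~\ref{immer-prop}, prove its $\B$-valued version --- for all $x,y\in V$ the values $\bvalue{\vtovtwo x\in\vtovtwo y}_\B$ and $\bvalue{\vtovtwo x=\vtovtwo y}_\B$ lie in $\set{0_\B,1_\B}$ and equal $1_\B$ exactly when $x\in y$, resp.\ $x=y$ --- the recursion staying inside $\set{0_\B,1_\B}$ since $1_\B\to a=a$, $1_\B\wedge a=a$, and suprema and infima of $0_\B$'s and $1_\B$'s remain among them. Then, by induction on a $\Sigma_0$ formula $\phi$, handling connectives and bounded quantifiers only (using $\dom{\vtovtwo X}=\set{\vtovtwo Y:Y\in X}$ with $\vtovtwo X(\vtovtwo Y)=1_\B$ and the standard unwinding of bounded quantifiers), prove the two-sided refinement $V\vDash_\sigma\phi\iff\bvalue{\phi}(\vtovtwo{\_}\circ\sigma)=1_\B$; item~(2) is its forward half. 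Finally, for~(3), write a $\Sigma_1$ formula as $\phi\equiv\exists\vec y\,\psi$ with $\psi\in\Sigma_0$: if $V\vDash_\sigma\phi$, choose witnesses $\vec Y$ in $V$ with $V\vDash_{\sigma[\vec y\mid\vec Y]}\psi$; by~(2), $\bvalue{\psi}\bigl(\vtovtwo{\_}\circ\sigma[\vec y\mid\vec Y]\bigr)=1_\B$, hence $\bvalue{\phi}(\vtovtwo{\_}\circ\sigma)=\bigvee_{\vec y'}\bvalue{\psi}(\cdots)\ge\bvalue{\psi}\bigl(\vtovtwo{\_}\circ\sigma[\vec y\mid\vec Y]\bigr)=1_\B$, i.e.\ $\V\B\vDash_{\vtovtwo{\_}\circ\sigma}\phi$.

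The step I expect to be the genuine obstacle is the $\RAR$ direction of the universal quantifier in~(1) (dually, $\LAR$ for the existential one): since $\vtovtwo{\_}$ misses most of $\V2$, one cannot argue by merely substituting standard names, and the argument really does need the retraction $\vtwotov{\_}$ together with substitution under a provable equality. Everything else --- the classicality of $\set{0,1}$ for the connectives in~(1), and the fact that the $\B$-valued recursion never leaves $\set{0_\B,1_\B}$ on standard names for~(2)--(3) --- is routine.
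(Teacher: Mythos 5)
Your proof is correct and is precisely the standard argument the paper leaves to the reader (deferring to \cite{Bel05}): Proposition \ref{immer-prop} for the atomic cases, the classicality of $\set{0,1}$ for the connectives, the retraction $\vtwotov{\_}$ combined with substitution under a provable equality (item 9 of the valuation properties) for the unbounded quantifiers in (1), and the usual two-valued absoluteness of $\vtovtwo{\_}$-names for (2) and (3). The only refinement worth making explicit is that in (2) the inductive invariant should be stated as ``$\bvalue{\phi}(\vtovtwo{\_}\circ\sigma)\in\set{0_\B,1_\B}$ \emph{and} equals $1_\B$ iff $V\vDash_\sigma\phi$'' rather than the biconditional alone, since the two-valuedness (which you do verify for atomics) is what the $\neg$, $\rar$, $\vee$ and bounded-$\forall$ steps actually consume.
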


            \begin{theorem}{$\V\H$ are models of Intuitionistic Set Theory.} Furthermore, if $\H$ is Boolean, it validates classical set theory and the Axiom of Choice (provided the base universe already did).
            
                This is to say, for all $\varphi$ axioms of the appropriate theory: 
            
                $$ \V\H \vDash \varphi$$
            
                Again we provice no proof since this result is well established (\cite{Bel05}). 
            \end{theorem}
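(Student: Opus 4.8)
The plan is to verify, one axiom at a time, that every axiom $\sigma$ of $\mathrm{IZF}$ satisfies $\bvalue{\sigma}=1_\H$. Since the preceding proposition already records that $\H$-$\vDash$ is sound with respect to intuitionistic first-order derivability (modus ponens, generalisation, the intuitionistic logical axioms), it is enough to check the non-logical axioms, and the only tools one needs are the eleven computational identities of the ``Properties of Formula valuation'' theorem together with the clauses defining the localic semantics and the bounded-quantifier identities (10)--(11). For the Boolean strengthening one additionally uses the fact --- also in that proposition --- that when $\H$ is Boolean every instance of a classical tautology has value $1_\H$, so the very same computations then exhibit a model of $\mathrm{ZF}$; the Axiom of Choice is handled separately at the end.

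The ``algebraic'' axioms are verified by exhibiting explicit names and computing. \emph{Extensionality} follows essentially from the definition of $\bvalue{\cdot=\cdot}$ and identities (4),(6),(7): expanding $\bvalue{\forall z\,(z\in x\leftrightarrow z\in y)}$ with clause (11) shows it lies below $\bvalue{x=y}$. \emph{Pairing} of $x,y\in\V\H$ is witnessed by the name $\set{\tuple{x,1_\H},\tuple{y,1_\H}}$; \emph{Union} of $x$ by the name with domain $\bigcup_{u\in\dom{x}}\dom{u}$ and value $v\mapsto\bigvee_{u\in\dom{x}}x(u)\wedge u(v)$; \emph{Power Set} of $x$ by the name with domain $\H^{\dom{x}}$ and value $w\mapsto\bvalue{w\subseteq x}$; the \emph{Separation} instance for $\varphi$ at $x$ by the name with domain $\dom{x}$ and value $u\mapsto x(u)\wedge\bvalue{\varphi(u)}$. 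In each case one checks that the defining biconditional has value $1_\H$, using properties (2),(8),(10),(11). For \emph{Infinity} I would take the immersion $\vtovtwo{\omega}$ of the von Neumann $\omega$ and, using Proposition~\ref{immer-prop} (chiefly that $x\in y\bim\bvalue{\vtovtwo{x}\in\vtovtwo{y}}=1$ and $x=y\bim\bvalue{\vtovtwo{x}=\vtovtwo{y}}=1$), check directly that $\vtovtwo{\omega}$ is, with value $1_\H$, an inductive set --- equivalently, invoke Corollary~\ref{immer-cor} since inductiveness of $\vtovtwo{\omega}$ is (provably equivalent to) a $\Sigma_0$ statement true in $V$.

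The genuinely delicate axiom is \emph{Collection} (equivalently \emph{Replacement} given Separation; over intuitionistic logic one uses Strong Collection), and this is where I expect the main obstacle. Given $x\in\V\H$ and $\varphi(u,v)$, for each $u\in\dom{x}$ one has $\bvalue{\exists v\,\varphi(u,v)}=\bigvee_{a\in\V\H}\bvalue{\varphi(u,a)}$, a supremum indexed by a proper class, with no witness a priori. The remedy is a reflection argument on the rank $\rank{\cdot}$: for each $u\in\dom{x}$ the image of $a\mapsto\bvalue{\varphi(u,a)}$ is a \emph{set} (a subset of $\H$), so by Replacement in the metatheory there is an ordinal $\gamma(u)$ such that every value already appears among names of rank below $\gamma(u)$; set $\beta=\sup_{u\in\dom{x}}\gamma(u)$, a legitimate ordinal because $\dom{x}$ is a set. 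Then the name $w$ with domain $\V\H_\beta$ and value $a\mapsto\bigvee_{u\in\dom{x}}x(u)\wedge\bvalue{\varphi(u,a)}$ can be shown, via clauses (10)--(11), to satisfy $\bvalue{(\forall u\in x\,\exists v\,\varphi(u,v))\to(\forall u\in x\,\exists v\in w\,\varphi(u,v))}=1_\H$, together with the extra Strong-Collection clause $\bvalue{\forall v\in w\,\exists u\in x\,\varphi(u,v)}=1_\H$ forced by the choice of the values of $w$. Keeping the bookkeeping of ranks and suprema exactly right is the technical heart of the theorem.

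Finally, \emph{Set Induction} (the $\in$-induction schema) is proved by well-founded induction along $\rank{\cdot}$ on $\V\H$: assuming $\bvalue{\forall x\,(\forall y\in x\,\varphi(y)\to\varphi(x))}=1_\H$, one proves $\bvalue{\varphi(x)}=1_\H$ by induction on $\rank{x}$, using clause (11) and identity (2), $y(u)\le\bvalue{u\in y}$. When $\H$ is Boolean all of these computations are unchanged, and because classical tautologies now have value $1_\H$, $\tuple{\V\H,\bvalue{\ }_\H}$ models $\mathrm{ZF}$. For the \emph{Axiom of Choice} I would invoke the Boolean maximum principle (in a complete Boolean algebra every $\bvalue{\exists v\,\psi(v)}$ is attained by some name --- proved by a maximal-antichain/mixing argument, and this is exactly where Booleanness is indispensable, cf.\ the Remark) to turn families of truth-value-weighted choices into genuine choices of names, wellorder the relevant $\V\H_\alpha$ in the metatheory using the metatheoretic AC, and assemble from this data a name for a choice function whose defining property evaluates to $1_\H$.
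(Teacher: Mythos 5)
The paper deliberately states this theorem without proof, deferring entirely to Bell's \cite{Bel05}, and your outline is a faithful reconstruction of that standard argument: the usual explicit names for Pairing, Union, Power Set and Separation, $\vtovtwo{\omega}$ for Infinity, the rank-reflection/Replacement-in-the-metatheory argument to bound the class-indexed supremum in (Strong) Collection, $\in$-induction via induction on $\rank{x}$, and the mixing/maximum principle for AC in the Boolean case. I see no gap; this is essentially the same route the cited reference takes, just compressed.
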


    
    
    \section{$\V{\H}$ and Equivalent Descriptions of $\Sh{\H}$}\label{sec:descr}
    
    In this section, that is based on \cite{Bel05} and \cite{Bor08c}, we present some equivalent descriptions of the category of sheaves of a complete Heyting algebra $\HH$, $\Sh{\HH} \simeq \hset \simeq \Seth{\H}$. This is not only for the reader's convenience, but also because we will later need a detailed description of the equivalence $\hset \simeq \Seth{\H}$, which is only sketched in the appendix of \cite{Bel05}. We start by providing the definitions of these categories.
    
    
        \begin{definition}
            Consider the equivalence relation in $\V{\H}$ given by $f \equiv g$ if, and only if, $ \bvalue{f=g} = 1$. The category $\Seth{\H}$ is defined as:
            $$\Obj{\Seth{\H}} \defeq \quot{\V{\H}}{\equiv}$$
            $$\Seth{\H}\left([x],[y]\right) \defeq \set{[\phi] \in \Seth{\H} \ \middle| \ \bvalue{\fun{\phi: x \to y}} = 1}$$
            The arrows do not depend on the choice of representative of the equivalence classes $[x]$ and $[y]$. The composition and identity are defined as in $\Seth{\H}$ but with the quotient being taken.
        \end{definition}
        
        \begin{definition}
            A $\H$-set is a pair $\tuple{X,\delta}$ such that $X$ is a set and $\delta: X \times X \to \H$ satisfies, for every $x,y,z \in X$,
            \begin{enumerate}
                \item $\delta(x,y) = \delta(y,x)$;
                \item $\delta(x,y) \wedge \delta(y,z) \leq \delta(x,z)$.
            \end{enumerate}
        \end{definition}
        
        \begin{definition}
            A morphism $\phi: \tuple{X,\delta} \to \tuple{X',\delta'}$ of $\H$-sets is a function $\phi: X \times X' \to \H$ such that, for all $x,y \in X$ e $x',y' \in X'$:
            \begin{enumerate}
                \item $\delta'(x',y') \wedge \phi(x,y') \leq \phi(x,x')$;
                \item $\delta(x,y) \wedge \phi(x,y') \leq \phi(y,y')$;
                \item $\phi(x,x') \wedge \phi(x,y') \leq \delta'(x',y')$;
                \item $\bigvee\limits_{z' \in X'} \phi(x,z') = \delta(x,x)$.
            \end{enumerate}
            A morphism of $\H$-sets, then, can be understood as an $\H$-valued functional relation.
        \end{definition}
        
        Given morphisms $\phi: \tuple{X,\delta} \to \tuple{X',\delta'}$ and $\psi: \tuple{X',\delta'} \to \tuple{X'',\delta''}$ of $\H$-sets, their composition $\psi \circ \phi$ is given by:
        $$(\psi \circ \phi)(x,x'') = \bigvee\limits_{x' \in X'} \phi(x,x') \wedge \psi(x',x'')$$
        for all $x \in X, x'' \in X''$. The identity morphism $id_{\tuple{X,\delta}}$ is the function such that:
        $$id_{\tuple{X,\delta}}(x,y) = \delta(x,y), \text{ for all } x,y \in X$$
        Thus, we can define the category $\hset$, of $\H$-sets and their morphisms.
        
        One result on morphisms of $\H$-sets in particular will be useful later on:
        
        \begin{prop}\label{prop:igualdade-morf-hsets}
            Given morphisms $\phi,\psi: \tuple{X,\delta} \to \tuple{X',\delta'}$ of $\H$-sets, the following conditions are equivalent:
            \begin{enumerate}
                \item $\phi = \psi$;
                \item $\phi(x,x') \leq \psi(x,x')$, for all $x \in X$ and $x' \in X'$.
            \end{enumerate}
        \end{prop}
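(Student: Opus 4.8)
The forward implication $(1)\Rightarrow(2)$ is immediate, so the whole content lies in proving $(2)\Rightarrow(1)$. Assume therefore that $\phi(x,x')\leq\psi(x,x')$ for all $x\in X$ and $x'\in X'$; the plan is to deduce the reverse inequality $\psi(x,x')\leq\phi(x,x')$, which together with the hypothesis yields $\phi=\psi$.

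First I would record the auxiliary observation $\psi(x,x')\leq\delta(x,x)$, which is immediate from the totality axiom (4) applied to $\psi$, since $\psi(x,x')$ is one of the joinands in $\bigvee_{z'\in X'}\psi(x,z')=\delta(x,x)$. The main step is then the computation
\[
\psi(x,x')=\psi(x,x')\wedge\delta(x,x)=\psi(x,x')\wedge\bigvee_{z'\in X'}\phi(x,z')=\bigvee_{z'\in X'}\bigl(\psi(x,x')\wedge\phi(x,z')\bigr),
\]
where the second equality is axiom (4) for $\phi$ and the third is the frame distributive law that defines a locale.

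It now suffices to bound each joinand by $\phi(x,x')$. Fix $z'\in X'$. By the hypothesis $\phi(x,z')\leq\psi(x,z')$ and axiom (3) for $\psi$ we get $\psi(x,x')\wedge\phi(x,z')\leq\psi(x,x')\wedge\psi(x,z')\leq\delta'(x',z')$; combining this with the trivial bound $\psi(x,x')\wedge\phi(x,z')\leq\phi(x,z')$ and then invoking axiom (1) for $\phi$, namely $\delta'(x',z')\wedge\phi(x,z')\leq\phi(x,x')$, gives $\psi(x,x')\wedge\phi(x,z')\leq\phi(x,x')$. Taking the join over all $z'\in X'$ in the displayed identity yields $\psi(x,x')\leq\phi(x,x')$, completing the argument and hence the proof of Proposition~\ref{prop:igualdade-morf-hsets}.

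I do not expect a genuine obstacle here; the only thing one has to \emph{see} is that the totality of $\phi$ (axiom (4)) should be fed in through the factor $\delta(x,x)$, and then that axiom (3) for $\psi$ followed by axiom (1) for $\phi$ is exactly the chain that transports the inequality back from $\delta'$ to $\phi$. Everything else is monotonicity of $\wedge$ and the distributive law, so the write-up will be short.
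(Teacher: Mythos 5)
Your argument is correct and complete: the chain
\[
\psi(x,x')=\psi(x,x')\wedge\bigvee_{z'\in X'}\phi(x,z')=\bigvee_{z'\in X'}\bigl(\psi(x,x')\wedge\phi(x,z')\bigr)\leq\phi(x,x'),
\]
using axiom (4) for $\phi$, the frame law, then axiom (3) for $\psi$ followed by axiom (1) for $\phi$ on each joinand, is exactly the standard route (the paper itself states this proposition without proof, deferring to the literature). Nothing to add.
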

        
        \begin{definition}
            A singleton of an $\H$-set $\tuple{X,\delta}$ is a mapping $\sigma: X \to H$ such that, for every $x,y \in X$,
            \begin{enumerate}
                \item $\sigma(x) \wedge \sigma(y) \leq \delta(x,y)$;
                \item $\sigma(x) \wedge \delta(x,y) \leq \sigma(y)$.
            \end{enumerate}
            Note that, given $x \in X$, the function $\sigma_x: X \to H$ such that $\sigma_x(y) = \delta(x,y)$, for all $y \in H$, defines a singleton.
        \end{definition}
        
        \begin{definition}
            Consider $\sigma(X)$ the collection of singletons of an $\H$-set $\tuple{X,\delta}$. $\tuple{X,\delta}$ is said to be complete if the function $\Upsilon: X \to \sigma(X)$, given by $\Upsilon(x) = \sigma_x$, for all $x \in X$, is bijective. We denote the full subcategory of complete $\H$-sets by $\chset$.
        \end{definition}
        
        There is also an alternative description of complete $\H$-sets:
        
        \begin{prop}
            $\chset$ is isomorphic to the category whose objects are complete $\H$-sets and arrows are functions $f: X \to X'$ such that:
            \begin{enumerate}[topsep=0pt]
                \item $\delta(x,y) \leq \delta'(f(x),f(y))$;
                \item $\delta(x,x) = \delta'(f(x),f(x))$;
            \end{enumerate}
            for all $x,y \in X$. The composition is given by usual function composition, and the identity arrow is the identity function.
        \end{prop}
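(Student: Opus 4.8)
The plan is to exhibit a pair of functors between $\chset$ (with morphisms being $\H$-valued functional relations) and the proposed category $\chset'$ (with morphisms being functions $f\colon X\to X'$ satisfying the two conditions), check they are mutually inverse, and verify functoriality. Both categories have the same objects—complete $\H$-sets—so the isomorphism should be the identity on objects, and all the content is in the bijection on hom-sets.

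First I would construct the passage from an honest function $f\colon X\to X'$ satisfying conditions (1) and (2) to an $\H$-set morphism. The natural candidate is $\phi_f(x,x') \defeq \delta'(f(x),x')$. I would check the four axioms in the definition of a morphism of $\H$-sets: axioms (1) and (2) follow from the symmetry and transitivity of $\delta'$ together with the two hypotheses on $f$ (in particular condition (1), $\delta(x,y)\le\delta'(f(x),f(y))$, is what gives axiom (2)); axiom (3), $\phi_f(x,x')\wedge\phi_f(x,y')\le\delta'(x',y')$, is again transitivity and symmetry of $\delta'$; and axiom (4), $\bigvee_{z'}\phi_f(x,z')=\delta(x,x)$, reduces to $\bigvee_{z'}\delta'(f(x),z')=\delta'(f(x),f(x))=\delta(x,x)$, where the first equality holds because $\delta'(f(x),f(x))$ is the top value attained by the singleton $\sigma_{f(x)}$ and the second is condition (2). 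Conversely, given an $\H$-set morphism $\phi\colon\tuple{X,\delta}\to\tuple{X',\delta'}$ between \emph{complete} $\H$-sets, for each $x\in X$ the map $z'\mapsto\phi(x,z')$ is a singleton of $\tuple{X',\delta'}$ (axioms (3) and (2) of a morphism are exactly the two singleton axioms, using axiom (4) to see the relevant values behave correctly), so by completeness there is a unique $f_\phi(x)\in X'$ with $\phi(x,z')=\delta'(f_\phi(x),z')$ for all $z'$. This defines a function $f_\phi\colon X\to X'$, and one reads off conditions (1) and (2) from the morphism axioms (axiom (2) gives $\delta(x,y)\wedge\phi(x,y')\le\phi(y,y')$, which translates to $\delta(x,y)\le\delta'(f_\phi(x),f_\phi(y))$ after plugging in suitable $y'$ and using Proposition~\ref{prop:igualdade-morf-hsets}; axiom (4) gives condition (2)).

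Next I would verify that $f\mapsto\phi_f$ and $\phi\mapsto f_\phi$ are mutually inverse. Starting from $f$, the morphism $\phi_f(x,z')=\delta'(f(x),z')$ has associated singleton $\sigma_{f(x)}$, whose unique representing point is $f(x)$, so $f_{\phi_f}=f$. Starting from $\phi$, the function $f_\phi$ satisfies $\phi(x,z')=\delta'(f_\phi(x),z')=\phi_{f_\phi}(x,z')$ for all $x,z'$, hence $\phi_{f_\phi}=\phi$ by Proposition~\ref{prop:igualdade-morf-hsets} (in fact by direct equality). Then I would check that the correspondence respects identities and composition: $\phi_{\mathrm{id}_X}(x,y)=\delta'(x,y)=\delta(x,y)=\mathrm{id}_{\tuple{X,\delta}}(x,y)$, and for $f\colon X\to X'$, $g\colon X'\to X''$ one computes $(\phi_g\circ\phi_f)(x,x'')=\bigvee_{x'\in X'}\delta'(f(x),x')\wedge\delta''(g(x'),x'')$; using that $\bigvee_{x'}\delta'(f(x),x')\wedge(\,\cdot\,)$ "evaluates" the singleton at $f(x)$ (a standard computation: $\bigvee_{x'}\delta'(a,x')\wedge h(x')=h(a)$ when $h$ is suitably compatible, here $h(x')=\delta''(g(x'),x'')$ which is compatible because $g$ satisfies condition (1)), this collapses to $\delta''(g(f(x)),x'')=\phi_{g\circ f}(x,x'')$.

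The main obstacle I anticipate is the composition check, specifically the lemma that $\bigvee_{x'\in X'}\delta'(f(x),x')\wedge\delta''(g(x'),x'') = \delta''(g(f(x)),x'')$. One inequality ($\ge$, taking $x'=f(x)$ and using $\delta'(f(x),f(x))=\delta(x,x)\ge$ the right side—actually needing $\delta'(f(x),f(x))\wedge\delta''(g(f(x)),x'')=\delta''(g(f(x)),x'')$, which holds since $\delta''(g(f(x)),x'')\le\delta''(g(f(x)),g(f(x)))=\delta'(f(x),f(x))$ by the extent inequality) is easy; the other ($\le$) requires $\delta'(f(x),x')\wedge\delta''(g(x'),x'')\le\delta''(g(f(x)),x'')$ for every $x'$, which follows from condition (1) applied to $g$ (giving $\delta'(f(x),x')\le\delta''(g(f(x)),g(x'))$) and transitivity/symmetry of $\delta''$. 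The rest is routine lattice manipulation with the cHA axioms, and I would present it compactly rather than belaboring each inequality.
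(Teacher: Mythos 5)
The paper states this proposition without proof (it is background imported from \cite{Bel05} and \cite{Bor08c}), so there is no in-paper argument to compare against; your proposal is the standard proof via $\phi_f(x,x')\defeq\delta'(f(x),x')$ and the completeness/singleton correspondence, and it is correct, including the one genuinely non-routine step (the composition identity $\bigvee_{x'}\delta'(f(x),x')\wedge\delta''(g(x'),x'')=\delta''(g(f(x)),x'')$), which you handle properly using condition (1) for $g$ and the extent inequality $\delta''(a,b)\le\delta''(a,a)$. One small slip: the two singleton axioms for $z'\mapsto\phi(x,z')$ are morphism axioms (3) and (1), not (3) and (2); axiom (2) is the domain-side compatibility you correctly use afterwards to derive condition (1) for $f_\phi$.
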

        
        \begin{theorem}
            Let $\tuple{X,\delta}$ be an $\H$-set. Define the $\H$-set $\tuple{\sigma(X), \sigma(\delta)}$ where
            $$\sigma(\delta)(\rho,\tau) = \bigvee\limits_{x \in X} \rho(x) \wedge \tau(x), \text{ for all } (\rho,\tau) \in \sigma(X) \times \sigma(X)$$
            Then, $\tuple{\sigma(X),\sigma(\delta)}$ is complete and isomorphic to $\tuple{X,\delta}$.
        \end{theorem}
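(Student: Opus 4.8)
The plan is to split the statement into three tasks: (a) confirm that $\tuple{\sigma(X),\sigma(\delta)}$ really is an $\H$-set; (b) exhibit mutually inverse morphisms between it and $\tuple{X,\delta}$ in $\hset$; and (c) prove it is complete. Throughout I would first isolate two computational identities that drive everything: for any singleton $\rho$ of $\tuple{X,\delta}$ and any $x \in X$ one has $\sigma(\delta)(\rho,\sigma_x) = \rho(x)$, and consequently $\sigma(\delta)(\sigma_x,\sigma_y) = \delta(x,y)$. The first is proved by noting $\sigma(\delta)(\rho,\sigma_x) = \bigvee_{y} \rho(y)\wedge\delta(x,y)$, where each term is $\leq \rho(x)$ by singleton condition (2), and the term $y=x$ equals $\rho(x)$ because $\rho(x)\leq\delta(x,x)$ by singleton condition (1).

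For task (a): symmetry of $\sigma(\delta)$ is immediate. For transitivity, distribute the two suprema via the locale law to get $\sigma(\delta)(\rho,\tau)\wedge\sigma(\delta)(\tau,\mu) = \bigvee_{x,y}\rho(x)\wedge\tau(x)\wedge\tau(y)\wedge\mu(y)$, then bound each summand using singleton condition (1) for $\tau$ (giving $\tau(x)\wedge\tau(y)\leq\delta(x,y)$) followed by singleton condition (2) for $\rho$ (giving $\rho(x)\wedge\delta(x,y)\leq\rho(y)$), so each term is $\leq \rho(y)\wedge\mu(y)\leq\sigma(\delta)(\rho,\mu)$.

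For task (b): define $\phi\colon\tuple{X,\delta}\to\tuple{\sigma(X),\sigma(\delta)}$ and $\psi$ in the opposite direction by the same evaluation pairing, $\phi(x,\rho)=\psi(\rho,x)=\rho(x)$. Checking the four morphism axioms for each is routine: axioms (1) and (2) follow directly from the two singleton conditions, axiom (3) is trivial (take $y=x$ inside the supremum), and axiom (4) uses $\sigma_x(x)=\delta(x,x)$. Then $(\psi\circ\phi)(x,y)=\bigvee_{\rho}\rho(x)\wedge\rho(y)$, which is $\leq\delta(x,y)$ by singleton condition (1) and $\geq\delta(x,y)$ by taking $\rho=\sigma_x$; hence $\psi\circ\phi=id_{\tuple{X,\delta}}$. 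Dually $(\phi\circ\psi)(\rho,\tau)=\bigvee_x\rho(x)\wedge\tau(x)=\sigma(\delta)(\rho,\tau)$, so $\phi\circ\psi$ is the identity (one may also quote Proposition~\ref{prop:igualdade-morf-hsets} to shorten this). In particular $\tuple{\sigma(X),\sigma(\delta)}$ is isomorphic to $\tuple{X,\delta}$ in $\hset$.

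The main obstacle is task (c), specifically surjectivity of $\Upsilon\colon\sigma(X)\to\sigma(\sigma(X))$. Given a singleton $\Theta$ of $\tuple{\sigma(X),\sigma(\delta)}$, the candidate preimage is $\rho(x):=\Theta(\sigma_x)$; one checks $\rho\in\sigma(X)$ using the singleton conditions for $\Theta$ together with $\sigma(\delta)(\sigma_x,\sigma_y)=\delta(x,y)$. It then remains to show $\sigma(\delta)(\rho,\tau)=\Theta(\tau)$ for all $\tau$. The inequality $\leq$ uses $\tau(x)=\sigma(\delta)(\tau,\sigma_x)$ and singleton condition (2) for $\Theta$. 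The reverse inequality is the delicate point and is exactly where the locale distributivity is needed: one writes $\Theta(\tau)=\Theta(\tau)\wedge\sigma(\delta)(\tau,\tau)=\Theta(\tau)\wedge\bigvee_x\tau(x)=\bigvee_x\bigl(\Theta(\tau)\wedge\tau(x)\bigr)$ and uses singleton condition (2) for $\Theta$ once more to get $\Theta(\tau)\wedge\tau(x)\leq\Theta(\sigma_x)\wedge\tau(x)$, whence $\Theta(\tau)\leq\sigma(\delta)(\rho,\tau)$. Injectivity of $\Upsilon$ is comparatively easy: if $\sigma_\rho=\sigma_\tau$ then $\rho(x)=\sigma(\delta)(\rho,\sigma_x)=\sigma(\delta)(\tau,\sigma_x)=\tau(x)$ for every $x$, so $\rho=\tau$. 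This establishes completeness and finishes the proof.
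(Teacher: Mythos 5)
Your proof is correct and matches the paper's approach: the paper states this theorem without proof (deferring to its references) but records exactly the evaluation pairings $\phi(x,\rho)=\rho(x)$ and $\psi(\rho,x)=\rho(x)$ as the inverse isomorphisms, which are the ones you use. Your verification of the $\H$-set axioms, the morphism axioms, the compositions, and the completeness of $\tuple{\sigma(X),\sigma(\delta)}$ (via the key identity $\sigma(\delta)(\rho,\sigma_x)=\rho(x)$) correctly supplies the details the paper omits.
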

        
        The inverse isomorphisms $\phi: \tuple{X,\delta} \to \tuple{\sigma(X), \sigma(\delta)}$ e $\psi: \tuple{(\sigma(X), \sigma(\delta)} \to \tuple{X,\delta}$ are given by:
        $$\phi(x,\rho) = \rho(x), \text{ for all } (x,\rho) \in X \times \sigma(X)$$
        $$\psi(\rho,x) = \rho(x), \text{ for all } (\rho,x) \in \sigma(X) \times X$$
        
        \begin{corollary}
            There is an equivalence of categories: $\hset \simeq \chset$.
        \end{corollary}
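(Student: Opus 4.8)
The plan is to obtain the equivalence directly from the inclusion functor $\iota : \chset \hookrightarrow \hset$, by showing it is fully faithful and essentially surjective and then invoking the standard criterion that such a functor is one half of an equivalence of categories.

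Full faithfulness is immediate from the way $\chset$ was introduced: it is the \emph{full} subcategory of $\hset$ spanned by the complete $\H$-sets, so for complete $\H$-sets $\tuple{X,\delta}$ and $\tuple{X',\delta'}$ one has $\chset(\tuple{X,\delta},\tuple{X',\delta'}) = \hset(\tuple{X,\delta},\tuple{X',\delta'})$ and $\iota$ is the identity on these hom-sets. Essential surjectivity is exactly the content of the preceding theorem: for an arbitrary $\H$-set $\tuple{X,\delta}$, the pair $\tuple{\sigma(X),\sigma(\delta)}$ is complete, hence an object of $\chset$, and the maps $\phi(x,\rho)=\rho(x)$ and $\psi(\rho,x)=\rho(x)$ displayed right after that theorem are mutually inverse isomorphisms $\tuple{X,\delta}\cong\tuple{\sigma(X),\sigma(\delta)}$ in $\hset$. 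Thus every object of $\hset$ is isomorphic to one in the image of $\iota$.

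To assemble an actual quasi-inverse one runs the usual construction: pick, for each $\H$-set $A=\tuple{X,\delta}$, the completion $SA \defeq \tuple{\sigma(X),\sigma(\delta)}$ and the isomorphism $\eta_A : A \to \iota(SA)$ just described, and define $S$ on a morphism $f : A\to B$ by $S(f) \defeq \eta_B \circ f \circ \eta_A^{-1}$ (composition taken in $\hset$, landing in $\chset$ by fullness). Then $S$ is a functor, $\eta$ is a natural isomorphism $\mathrm{id}_{\hset}\cong \iota\circ S$, and a routine check gives a natural isomorphism $S\circ\iota\cong \mathrm{id}_{\chset}$; hence $\iota$ and $S$ witness $\hset\simeq\chset$.

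The only real work lies in checking that $S$ is a well-defined functor, which by Proposition \ref{prop:igualdade-morf-hsets} and the explicit composition formula for morphisms of $\H$-sets is a short computation rather than a genuine difficulty; one also makes the mild, standard use of choice in selecting the isomorphisms $\eta_A$. An alternative that avoids even this is to define the completion functor $\sigma$ on morphisms explicitly, by $\sigma(\phi)(\rho,\tau') = \bigvee_{x\in X,\,x'\in X'} \rho(x)\wedge\phi(x,x')\wedge\tau'(x')$ for $\phi:\tuple{X,\delta}\to\tuple{X',\delta'}$, verify the four axioms of a morphism of $\H$-sets and functoriality by hand, and read off from the preceding theorem that $\phi$ and $\psi$ furnish natural isomorphisms $\mathrm{id}_{\hset}\cong \iota\circ\sigma$ and $\sigma\circ\iota\cong\mathrm{id}_{\chset}$; this is longer but completely self-contained.
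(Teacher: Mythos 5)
Your proof is correct and follows the same route the paper intends: the corollary is drawn from the preceding theorem precisely because the inclusion of the full subcategory $\chset$ is fully faithful and, by the completion $\tuple{\sigma(X),\sigma(\delta)}\cong\tuple{X,\delta}$ with the displayed inverse morphisms $\phi,\psi$, essentially surjective, hence an equivalence. The additional detail on constructing the quasi-inverse (and the explicit formula for $\sigma$ on morphisms) is a correct elaboration of what the paper leaves implicit.
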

        
        We can thereby define the functor $\Gamma: \Sh{\H} \to \chset$ by:
        \begin{multline*}
            \Gamma(F) = \tuple{X_F, \delta_F}, \text{ for every sheaf } F \text{ on } \H, \text{ where } X_F \defeq \coprod\limits_{a \in \H} F(a) \text{ and}\\
            \delta_F \text{ is given by } \tuple{(s,b),(t,c)} \mapsto \bigvee \set{ d \leq b \wedge c \mid s\restriction^b_d = t\restriction^c_d }
        \end{multline*}
        \begin{multline*}
            \Gamma(\eta): \Gamma(F) \to \Gamma(G), \text{ for every natural transformation } \eta: F \RAR G\\
            \text{in } \Sh{\H}, \text{where } \Gamma(\eta)(s,b) = (\eta_b(s),b), \text{ for all } (s,b) \in \Gamma(F)
        \end{multline*}
        
        \begin{theorem}
            The functor $\Gamma: \Sh{\H} \to \chset$ defined above is fully faithful, and for all complete $\H$-set $\tuple{X, \delta}$ there exists a sheaf $F$ on $\H$ such that $\tuple{X, \delta} \cong \Gamma(F)$. Therefore, $\Gamma$ defines an equivalence of categories $\Sh{\H} \simeq \chset$.
        \end{theorem}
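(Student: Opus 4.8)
The plan is to verify that $\Gamma$ is a well-defined functor into $\chset$, then that it is full and faithful, then that it is essentially surjective; the stated equivalence follows formally from these three facts.

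\textbf{Well-definedness.} First I would record that for a sheaf $F$ and sections $s\in F(b)$, $t\in F(c)$ the set $\{d\le b\wedge c\mid s\restriction^b_d=t\restriction^c_d\}$ is closed under arbitrary joins — if $s$ and $t$ restrict to the same section over each $d_i$, then by the separation half of the sheaf axiom they restrict to the same section over $\bigvee_i d_i$ — so $\delta_F(\tuple{(s,b),(t,c)})$ is in fact attained as a maximum. Symmetry of $\delta_F$ is then immediate and transitivity follows from functoriality of restriction, so $\tuple{X_F,\delta_F}$ is an $\H$-set. To see it is complete, take a singleton $\sigma$ of $\tuple{X_F,\delta_F}$: the singleton axioms give $\sigma(s,b)\le\delta_F((s,b),(s,b))=b$, the family $\{s\restriction^b_{\sigma(s,b)}\}$ is compatible over the cover $\{\sigma(s,b)\}$ of $c\defeq\bigvee_{(s,b)}\sigma(s,b)$ (on overlaps use $\sigma(s,b)\wedge\sigma(s',b')\le\delta_F((s,b),(s',b'))$ together with the maximality just established), and gluing it to a section $t\in F(c)$ one checks $\sigma=\sigma_{(t,c)}$, uniqueness of $(t,c)$ being another separation argument. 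Finally, for $\eta\colon F\RAR G$ a short computation shows $\Gamma(\eta)$ satisfies the two conditions of the function-description of $\chset$, and functoriality of $\Gamma$ is transparent.

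\textbf{Full faithfulness.} Faithfulness is immediate: $\Gamma(\eta)=\Gamma(\eta')$ forces $\eta_b(s)=\eta'_b(s)$ for every $b$ and $s\in F(b)$. For fullness, take a $\chset$-morphism $g\colon\Gamma(F)\to\Gamma(G)$, i.e.\ a function $X_F\to X_G$ with $\delta_F\le\delta_G\circ(g\times g)$ and $\delta_F(x,x)=\delta_G(g(x),g(x))$. The second condition forces the second coordinate of $g(s,b)$ to equal $b$, so $g(s,b)=(\eta_b(s),b)$ for a unique $\eta_b(s)\in G(b)$; applying the inequality $\delta_F\le\delta_G\circ(g\times g)$ to the pair $(s,b),(s\restriction^b_d,d)$, whose $\delta_F$-value is exactly $d$, gives $\eta_b(s)\restriction^b_d=\eta_d(s\restriction^b_d)$, which is the naturality of $\eta$. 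Then $\Gamma(\eta)=g$.

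\textbf{Essential surjectivity.} Given a complete $\H$-set $\tuple{X,\delta}$, set $E(x)\defeq\delta(x,x)$ and, for $d\le E(x)$, let $x\wedge d\in X$ be the unique element with singleton $y\mapsto\delta(x,y)\wedge d$. Define a presheaf $F$ by $F(a)=\{x\in X\mid E(x)=a\}$ with restriction $x\restriction^a_d=x\wedge d$; the presheaf identities reduce to the singleton computations $x\wedge E(x)=x$ and $(x\wedge d)\wedge e=x\wedge e$. The substantive point is the sheaf condition: given a cover $a=\bigvee_i a_i$ and a compatible family $x_i\in F(a_i)$, one shows that the element $x$ with singleton $y\mapsto\bigvee_i\delta(x_i,y)$ (this is a singleton thanks to compatibility) satisfies $E(x)=a$ and $x\wedge a_i=x_i$, uniqueness being a completeness argument. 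Finally $\tuple{X,\delta}\cong\Gamma(F)$ via the bijection $x\mapsto(x,E(x))$ between $X$ and $X_F=\coprod_a F(a)$; that this carries $\delta$ to $\delta_F$ is the identity $\delta(x,y)=\bigvee\{d\le E(x)\wedge E(y)\mid x\wedge d=y\wedge d\}$, proved by checking that $d=\delta(x,y)$ lies in the right-hand set (each direction of the singleton equality being an instance of transitivity of $\delta$) and that conversely any such $d$ satisfies $d=\delta(x,y)\wedge d\le\delta(x,y)$ on evaluating the two equal singletons at $x$.

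\textbf{Main obstacle.} The conceptually routine but genuinely fiddly work is concentrated in this last step, together with the completeness of $\Gamma(F)$ for an arbitrary sheaf: all of it runs on careful use of the infinite distributive law in $\H$ and on both halves of the sheaf axiom. One could shortcut essential surjectivity by splicing a direct equivalence $\Sh{\H}\simeq\hset$ onto the already-established $\hset\simeq\chset$, but carrying the argument out through $\Gamma$ directly keeps all the data explicit.
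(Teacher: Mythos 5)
Your argument is correct, and it is worth noting up front that the paper itself offers no proof of this theorem: it is stated as a known fact in the background section, with the reader referred to Bell's and Borceux's books. So there is no "paper proof" to diverge from; what you have written is a self-contained argument along the standard lines one finds in those references. The three key observations you isolate are exactly the ones that make the proof work: (i) the set $\set{d \leq b \wedge c \mid s\restriction^b_d = t\restriction^c_d}$ is closed under joins by the separation half of the sheaf axiom, so $\delta_F$ is attained as a maximum (this is what powers both the completeness of $\Gamma(F)$ and the naturality step in fullness); (ii) in the function description of $\chset$ the condition $\delta_F(x,x) = \delta_G(g(x),g(x))$ pins down the second coordinate, reducing fullness to a one-line naturality check against the pair $\tuple{(s,b),(s\restriction^b_d,d)}$; and (iii) the quasi-inverse construction $F(a) = \set{x \mid \delta(x,x) = a}$ with restriction given by the "truncated singleton" $y \mapsto \delta(x,y)\wedge d$, which uses completeness of $\tuple{X,\delta}$ to realize these singletons as elements. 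I checked the points you leave as "one checks" --- in particular that $\sigma(t,c) = c$ in the completeness argument (needed for the reverse inequality $\delta_F((s,b),(t,c)) \leq \sigma(s,b)$), that compatibility of the family $\set{x_i}$ yields $a_i \wedge a_j \leq \delta(x_i,x_j)$, and the identity $\delta(x,y) = \bigvee\set{d \mid x\wedge d = y \wedge d}$ --- and they all go through, the last two using the infinite distributive law exactly where you say they do. One could indeed shortcut essential surjectivity via the already-stated equivalences $\Sh{\H} \simeq \hset \simeq \chset$, but your direct construction is cleaner and matches the explicit style of the surrounding section.
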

        
        Finally, to show the equivalence between $\hset$ and $\Seth{\H}$\footnote{Here we follow \cite{Bel05}, but we provide a more complete and accurate description.}, we will need two constructions on $\V{\H}$:
        
        
        Let $\tuple{X, \delta}$ be an $\H$-set. For each $x \in X$, define $\dot{x} \in \V{\H}$ as:
        $$\dom{\dot{x}} \defeq \set{\hat{z} \mid z \in X} \quad \text{ and } \quad \dot{x}(\hat{z}) \defeq \delta(x,z), \text{ for all } z \in X$$
        Then, define $X^\dagger \in \V{\H}$ as
        $$\dom{X^\dagger} \defeq \set{\dot{x} \mid x \in X} \quad \text{ and } \quad X^\dagger(\dot{x}) \defeq \delta(x,x), \text{ for all } x \in X$$
        Similarly, given a morphism $\phi: \tuple{X,\delta} \to \tuple{X',\delta'}$ of $\H$-sets, we may consider $\varphi^\dagger \in \V{\H}$ given by:
        $$\dom{\phi^\dagger} \defeq \set{ \tupleh{ \dot{x},\dot{x}' } \ \middle| \ x \in X, x' \in X' }$$
        $$\phi^\dagger\left( \tupleh{ \dot{x},\dot{x}' } \right) \defeq \phi(x,x'), \text{ for all } x \in X, x' \in X'$$
        Since $\V{\H} \models \fun{\phi^\dagger}$, we may define a functor $\Phi: \hset \to \Seth{\H}$ by taking $\Phi(X,\delta) = \left[X^\dagger\right]$, for every $\H$-set $\tuple{X,\delta}$, and $\Phi(\phi) = \phi^\dagger$, for every arrow $\phi \in \Arr{\hset}$.
        
        On the other hand, given $u \in \V{\H}$, define $X_u \defeq \dom{u}$ and $\delta_u: X_u \times X_u \to \H$ as
        $$\delta_u(x,y) \defeq \bvalue{x \in u} \wedge \bvalue{x = y } \wedge \bvalue{y \in u}, \text{ for all } x,y \in X_u$$
        Observe, however, that $\bvalue{ u = u' } = 1$ does not imply $X_u = \dom{u} = \dom{u'} = X_{u'}$, and that we may not define a $\H$-set using $[\dom{u}]$ since this class is not a set (later we will show that $\set{u' \in \V{H} \mid \bvalue{ u = u' } = 1}$ is a proper class). In that case, we will use Scott's trick to define a functor $\Psi: \Seth{\H} \to \hset$.
        
        Firstly, if $\bvalue{ u = u' } = 1$, then $\tuple{X_u, \delta_u} \cong \tuple{X_{u'}, \delta_{u'}}$. Indeed, define $\lambda_{u,u'}: \tuple{X_u, \delta_u} \to \tuple{X_{u'}, \delta_{u'}}$ such that
        $$\lambda_{u,u'}(x,x') \defeq \bvalue{ x \in u } \wedge \bvalue{ x = x' } \wedge \bvalue{ x' \in u' }, \text{ for all } x \in \dom{u}, x' \in \dom{u'}$$
        We verify this is a morphism of $\H$-sets. Let $x,y \in X_u$ and $x',y' \in X_{u'}$.
        
        \begin{enumerate}
            \item $\delta_{u'}(x',y') \wedge \lambda_{u,u'}(x,y') \leq \lambda_{u,u'}(x,x')$. Indeed,
            \begin{align*}
                \delta_{u'}(x',y') &\wedge \lambda_{u,u'}(x,y') =\\
                 &= \bvalue{ x' \in u' } \wedge \bvalue{ x' = y' } \wedge \bvalue{ y' \in u' } \wedge \bvalue{ x \in u } \wedge \bvalue{ x = y' } \wedge \bvalue{ y' \in u' } \\
                 &\leq \bvalue{ x' \in u' } \wedge \bvalue{ x' = y' } \wedge \bvalue{ x \in u } \wedge \bvalue{ x = y' } \\
                 &\leq \bvalue{ x \in u } \wedge \bvalue{ x = x' } \wedge \bvalue{ x' \in u' } \\
                 &= \lambda_{u,u'}(x,x')
            \end{align*}
            \item $\delta_u(x,y) \wedge \lambda_{u,u'}(x,y') \leq \lambda_{u,u'}(y,y')$. Indeed,
            \begin{align*}
                \delta_u(x,y) &\wedge \lambda_{u,u'}(x,y') = \\
                 &= \bvalue{ x \in u } \wedge \bvalue{ x = y } \wedge \bvalue{ y \in u } \wedge \bvalue{ x \in u } \wedge \bvalue{ x = y' } \wedge \bvalue{ y' \in u' } \\
                 &\leq \bvalue{ x = y } \wedge \bvalue{ y \in u } \wedge \bvalue{ x = y' } \wedge \bvalue{ y' \in u' } \\
                 &\leq \bvalue{ y \in u } \wedge \bvalue{ y = y' } \wedge \bvalue{ y' \in u' } \\
                 &= \lambda_{u,u'}(y,y')
            \end{align*}
            \item $\lambda_{u,u'}(x,x') \wedge \lambda_{u,u'}(x,y') \leq \delta_{u'}(x',y')$. Indeed,
            \begin{align*}
                \lambda_{u,u'}(x,x') &\wedge \lambda_{u,u'}(x,y') = \\
                 &= \bvalue{ x \in u } \wedge \bvalue{ x = x' } \wedge \bvalue{ x' \in u' } \wedge \bvalue{ x \in u } \wedge \bvalue{ x = y' } \wedge \bvalue{ y' \in u' } \\
                 &\leq \bvalue{ x = x' } \wedge \bvalue{ x' \in u' } \wedge \bvalue{ x = y' } \wedge \bvalue{ y' \in u' } \\
                 &\leq \bvalue{ x' \in u' } \wedge \bvalue{ x' = y' } \wedge \bvalue{ y' \in u' } \\
                 &= \delta_{u'}(x',y')
            \end{align*}
            \item $\bigvee\limits_{z' \in X_{u'}} \lambda_{u,u'}(x,z') = \delta_u(x,x)$. Indeed, using that $\delta_u(x,x) = \bvalue{ x \in u }$,
            \begin{itemize}
                \item on one hand, for every $z' \in X_{u'}$,
                $$\lambda_{u,u'}(x,z') = \bvalue{ x \in u } \wedge \bvalue{ x = z' } \wedge \bvalue{ z' \in u' } \leq \bvalue{ x \in u }$$
                Therefore, $\bigvee\limits_{z' \in X_{u'}} \lambda_{u,u'}(x,z') \leq \delta_u(x,x)$;
                \item on the other hand, for every $z \in X_{u'}$,
                \begin{multline*}
                    u'(z') \wedge \bvalue{ z' = x } = \bvalue{ x = z'} \wedge u'(z') \wedge \bvalue{ z' = z' } \leq \\
                    \leq \bvalue{ x = z'} \wedge \left( \bigvee\limits_{t' \in X_{u'}} u'(t') \wedge \bvalue{ t' = z' } \right) = \bvalue{ x = z'} \wedge \bvalue{ z' \in u' }
                \end{multline*}
                Thus,
                $$\bvalue{ x \in u' } = \bigvee\limits_{z' \in X_{u'}} u'(z') \wedge \bvalue{ z' = x } \leq \bigvee\limits_{z' \in X_{u'}} \bvalue{ x = z'} \wedge \bvalue{ z' \in u' }$$
                But observe that $\bvalue{ u = u' } = 1$ implies $\bvalue{ x \in u } = \bvalue{ x \in u' }$, so:
                \begin{align*}
                    \bvalue{ x \in u } &= \bvalue{ x \in u } \wedge \bvalue{ x \in u' } \\
                     &\leq \bvalue{ x \in u } \wedge \left( \bigvee\limits_{z' \in X_{u'}} \bvalue{ x = z'} \wedge \bvalue{ z' \in u' } \right) \\
                     &= \bigvee\limits_{z' \in X_{u'}} \bvalue{ x \in u } \wedge \bvalue{ x = z'} \wedge \bvalue{ z' \in u' }
                \end{align*}
                That is, $\delta_u(x,x) \leq \bigvee\limits_{z' \in X_{u'}} \lambda_{u,u'}(x,z')$.
            \end{itemize}
        \end{enumerate}
        
        Finally, we verify that $\lambda_{u,u'}$ is an isomorphism, with inverse morphism $\lambda_{u,u'}^{-1} = \lambda_{u',u}: \tuple{X_{u'},\delta_{u'}} \to \tuple{X_u, \delta_u}$.
        For all $x,y \in X_u$, 
        \begin{multline*}
            (\lambda_{u',u} \circ \lambda_{u,u'})(x,y) = \bigvee\limits_{x' \in X_{u'}} \lambda_{u,u'}(x,x') \wedge \lambda_{u',u}(x',y) = \\
            = \bigvee\limits_{x' \in X_{u'}} \bvalue{ x \in u } \wedge \bvalue{ x = x' } \wedge \bvalue{ x' \in u' } \wedge \bvalue{ y \in u } \wedge \bvalue{ y = x' } \wedge \bvalue{ x' \in u' } \leq \\
            \leq \bvalue{ x \in u } \wedge \bvalue{ x = y } \wedge \bvalue{ y \in u } = \delta_u(x,y)
        \end{multline*}
        Therefore, using Proposition \ref{prop:igualdade-morf-hsets}, we conclude that $\lambda_{u',u} \circ \lambda_{u,u'} = id_{\tuple{X,\delta}}$. Analogously, it can be verified that $\lambda_{u,u'} \circ \lambda_{u',u} = id_{\tuple{X',\delta'}}$.
        
        Now, for each $[u] \in \Seth{\H}$, let $I^{[u]}$ be the category given by:
        $$\Obj{I^{[u]}} \defeq [u]_m \qquad \qquad \Arr{I^{[u]}} \defeq [u]_m \times [u]_m$$
        where $[u]_m$ is the equivalence class of the elements with minimum rank. Consider the functor $F^{[u]}: I^{[u]} \to \hset$ such that
        $$F^{[u]}(u') \defeq \tuple{X_u,\delta_u}, \text{ for all } u' \in [u]_m$$
        $$F^{[u]}(u',u'') \defeq \lambda_{u',u''}: \tuple{X_{u'}, \delta_{u'}} \to \tuple{X_{u''}, \delta_{u''}}, \text{ for all } u', u'' \in [u]_m$$
        At last, we may define the functor $\Psi: \Seth{\H} \to \hset$ as $\Psi([u]) = \lim\limits_{u' \in [u]_m} F^{[u]}(u')$.
        
        This functor can also be described more explicitly. The product of a family of $\H$-sets $\set{\tuple{X_i, \delta_i} \mid i \in I}$ is given by $\tuple{P, \delta}$, where the set is simply the Cartesian product $P = \prod\limits_{i \in I} X_i$ and $\delta: P \times P \to \H$ is given by:
        $$\delta\left( \tuple{x_i}_{i \in I}, \tuple{x_i'}_{i \in I} \right) = \bigwedge\limits_{i \in I} \delta(x,x')$$
        The projections $\pi_j: P \times X_j \to \H$ are given by
        $$\pi_j \left(\tuple{x_i}_{i \in I}, x'_j \right) = \delta_j(x_j,x'_j)$$
        for each $j \in I$ (see \cite{Bor08c}, exercise 2.13.15). The equalizer of two morphisms $\phi, \psi: \tuple{X,\delta} \to \tuple{X', \delta'}$ of $\H$-sets is $\tuple{X,\tau}$, where
        $$\tau(x,y) = \bigvee\limits_{x' \in X'} \phi(x,x') \wedge \psi(y,x')$$
        (see \cite{Bor08c}, exercise 2.13.16).
        
        We can then use the construction of limits by products and equalizers (see \cite{Bor08a}, Theorem 2.8.1), denoting $\Psi([u])$ by $\lim F^{[u]}$:
        \begin{diagram}
             & & & \tuple{X_{u''}, \delta_{u''}} & \\
             & &\ruTo(1,2)^{\pi'_{u''}}  & & \luTo(1,2)^{\pi''_{(u',u'')}} \\
            \lim F^{[u]} & \rTo^{\quad E \quad} & \prod\limits_{u' \in [u]_m} \tuple{X_{u'}, \delta_{u'}} & \pile{\rTo^\alpha \\ \rTo_\beta} & \prod\limits_{(u',u'') \in I^{[u]}_1} \tuple{X_{u''}, \delta_{u''}} \\
             & & \dTo<{\pi'_{u'}} & & \dTo>{\pi''_{(u',u'')}} \\
             & & \tuple{X_{u'}, \delta_{u'}} & \rTo^{\qquad \lambda_{u',u''} \qquad} & \tuple{X_{u''}, \delta_{u''}}
        \end{diagram}
        where $\pi', \pi''$ are the projections (of the corresponding products) and $\tuple{\lim F, E}$ is the equalizer of $\alpha$ and $\beta$, which are the morphisms that make the diagram commute. That is, 
        $$\pi''_{(u',u'')} \circ \alpha = \pi'_{u''} \qquad \qquad \qquad \pi''_{(u',u'')} \circ \beta = \lambda_{u',u''} \circ \pi'_{u'}$$
        
        We can proceed similarly for the arrows of the category. For each $f \in \V{\H}$ such that $\bvalue{\fun{f: u \to v}} = 1$, define $\lambda_f: \tuple{X_u, \delta_u} \to \tuple{X_v, \delta_v}$ as:
        $$\lambda_f(x,y) = \bvalue{ x \in u } \wedge \bvalue{ (x,y) \in f } \wedge \bvalue{ y \in v }$$
        Now, given $f' \in \V{\H}$ such that $\bvalue{\fun{f': u' \to v'}} = 1$ and $\bvalue{f = f'} = 1$ (which already implies $u \equiv u'$ and $v \equiv v'$), we obtain the following commutative diagram:
        \begin{diagram}
            \tuple{X_u, \delta_u} & \rIso^{\quad \lambda_{u,u'} \quad} & \tuple{X_{u'}, \delta_{u'}} \\
            \dTo<{\lambda_f} & & \dTo>{\lambda_{f'}} \\
            \tuple{X_v, \delta_v} & \rIso^{\lambda_{v,v'}} & \tuple{X_{v'}, \delta_{v'}}
        \end{diagram}
        Thus, we may define an arrow $\Psi([f]): \lim\limits_{u' \in [u]_m} F(u') \to \lim\limits_{v' \in [v]_m} F(v')$.
        
        \begin{theorem}\label{teo:equiv-hset-quo}
            The functors $\Phi,\Psi$ constructed above define an equivalence of categories: $\hset \simeq \Seth{\H}$.
        \end{theorem}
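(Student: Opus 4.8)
The plan is to produce natural isomorphisms $\eta\colon\mathrm{id}_{\hset}\RAR\Psi\circ\Phi$ and $\varepsilon\colon\Phi\circ\Psi\RAR\mathrm{id}_{\Seth{\H}}$; equivalently, and this is what the computations actually deliver, to show that $\Phi$ is fully faithful and essentially surjective with $\Psi$ a quasi-inverse. It is convenient to first replace $\Psi$ by a strict version. For a fixed class $[u]$, the category $I^{[u]}$ is the (nonempty) indiscrete category on the \emph{set} $[u]_m$ — nonempty because $[u]$ has an element of least rank — the functor $F^{[u]}$ sends $u'$ to $\tuple{X_{u'},\delta_{u'}}$ and every arrow to one of the isomorphisms $\lambda_{u',u''}$, and the family $\set{\lambda_{u',u''}}$ satisfies the cocycle identities $\lambda_{u'',u'''}\circ\lambda_{u',u''}=\lambda_{u',u'''}$ and $\lambda_{u',u'}=\mathrm{id}$, proved from Proposition~\ref{prop:igualdade-morf-hsets} exactly as in the verification already made that $\lambda_{u',u}$ inverts $\lambda_{u,u'}$. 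A limit of a nonempty indiscrete diagram of isomorphisms is canonically isomorphic to any one of its vertices, so composing such a projection with a $\lambda$ yields, for every representative $u$ of $[u]$, a canonical isomorphism $\Psi([u])\cong\tuple{X_u,\delta_u}$; the cocycle identities make these compatible with the $\lambda$'s and with the commuting squares that define $\Psi$ on arrows, so that under them $\Psi([f])$ becomes $\lambda_f$. Hence $\Psi$ is naturally isomorphic to the functor $[u]\mapsto\tuple{X_u,\delta_u}$, $[f]\mapsto\lambda_f$ (fixing a representative of each class; one may instead carry the canonical isomorphisms around), and it suffices to build $\eta,\varepsilon$ for the latter, which we rename $\Psi$.

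For the unit, I would first record the auxiliary identities, each proved by a short computation in $\H$ from the axioms of an $\H$-set (the first also needing an $\in$-induction): $\bvalue{\hat w=\hat z}\in\set{0,1}$, equal to $1$ iff $w=z$; $\bvalue{\hat z\in\dot x}=\delta(x,z)$; $\bvalue{\dot x\in X^\dagger}=\delta(x,x)$; $\delta(x,y)\leq\bvalue{\dot x=\dot y}$ and $\bvalue{\dot x=\dot y}\wedge\delta(y,y)\leq\delta(x,y)$; and $\bvalue{\tupleh{\dot x,\dot x'}\in\phi^\dagger}=\phi(x,x')$ for a morphism $\phi$ of $\H$-sets. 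These give $\delta_{X^\dagger}(\dot x,\dot y)=\delta(x,x)\wedge\bvalue{\dot x=\dot y}\wedge\delta(y,y)=\delta(x,y)$, so $\Psi\Phi\tuple{X,\delta}$ is $\tuple{\set{\dot x\mid x\in X},\,(\dot x,\dot y)\mapsto\delta(x,y)}$. I would take $\eta_{\tuple{X,\delta}}$ to be the morphism of $\H$-sets $(x,\dot y)\mapsto\delta(x,y)$: it is well defined because $\dot y=\dot y'$ forces $\delta(x,y)=\delta(x,y')$ for all $x$, the four morphism axioms are immediate, and it is an isomorphism with inverse $(\dot y,x)\mapsto\delta(y,x)$, since both composites reduce to $(x,y)\mapsto\bigvee_z\delta(x,z)\wedge\delta(z,y)=\delta(x,y)$. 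Naturality in $\tuple{X,\delta}$ is the equality of $\eta_{X'}\circ\phi$ and $\Psi\Phi(\phi)\circ\eta_X=\lambda_{\phi^\dagger}\circ\eta_X$, both of which compute to $(x,\dot x')\mapsto\phi(x,x')$ by the morphism axioms and the last auxiliary identity. The same identity already yields faithfulness of $\Phi$: if $\bvalue{\phi^\dagger=\psi^\dagger}=1$ then $\phi(x,x')=\bvalue{\tupleh{\dot x,\dot x'}\in\phi^\dagger}\leq\bvalue{\tupleh{\dot x,\dot x'}\in\psi^\dagger}=\psi(x,x')$ for all $x,x'$, hence $\phi=\psi$ by Proposition~\ref{prop:igualdade-morf-hsets}.

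For the counit, $\Phi\Psi([u])=[(X_u)^\dagger]$, so everything rests on the identity $\bvalue{(X_u)^\dagger=u}=1$ in $\V{\H}$; granting it, $\varepsilon_{[u]}\colon[(X_u)^\dagger]\to[u]$ is forced to be the identity arrow of $\Seth{\H}$, and naturality becomes the parallel identity $\bvalue{\lambda_f^\dagger=f}=1$ whenever $\bvalue{\fun{f\colon u\to v}}=1$. To prove $\bvalue{(X_u)^\dagger=u}=1$ one unwinds the valuation of equality: $(X_u)^\dagger$ has domain $\set{\dot x\mid x\in\dom u}$ with $(X_u)^\dagger(\dot x)=\delta_u(x,x)=\bvalue{x\in u}$ and $\dot x(\hat z)=\delta_u(x,z)$, and one verifies that each implication $u(y)\rar\bvalue{y\in(X_u)^\dagger}$ (for $y\in\dom u$) and $(X_u)^\dagger(\dot x)\rar\bvalue{\dot x\in u}$ (for $x\in\dom u$) equals $1$, using items (2), (6), (7) of the Properties of Formula valuation together with the $\H$-set axioms of $\delta_u$. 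This is precisely the computation carried out for $\V{\B}$ in~\cite{Bel05}, and it goes through verbatim over an arbitrary complete Heyting algebra, since no step eliminates $\rar$ in favour of a complement. Fullness of $\Phi$ comes out of the same material run backwards: a preimage of an arrow $[g]\colon[X^\dagger]\to[X'^\dagger]$ is the morphism of $\H$-sets $(x,x')\mapsto\bvalue{\tupleh{\dot x,\dot x'}\in g}$, which is a morphism because $\V{\H}\models\fun{g}$ and satisfies $\bvalue{\phi^\dagger=g}=1$.

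Putting the pieces together, $\eta$ and $\varepsilon$ exhibit $\Phi$ and $\Psi$ as quasi-inverse functors, whence $\hset\simeq\Seth{\H}$ (one could further normalise $\eta,\varepsilon$ to meet the triangle identities, but this is not needed). The step I expect to be the main obstacle is the counit: establishing $\bvalue{(X_u)^\dagger=u}=1$ and its naturality counterpart, together with the auxiliary identities of the second paragraph linking $\hat{(\,\cdot\,)}$, $\dot{(\,\cdot\,)}$ and $\dagger$ to $\delta$. These are the only places where nested suprema, infima and Heyting implications must be handled with care; by contrast, the limit bookkeeping of the first paragraph and the lattice inequalities elsewhere are entirely routine.
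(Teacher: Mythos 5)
Your overall architecture --- a unit isomorphism $\mathrm{id}_{\hset}\RAR\Psi\Phi$ computed at the level of $\H$-sets, full faithfulness of $\Phi$ via the identity $\bvalue{\tupleh{\dot x,\dot x'}\in\phi^\dagger}=\phi(x,x')$, and a counit handling essential surjectivity --- is the standard one (the paper states this theorem without proof, deferring to Bell), and your unit computation $\delta_{X^\dagger}(\dot x,\dot y)=\delta(x,y)$ together with the auxiliary identities is correct. The genuine gap is the counit: the identity $\bvalue{(X_u)^\dagger=u}=1$ on which you say ``everything rests'' is simply false, so $\varepsilon_{[u]}$ is \emph{not} forced to be an identity arrow, and this is not ``the computation carried out in Bell'' --- Bell produces an isomorphism in the category, not an equality of names. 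Concretely, take $u=\widehat{\set{\emptyset}}=\set{\tuple{\emptyset,1}}$. Then $X_u=\set{\emptyset}$, $\delta_u(\emptyset,\emptyset)=1$, $\dot\emptyset=\set{\tuple{\hat\emptyset,1}}=u$, hence $(X_u)^\dagger=\set{\tuple{u,1}}=\widehat{\set{\set{\emptyset}}}$; since by your own first auxiliary identity check-names reflect equality, $\bvalue{(X_u)^\dagger=u}=0$. (The first conjunct of the equality value is $u(\emptyset)\rar\bvalue{\emptyset\in(X_u)^\dagger}=1\rar\bvalue{\emptyset=\dot\emptyset}=0$.) In general $(X_u)^\dagger$ is a name for the set of the $\dot x$'s and sits one ``layer'' above $u$; the companion claim $\bvalue{\lambda_f^\dagger=f}=1$ fails for the same reason.

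What is true, and what the proof needs, is that $[(X_u)^\dagger]\cong[u]$ \emph{in the category} $\Seth{\H}$: the counit must be the nontrivial arrow $[h_u]$ where $h_u\in\V{\H}$ has $\dom{h_u}=\set{\tupleh{\dot x,x}\mid x\in\dom{u}}$ and $h_u(\tupleh{\dot x,x})=\bvalue{x\in u}$. One then checks $\bvalue{\fun{h_u:(X_u)^\dagger\to u}}=1$, that the converse relation is also functional with value $1$ (functionality of $h_u$ uses your inequality $\bvalue{\dot x=\dot y}\wedge\delta_u(y,y)\leq\delta_u(x,y)\leq\bvalue{x=y}$, and functionality of the converse uses $\delta_u(x,y)\leq\bvalue{\dot x=\dot y}$), so $h_u$ is an isomorphism in the topos $\Seth{\H}$, and finally naturality in the corrected form $\bvalue{h_v\circ\lambda_f^\dagger=f\circ h_u}=1$ whenever $\bvalue{\fun{f:u\to v}}=1$. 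Equivalently, since you already have $\Phi$ fully faithful, exhibiting $h_u$ as an isomorphism $\Phi(X_u,\delta_u)\to[u]$ gives essential surjectivity and the equivalence follows. The rest of your argument --- the reduction of $\Psi$ to a chosen vertex of the indiscrete diagram via the cocycle identities, faithfulness, and fullness via $(x,x')\mapsto\bvalue{\tupleh{\dot x,\dot x'}\in g}$ --- is sound.
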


\section{Induced morphisms in Heyting valued models}\label{sec:induced}



    Previously, we saw (see Definition \ref{immer-def}) an injection $V\to\V\B$ given by $\hat\cdot$ which preserves the truth values of $\Sigma_1$ formulas (see Corollary \ref{immer-cor}). Currently, it is known that if $\phi: \A\to\B$ is a complete and injective morphism of Heyting algebras, we can define a map $\tilde\phi:\V\A\to\V\B$ that is injective and such that: for all $x,y\in\V\A$,
    
	$$ \phi\bvalue{x =  y}_\A=\bvalue{\tilde\phi(x) = \tilde\phi(y)}_\B$$
	$$ \phi\bvalue{x\in y}_\A=\bvalue{\tilde\phi(x)\in\tilde\phi(y)}_\B$$

    For $\Delta_0$ formulas, the equality, trivially, still holds. One gets the following inequality for any $\Sigma_1$ formula $\psi$:

	$$ \phi\bvalue{\psi(x_1,\cdots, x_n)}_\A\leq\bvalue{\psi(\tilde\phi(x_1),\cdots,\tilde\phi(x_n))}_\B$$
    
    It is relatively straightforward to relax these conditions to injective functions that preserve only arbitrary suprema and finite infima\footnote{
		Geometric morphisms or Locale morphisms, which are related to Topoi and Sheaves over Locales.
	} to obtain the useful inequalities:

	$$ \phi\bvalue{x =  y}_\A\leq\bvalue{\tilde\phi(x) = \tilde\phi(y)}_\B$$
	$$ \phi\bvalue{x\in y}_\A\leq\bvalue{\tilde\phi(x)\in\tilde\phi(y)}_\B$$

	And we still have the inequality for $\Sigma_1$ formulas.

    Our efforts were in providing a possible generalization of this construction for non-injective maps that preserve arbitrary suprema and finite infima. In this section, we focus on that and on some difficulties we faced in the process.
    
    The reason for our search is that it is taken as a fact that the category of Heyting/Boolean valued models is related to other categories endowed with these morphisms. Despite us having a \emph{horizontal} connection between models and topoi
    $$\H \rightsquigarrow \V{\H} \rightsquigarrow \mathbf{Set}^{(\H)} \simeq \hset$$
    the vertical connections between arrows from $\H\to\H'$, $\V\H\to\V{\H'}$, etc. does not seem to have been widely explored in the literature. The only studied cases were automorphisms of complete Boolean algebra, complete monomorphisms between complete Boolean algebras (see exercise 3.12 in \cite{Bel05}) and retractions associated to those morphisms (see chapter 3 of \cite{Gui13}).
    
    This constitutes one of our main motivations to study if (and how) we could induce arrows between models from more general arrows between complete Heyting algebras. The other one is purely categorical: can geometric morphisms between localic topoi be lifted to morphisms between their associated Heyting value models?
    
    
    Here we present the main results of our work: we consider and explore how more general kinds of morphisms between complete Heyting algebras $\HH$ and $\HH'$ induce arrows between $V^\HH$ and $V^{\HH'}$, and   between their corresponding Heyting topoi $\Seth{\HH} (\simeq \Sh{\HH})$ and $\Seth{\HH'} (\simeq \Sh{\HH'})$. 

In the remainder of the section, $\H$ and $\H'$ will denote complete Heyting algebras, and $f:\H\to\H'$ shall be a locale morphism (notation: $f\in\mathbf{Loc}(\H,\H')$), \emph{i.e.}, $f$ is a function that preserves arbitrary suprema and finite infima.

\subsection{Induced morphisms}

\begin{definition}\label{def:tildef}
\textbf{(First proposal)} We recursively define a family
$$\left\{\tilde{f}_\alpha: \V{\H}_\alpha \rightharpoonup \V{\H'}_\alpha \ \middle| \ \alpha \in \ord \right\}$$
where $\rightharpoonup$ indicates that $\tilde{f}_\alpha$ are ``semi-functions". That is, for all $x \in \V{\H}_\alpha$, there exists $x' \in \V{\H'}$ such that $\tuple{x,x'} \in \tilde{f}_\alpha$) in the following way: for every $\alpha \in \ord$ and every $\tuple{x,x'} \in \V{\H}_\alpha \times \V{\H'}$, $\tuple{x,x'} \in \tilde{f}_\alpha$ if, and only if, there exists a surjection $\varepsilon: \dom{x} \twoheadrightarrow \dom{x'}$ such that $\tuple{u,\varepsilon(u)} \in \tilde{f}_{\varrho(x)}$ for all $u \in \dom{x}$, and the following diagram commutes:
\begin{diagram}
    \dom{x} & \rTo^{\quad x \quad} & \H\\
    \dOnto<\varepsilon & & \dTo>f\\
    \dom{x'} & \rTo^{x'} & \H'
\end{diagram}
Under these conditions, we say that $\varepsilon$ witnesses $\tuple{x,x'} \in \tilde{f}_\alpha$. Note that, if we suppose (by induction) that $\tilde{f}_\beta$ is defined for every $\beta < \alpha$, then the semi-function $\tilde{f}_{\varrho(x)}: \V{\H}_{\varrho(x)} \rightharpoonup \V{\H'}_{\varrho(x)}$ is defined, therefore $\dom{x'} \subseteq \V{\H'}_{\varrho(x)}$ and $x' \in \V{\H'}_\alpha$.


Thus, we define $\tilde{f} \defeq \bigcup\limits_{\alpha \in \ord} \tilde{f}_\alpha$ and
$$dom\left(\tilde{f}\right) = \bigcup\limits_{\alpha \in \ord} dom\left(\tilde{f}_\alpha\right) = \bigcup\limits_{\alpha \in \ord} \V{\H}_\alpha = \V{\H}$$
so that $\tilde{f}$ is also a semi-function $\tilde{f}: \V{\H} \rightharpoonup \V{\H'}$.
\end{definition}

\begin{prop}\label{prop:tildef-injetora}
If $f$ is injective, then, for all $\alpha \in \ord$, $\tilde{f}_\alpha$ is an injective function.
\end{prop}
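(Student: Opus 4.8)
The plan is a transfinite induction on $\alpha$, in which one proves simultaneously that $\tilde f_\alpha$ is total on $\V\H_\alpha$, single-valued, and injective; single-valuedness will turn out to use nothing about $f$, while totality and injectivity use that $f$ is injective (propagated through the induction hypothesis). The case $\alpha=0$ is vacuous since $\V\H_0=\emptyset$. For the inductive step, fix $\alpha$ and assume $\tilde f_\beta$ is a total injective function for all $\beta<\alpha$. First I would record a coherence remark about Definition \ref{def:tildef}: whether $\tuple{x,x'}\in\tilde f_\alpha$ is decided by the existence of a surjection $\varepsilon$ together with conditions on the pairs $\tuple{u,\varepsilon(u)}$, $u\in\dom{x}$, and on a commuting square; each such $u$ has $\varrho(u)<\varrho(x)\le\alpha$, and since the defining condition for a pair refers only to the names involved and to their own ranks (not to the ambient index), $\tuple{u,\varepsilon(u)}\in\tilde f_{\varrho(x)}$ iff $\tuple{u,\varepsilon(u)}\in\tilde f_{\varrho(u)}$; moreover all the $\varrho(u)$ are bounded by a single $\beta<\alpha$ witnessing $x\in\V\H_\alpha$, so every recursive reference is governed by the induction hypothesis. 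A consequence is that the witness is forced: for $u\in\dom{x}$, $\varepsilon(u)$ must be the unique image $\tilde f(u)$ of $u$ supplied by the lower-rank data. Single-valuedness is then immediate: if $\varepsilon,\eta$ witness $\tuple{x,x'},\tuple{x,x''}\in\tilde f_\alpha$ then $\varepsilon=\eta$ (both send $u$ to $\tilde f(u)$), hence $\dom{x'}=\varepsilon[\dom{x}]=\dom{x''}$, and commutativity of the two squares gives $x'(\varepsilon(u))=f(x(u))=x''(\varepsilon(u))$ for all $u$, so $x'=x''$.

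For totality, given $x\in\V\H_\alpha$ I would set $\varepsilon(u):=\tilde f(u)$ for $u\in\dom{x}$ (legitimate by the induction hypothesis), put $D':=\varepsilon[\dom{x}]$, and define $x':D'\to\H'$ by $x'(\varepsilon(u)):=f(x(u))$. This is well-defined precisely because $\varepsilon(u_1)=\varepsilon(u_2)$ forces $u_1=u_2$ by injectivity of the lower $\tilde f_\beta$'s, whence $x(u_1)=x(u_2)$. Then $\dom{x'}=D'$ sits inside some $\V{\H'}_\beta$ with $\beta<\alpha$, so $x'\in\V{\H'}_\alpha$; by construction $\varepsilon$ is onto $\dom{x'}$ and the square commutes, so $\tuple{x,x'}\in\tilde f_\alpha$.

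For injectivity, suppose $\varepsilon:\dom{x}\twoheadrightarrow\dom{x'}$ and $\zeta:\dom{y}\twoheadrightarrow\dom{x'}$ witness $\tuple{x,x'},\tuple{y,x'}\in\tilde f_\alpha$. For $u\in\dom{x}$ choose $v\in\dom{y}$ with $\zeta(v)=\varepsilon(u)$; since $\varepsilon(u)=\tilde f(u)$, $\zeta(v)=\tilde f(v)$, and the lower $\tilde f_\beta$'s are injective, $u=v$, so $\dom{x}\subseteq\dom{y}$, and by symmetry $\dom{x}=\dom{y}$. Then $\varepsilon=\zeta$ (both are $u\mapsto\tilde f(u)$), and commutativity of the two squares yields $f(x(u))=x'(\varepsilon(u))=x'(\zeta(u))=f(y(u))$ for all $u$, whence $x=y$ because $f$ is injective. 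Since $\alpha$ was arbitrary, this completes the induction.

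The main obstacle is bookkeeping rather than depth: one must be careful that the recursive references $\tilde f_{\varrho(x)}$ in Definition \ref{def:tildef} are genuinely covered by the induction hypothesis even at successor stages, where $\varrho(x)=\alpha$ can occur — this is exactly what the coherence remark handles. The only place where injectivity of $f$ is essential (apart from being carried along by the hypothesis) is the well-definedness of $x'$ in the totality step and the final cancellation $f(x(u))=f(y(u))\Rightarrow x(u)=y(u)$ in the injectivity step; without it the construction genuinely fails to produce a function, which is why the hypothesis appears.
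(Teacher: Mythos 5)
Your proof is correct and follows essentially the same route as the paper's: the induction hypothesis forces the witness $\varepsilon$ to be the (bijective) restriction of the lower-level $\tilde f$, so $x'=f\circ x\circ\varepsilon^{-1}$ is uniquely determined, and injectivity of $f$ then yields injectivity of $\tilde f_\alpha$. You are somewhat more thorough than the paper in also carrying totality through the induction, in handling the case $\varrho(x)=\alpha$ via the coherence remark, and in comparing the two witnesses explicitly in the injectivity step, but these are refinements of the same argument rather than a different approach.
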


\begin{proof}
By induction. Suppose that $\tilde{f}_\beta$ is an injective function for all $\beta < \alpha$ and let $\tuple{x,x'} \in \tilde{f}_\alpha$. Then, $\varepsilon = \tilde{f}_{\varrho(x)}\restriction: \dom{x} \twoheadrightarrow \dom{x'}$ is a bijection, because it's surjective by definition, and, since $\tuple{u,\varepsilon(u)} \in \tilde{f}_{\varrho(x)}$ for all $u \in \dom{x}$, the induction hypothesis implies that $\varepsilon = \tilde{f}_{\varrho(x)}\restriction$ is injective. Therefore, using the commutative diagram from the definition, $x'$ is uniquely determined by $x' = f \circ x \circ \varepsilon^{-1}$, that is, $\tilde{f}_\alpha$ is a function. Besides, if $x \neq y$ (in $\V{\H}_\alpha$), then $f$ being injective implies
$$\tilde{f}_\alpha(x) = f \circ x \circ \varepsilon^{-1} \neq f \circ y \circ \varepsilon^{-1} = \tilde{f}_\alpha(y)$$
so that $\tilde{f}_\alpha$ is also injective.
\end{proof}

Hence, this function covers the result stated in \cite{Bel05}, exercise 3.12.

	\begin{remark}
	Naive attempts to extend this initial proposal are fated to fail, for, in the absence of injectivity, the defined relation is not a function.

	In fact, note that Definition \ref{def:tildef} presents a serious problem: 
it does not guarantee that $\dom{\tilde{f}}_\alpha = \V\H_\alpha$, for each $\alpha \in \ord$. For example, consider the Boolean algebras $\mathbf{2} = \{0,1\}$ and $\mathbf{4} = \{0, a, \neg a, 1\}$ (with $0 \neq a \neq 1$) and the function $f: \mathbf{4} \to \mathbf{2}$ given by $f(a) = 0$ and $f(\neg a) = 1$. Firstly,
\begin{center}
\begin{tabular}{c c}
    $\V{\mathbf{4}}_0 = \emptyset$ & $\V{\mathbf{2}}_0 = \emptyset$ \\
    $\V{\mathbf{4}}_1 = \{\emptyset\}$ & $\V{\mathbf{2}}_1 = \{\emptyset\}$ \\
    $\V{\mathbf{4}}_2 = \big\{ \{(\emptyset, 0)\}, \{(\emptyset, 1)\}, \{(\emptyset, a)\}, \{(\emptyset, \neg a)\} \big\}$ \hspace{2em} & \hspace{2em} $\V{\mathbf{2}}_2 = \big\{ \{(\emptyset, 0)\}, \{(\emptyset, 1)\} \big\}$
\end{tabular}
\end{center}
Let $x \defeq \big\{ \big(\set{(\emptyset, 0)}, 0\big) \big\}, \big(\set{(\emptyset, a)}, 1 \big) \big\}$, $u \defeq \{(\emptyset, 0)\}$ and $v \defeq \{(\emptyset, 1)\}$. It can be easily verified that $\{(\emptyset, 0)\}$ is the only element of $\V{\mathbf{2}}_2$ such that $\tuple{u, \{(\emptyset, 0)\}}, \tuple{v, \{(\emptyset, 0)\}} \in \tilde{f}_2$.

Thus, consider $x' \in \V{\mathbf{2}}$ and suppose there exists $\varepsilon: \dom{x} \twoheadrightarrow \dom{x'}$ such that $\tuple{u,\varepsilon(u)}, \tuple{v,\varepsilon(v)} \in \tilde{f}_2$, \emph{i.e.} $\varepsilon(u) = \varepsilon(v) = \{(\emptyset, 0)\}$. But in this case, we cannot guarantee the diagram in the definition commutes, since we would have:
$$0 = \varphi(x(u)) = x'(\varepsilon(u)) = x'(\varepsilon(v)) = \varphi(x(v)) = 1$$
Therefore, there does not exist $x' \in \V{\mathbf{2}}$ such that $\tuple{x,x'} \in \tilde{f}_3$.

\end{remark}

To deal with this issue, we add more elements to the image of the semi-function, closing it by the equivalence relation $\equiv$ (another option would be to close the images only for the equivalent members with minimum rank).

	\renewcommand{\epsilon}{\varepsilon}

	\begin{definition}{Generalized Connection between $\V\H$s}
	
		Let $f\in\mathbf{Loc}({\H},{\H'})$. Define the following compatible family of relations by ordinal recursion:
		
		\begin{align*}
			x \mathrel{\tilde f_\alpha} y \iff &\exists (\epsilon:\dom x\sur\dom y) : (y\circ\epsilon = f\circ x) \land\\
			&\forall u\in\dom x:\exists v\in\V{\H'}:\exists\beta<\alpha: (u\mathrel{\tilde f}_\beta v) \land \bvalue{v=\epsilon(u)} = 1 
		\end{align*}

		$$ \tilde f=\bigcup_{\alpha\in\ord} \tilde f_\alpha$$
    \end{definition}
    
    This definition in particular was used because of the following: the requirement of the existence of a surjective function is due to our need that every object that is related to $x$ has its (domain's) elements determined by elements of (the domain of) $x$. This is true in the injective case, where the function is the witness of this existential. In the non-injective case, $\epsilon$ ``essentially''\footnote{Up to $\bvalue{\cdot=\cdot}=1$-equivalence.}
    is going to be a ``piece'' or ``fragment'' of $\tilde f$ that happens to be a function and
    behaves \emph{similarly} to how $\tilde f$ would if $f$
    was injective.
    
    We demand that $y\circ\epsilon = f\circ x$, to extend the original idea of the construction by injective morphisms to more general ones:  $\tilde f(x)(\tilde f(u))=(f\circ x)(u)$, $u \in dom (x)$. If $y$ is related to $x$, then  there is a function fragment of $\tilde f$ which makes the above commute.

    It is, however, not enough to ask only this, since one such $y$ could be chosen \textit{ad hoc} without the members of its domain being related to the members of $x$'s. There is no hope for us to attain the imposed conditions of inequalities of atomic formulas, which depend recursively on the domains of the involved objects, if we do not impose some similarly recursive demands on the relation.

    Thus, the final condition says that for every member $u$ of $x$'s domain, there was some $u$ in some previous step which to which it was related. Surely $\tilde f$ is only very rarely a function, but 
    after taking the quotient by $\bvalue{\cdot=\cdot}$-equivalence it is a function.
	
	Were we to remove $\bvalue{v=\epsilon(u)}=1$, and simply require that $u\mathrel{\tilde f}_\beta \epsilon(u)$, the definition would coincide for injective functions, but in general the domain of $\tilde f$ as a relation would not be total, \emph{i.e.} it wouldn't be all of $\V{\H}$.

	\newcommand{\mor}{f}
	\newcommand{\ind}{\mathrel{\tilde{\mor}}}

	\begin{theorem}{$\tilde f$'s domain is total}

		This is: for all morphism $f:{\H}\to{\H'}$,
		$$ \forall x\in\V{\H} :\exists x'\in\V{\H'} : x\ind x' $$

		\begin{proof}
            The proof follows from the  2 facts below:

			\begin{fact}
			
				Suppose that $\forall u\in\dom x:\forall\kappa\in\ord:\exists(u':\kappa\inj {\V{\H'}}):\forall\alpha<\kappa:u\ind u'_\alpha$.
				In this case, it is trivial to see that there is an $X'\subset\V{\H'}$ such that there is a bijection $\epsilon:\dom x\to\dom {x'}=X'$ such that $\forall u\in\dom x: u\ind\epsilon (u)$.
				
				Thus, let $x' = f \circ x \circ \epsilon^{-1}$. It is evident that $x\ind x'$.
                Therefore, if there exists a proper class of elements to the right of every member of the domain of $x$, then there is some $x'$ such that $x\ind x'$.
			\end{fact}

			\begin{fact}
				Suppose that $x\not=\emptyset$ and that $\exists x'\in\V{\H'}: x\ind x'$.
				Let $u\in\dom x$, $u\in\dom{x'}$ such that $u\ind u'$ and consider
				$\epsilon:\dom x\sur\dom{x'}$ witnessing $x\ind x'$.

				Trivially, 
				$$\exists\alpha\in\ord:\forall\xi>\alpha:\forall t':= \left[u'\cup\set{\tuple{\hat\xi,0}}\right] \rar \bvalue{t'=u'}=1$$

                Simply because we are adding some object which was not in the domain of $u'$ whose value under $t$ will be $0$, and in the equality, the $0$ will be in the antecedent of the implication.

                So, for each ordinal bigger than $\alpha$, we obtain a different set which is equal to $u'$ with ``probability'' $1$, and thus, $x$ must have a proper class of elements $y$ such that $x\ind y$.
			\end{fact}

            Joining the previous results, we have:
            
			$$\forall x\in\V{\H}: x\not=\emptyset \rar ([\forall u\in\dom x: \exists u':u\ind u'] \rar \exists x':x\ind x' ) $$

			As the consequent is true when $x=\emptyset$ --- for $\emptyset\ind\emptyset$ --- we have:
			
			$$\forall x\in\V{\H}: [\forall u\in\dom x: \exists u':u\ind u'] \rar \exists x':x\ind x' $$

			By regularity:
			$$\forall x\in\V{\H}: \exists x':x\ind x' $$
		\end{proof}
	\end{theorem}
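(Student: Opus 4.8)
The plan is to prove the statement by $\in$-induction (equivalently, by induction on $\varrho(x)$) using the Foundation/Regularity axiom, exactly as the displayed final lines of the proof skeleton suggest. The target is $\forall x \in \V\A: \exists x' \in \V\B: x \ind x'$, and the inductive hypothesis is precisely the antecedent $\forall u \in \dom x: \exists u': u \ind u'$. So the whole argument reduces to establishing the implication
$$\forall x \in \V\A:\ \bigl[\forall u \in \dom x:\exists u': u \ind u'\bigr] \rar \exists x': x \ind x'.$$
To do this I would first dispose of the base-like case $x = \emptyset$ directly: $\emptyset \ind \emptyset$ is witnessed by the empty surjection, so the consequent holds vacuously. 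For $x \neq \emptyset$, I would chain together the two Facts: Fact~2 shows (via the inductive hypothesis that each $u \in \dom x$ has \emph{some} partner $u'$) that in fact each $u$ has a \emph{proper class} of partners — more precisely, for every ordinal $\kappa$ there is an injection $\kappa \inj \V\B$ all of whose values are $\ind$-related to $u$ — and then Fact~1 uses that abundance to choose, simultaneously for all $u \in \dom x$, a set of pairwise-distinct partners, giving a genuine bijection $\epsilon: \dom x \to X' \subseteq \V\B$ with $u \ind \epsilon(u)$; setting $x' := \phi \circ x \circ \epsilon^{-1}$ then makes $x \ind x'$ by construction.

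The key steps, in order, are: (1) state the induction and reduce to the boxed implication; (2) handle $x = \emptyset$; (3) prove Fact~2, the ``abundance'' claim: starting from one partner $u'$ of $u$ with witnessing surjection $\epsilon$, observe that for all sufficiently large ordinals $\xi$ the name $t'_\xi := u' \cup \{\langle \hat\xi, 0\rangle\}$ satisfies $\bvalue{t'_\xi = u'} = 1$ — because the only new entry has value $0_\B$, which sits in the antecedent of the relevant implication in the definition of $\bvalue{\cdot = \cdot}$, hence contributes $1$ — and each such $t'_\xi$ is again $\ind$-related to $u$ (the same surjection works after composing with the relabelling, since $\ind$ only sees values up to $\bvalue{\cdot=\cdot}=1$), and the $t'_\xi$ are pairwise distinct as \emph{sets}, yielding a proper-class-sized supply; (4) prove Fact~1, which is the routine observation that from a proper class of partners for each of the finitely-or-set-many elements of $\dom x$ one can injectively pick distinct representatives (using that $\dom x$ is a set and each ``column'' is a proper class, so the union of the already-used partners stays a set); (5) assemble $x' = \phi\circ x\circ\epsilon^{-1}$ and check $y\circ\epsilon = \phi\circ x$ and the recursive clause of the definition of $\tilde\phi_\alpha$; (6) invoke Regularity to conclude.

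The main obstacle I expect is Step~(3), the verification that the ``padding'' names $t'_\xi = u' \cup \{\langle\hat\xi,0\rangle\}$ really are $\bvalue{\cdot=\cdot}=1$-equivalent to $u'$ \emph{and} still $\ind$-related to $u$. The equivalence itself is a short computation with the formula for $\bvalue{t' = u'}$ (using the Properties of Formula valuation, items on equality, and that $0_\B \rar (\cdot) = 1$), but one must be careful that $\hat\xi$ is a fresh element not already in $\dom{u'}$ — that is why one only claims it for all $\xi$ beyond some $\alpha$ — and that the chosen surjection witnessing $u \ind u'$ can be transported to witness $u \ind t'_\xi$: since the definition of $\tilde\phi_\beta$ only requires $\bvalue{v = \epsilon(u)} = 1$ rather than literal equality, and $t'_\xi$ differs from $u'$ only by a $0$-valued entry, the same data goes through, but this needs to be spelled out. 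A secondary subtlety is bookkeeping with the ordinal indices $\beta < \alpha$ in the recursive clause — one must confirm that the stages $\tilde\phi_\beta$ used in building $\epsilon$ are all strictly below the stage at which $x \ind x'$ is being asserted, which follows from $\varrho(x') \le \varrho(x)$ as noted in the remarks surrounding the definition. Everything else is standard cumulative-hierarchy manipulation.
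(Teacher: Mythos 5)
Your proposal matches the paper's own argument essentially step for step: the same two facts (abundance of partners via padding a name with a fresh $0$-valued entry $\left\langle\hat\xi,0\right\rangle$, then a simultaneous choice of distinct partners to build a bijective witness and set $x'=\phi\circ x\circ\epsilon^{-1}$), assembled by the same induction via Regularity, with the empty case handled separately. The only cosmetic difference is that you place the ``proper class of partners'' claim at the level of each $u\in\dom{x}$ (using the $\bvalue{v=\epsilon(u)}=1$ slack in the recursive clause) whereas the paper phrases its second fact as producing a proper class of partners of $x$ itself; the content and the subtleties you flag are the same.
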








Alternatively, we could assume in the Definition \ref{def:tildef} that $\tuple{x,x'} \in \tilde{f}_\alpha$ if, and only if, there exists a witness $\varepsilon: \dom{x} \twoheadrightarrow \dom{x'}$ as in the original definition, and, for all $u \in \dom{x}$, there exists $u' \in \V{\H'}$ such that $\tuple{u,u'} \in \tilde{f}_{\varrho(x)}$ and $\bvalue{ \varepsilon(u) = u' }' = 1'$.

We choose the first condition mentioned for our final definition, however all are equivalent in the quotient by $\equiv$.

\noindent
\textbf{Definition \ref{def:tildef} (complement)} Adding to the original definition, we also assume that if $\tuple{x,x'} \in \tilde{f}_\alpha$ (that is, there exists a witness for that) and $\bvalue{ x' = y' }' = 1'$, then $\tuple{x,y'} \in \tilde{f}_\alpha$.

\begin{remark}
We observe that $\V{\H}$ is a proper class (for $\H \neq \{0\}$), since there exists an injection $V \rightarrowtail \V{\H}$. It can also be shown that, for all $x \in \V\H$, $\left\{y \in \V{\H} \mid \bvalue{ x = y } = 1\right\}$ is a proper class. Indeed, for all $\Sigma \subseteq \V\H$ such that $\Sigma \cap \dom{x} = \emptyset$, we may define $y_\Sigma: \dom{x} \cup \Sigma \to \H$ as:
\[y_\Sigma(u) = 
    \begin{cases}
    x(u) & \text{, if } u \in \dom{x} \\
    0 & \text{, if } u \in \Sigma
    \end{cases}
\]
so that $\bvalue{ x = y_\Sigma } = 1$.
\end{remark}

Finally, we show that $\tilde{f}_\alpha$ does actually have the desired domain; the argument is similar to the one used above to prove the injective case.

\begin{prop}
For all $\alpha \in \ord$, $\dom{\tilde{f}_\alpha} = \V{\H}_\alpha$ (and its image is closed by $\equiv$).
\end{prop}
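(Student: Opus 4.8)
The plan is a transfinite induction on $\alpha$. The inclusion $\dom{\tilde f_\alpha}\subseteq\V\H_\alpha$ is built into Definition \ref{def:tildef} (the admissible pairs are taken from $\V\H_\alpha\times\V{\H'}$, and the complement to that definition only enlarges images while keeping the same left coordinate), and the assertion that the image of $\tilde f_\alpha$ is closed under $\equiv$ is an immediate consequence of that complement; so the substantive point is the reverse inclusion $\V\H_\alpha\subseteq\dom{\tilde f_\alpha}$. This is the stagewise refinement of the earlier theorem on totality of $\dom{\tilde\phi}$, and I would reuse its two Facts, now bookkept by the stage at which a witness is produced: (i) if every $u\in\dom x$ is $\tilde f_\beta$-related to proper-class-many elements, for one fixed $\beta<\alpha$, then one can assemble an $x'$ with $\tuple{x,x'}\in\tilde f_\alpha$; (ii) being $\tilde f_\beta$-related to a single element already forces being $\tilde f_\beta$-related to proper-class-many of them, by the Remark on proper classes together with closure of $\tilde f_\beta$ under $\equiv$ on the right.

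For the base case $\alpha=0$ there is nothing to prove, and I also record that $\tuple{\emptyset,\emptyset}\in\tilde f_\alpha$ for every $\alpha\geq 1$, witnessed by the empty surjection and a vacuous recursive clause. For the inductive step, assume $\dom{\tilde f_\beta}=\V\H_\beta$ for all $\beta<\alpha$ and let $x\in\V\H_\alpha$ be nonempty; choose $\beta_0<\alpha$ with $\dom x\subseteq\V\H_{\beta_0}$, for instance $\beta_0=\sup_{u\in\dom x}\rank{u}$ (this is $<\alpha$ by the very definition of $\V\H_\alpha$). By the induction hypothesis each $u\in\dom x$ lies in $\dom{\tilde f_{\beta_0}}$, hence is $\tilde f_{\beta_0}$-related to some $u'\in\V{\H'}$; by the Remark the class $\set{v\in\V{\H'}\mid\bvalue{u'=v}'=1'}$ is proper, and since $\tilde f_{\beta_0}$ is closed under $\equiv$ on the right, the class $W_u\defeq\set{v\mid\tuple{u,v}\in\tilde f_{\beta_0}}$ contains it and is therefore a proper class, for every $u\in\dom x$.

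Next I would well-order $\dom x$ and choose, recursively along that order, an element $v_u\in W_u$ distinct from all the previously chosen $v_w$ — this is possible exactly because each $W_u$ is a proper class, hence is not contained in the (set-sized) collection of earlier choices. With $X'\defeq\set{v_u\mid u\in\dom x}$, the map $\varepsilon\colon\dom x\to X'$ given by $u\mapsto v_u$ is a bijection, and putting $x'\defeq f\circ x\circ\varepsilon^{-1}$ (so $\dom{x'}=X'$ and $x'(v_u)=f(x(u))$) one checks at once that the square in Definition \ref{def:tildef} commutes and that $\varepsilon$ witnesses the recursive clause, because $\tuple{u,v_u}\in\tilde f_{\beta_0}\subseteq\tilde f_{\rank{x}}$ for every $u\in\dom x$ (monotonicity of the family $(\tilde f_\gamma)_\gamma$). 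Hence $\tuple{x,x'}\in\tilde f_\alpha$, so $x\in\dom{\tilde f_\alpha}$, and the induction closes.

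The step that requires the most care is the ordinal and rank bookkeeping that makes the recursion legitimate: one must verify that $\beta_0<\alpha$ so the induction hypothesis applies, and be honest about the fact that the $\equiv$-closure may raise the rank of the chosen $v_u$ above $\beta_0$, so that $\tilde f_{\beta_0}$ and $\tilde f_{\rank{x}}$ appearing in the recursive clause must be read as relations whose domain is a $\V\H$-stage but whose image is all of $\V{\H'}$, rather than with image confined to the matching stage; equivalently, one may prefer to run the whole argument through minimum-rank representatives, as the parenthetical in the statement suggests. A second point to watch is that mere nonemptiness of the $W_u$ would not suffice — it is their properness, supplied by the Remark, that makes the pairwise-distinct choice of the $v_u$ possible, and hence lets $\varepsilon$ be a genuine bijection with $x'\circ\varepsilon=f\circ x$ even though $f$ is not assumed injective.
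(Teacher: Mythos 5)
Your proof is correct and follows essentially the same route as the paper's: transfinite induction, using the fact that each image class $\set{v \mid \tuple{u,v}\in\tilde f_\beta}$ is non-empty (induction hypothesis) and closed under $\equiv$, hence a proper class, to select pairwise-distinct witnesses $v_u$ and obtain a bijection $\tau$ with $x'\defeq f\circ x\circ\tau^{-1}$. Your explicit well-ordering of $\dom{x}$ just unpacks what the paper compresses into ``using the axiom of replacement, we may define an injection,'' and your remarks on the rank bookkeeping and on why properness (not mere non-emptiness) is needed are sound elaborations rather than deviations.
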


\begin{proof}
By induction: suppose that, for all $\beta \in \ord$, with $\beta < \alpha$, $\dom{\tilde{f}_\beta} = \V{\H}_\beta$. Let $x \in \V{\H}_\alpha$, and notice that, for all $u \in \dom{x} \subseteq \V{\H}_{\varrho(x)}$, the image of $\tilde{f}_{\varrho(x)}(\{u\})$ is a proper class, since it is non-empty and closed by $\equiv$ (by definition). Therefore, using the axiom of replacement, we may define an injection $\varepsilon: \dom{x} \rightarrowtail \V{\H'}$ satisfying $\tuple{u,\varepsilon(u)} \in \tilde{f}_{\varrho(x)}$, which we may restrict to a bijection $\tau: \dom{x} \to \varepsilon(\dom{x})$. Hence, just take $x' \in \V{\H'}$ as $x': \varepsilon(\dom{x}) \to \H'$ given by $x' \defeq f \circ x \circ \tau^{-1}$, so that $\tuple{x,x'} \in \tilde{f}_\alpha$, and thus $\dom{\tilde{f}_\alpha} = \V{\H}_\alpha$.
\end{proof}

Note that with this new definition, for all $x \in \V\H_\alpha$ there exists $x' \in \V{\H'}$ such that $\tuple{x,x'} \in \tilde{f}_\alpha$ is witnessed by a bijection $\tau: \dom{x} \to \dom{x'}$ (not only a surjection). In fact, we could have assumed the existence of a bijective witness in the definition of $\tilde{f}_\alpha$, and again that would be equivalent to the other possible definitions in the quotient by ``$\bvalue{\cdot=\cdot}=1$'' equivalence relation.

\subsection{Semantical preservation results}

\begin{theorem}\label{teo:tildef-pres-atomicas}
For all $\tuple{x,x'},\tuple{y,y'},\tuple{z,z'} \in \tilde{f}$,
$$f\left(\bvalue{ y \in x }\right) \leq' \bvalue{ y' \in x' }' \qquad \text{ and } \qquad f\left(\bvalue{ x = z }\right) \leq' \bvalue{ x' = z' }'$$
\end{theorem}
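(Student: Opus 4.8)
The plan is to prove both inequalities simultaneously by well-founded induction on the relation $\prec$ on $\V{\H}\times\V{\H}$ (shown well-founded above), following exactly the bookkeeping of the simultaneous recursion that defines $\bvalue{\cdot\in\cdot}$, $\bvalue{\cdot\ni\cdot}$ and $\bvalue{\cdot=\cdot}$. Writing $p,q$ for generic $\H$-names, I attach to each pair $\tuple{p,q}\in\V\H\times\V\H$ three claims --- $(\in)$: $f\bvalue{p\in q}\leq'\bvalue{p'\in q'}'$; $(\ni)$: $f\bvalue{p\ni q}\leq'\bvalue{p'\ni q'}'$; $(=)$: $f\bvalue{p=q}\leq'\bvalue{p'=q'}'$, each quantified over all $p',q'$ with $\tuple{p,p'},\tuple{q,q'}\in\tilde f$ --- and prove all three at $\tuple{p,q}$ from all three at every $\prec$-smaller pair; the theorem is then the instances $(\in)$ at $\tuple{y,x}$ and $(=)$ at $\tuple{x,z}$. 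Throughout I use the elementary facts that a locale morphism $f:\A\to\B$ satisfies $f(a\wedge b)=f(a)\wedge'f(b)$ and $f\!\left(\bigvee_i a_i\right)=\bigvee'_i f(a_i)$, is therefore monotone, and consequently satisfies $f\!\left(\bigwedge_i a_i\right)\leq'\bigwedge'_i f(a_i)$ and $f(a\rar b)\leq'f(a)\rar'f(b)$ (the latter by residuation in $\B$, as $f(a)\wedge'f(a\rar b)=f(a\wedge(a\rar b))\leq'f(b)$). Finally I use that $\tuple{q,q'}\in\tilde f$ admits a surjective witness $\varepsilon:\dom q\sur\dom{q'}$ with $q'\circ\varepsilon=f\circ q$ and $\tuple{u,\varepsilon(u)}\in\tilde f$ for every $u\in\dom q$: this is Definition~\ref{def:tildef} with its complement built in, and since both sides of each claim are invariant under replacing $p'$ or $q'$ by an $\equiv$-equivalent name (by the substitution properties of $\bvalue{\cdot}$, e.g. $\bvalue{a\in b}\wedge\bvalue{b=c}\leq\bvalue{a\in c}$ and its symmetric and equality variants), assuming such witnesses loses no generality.

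For $(\in)$ at $\tuple{p,q}$, take a witness $\varepsilon:\dom q\sur\dom{q'}$ of $\tuple{q,q'}\in\tilde f$. Using $f(q(u))=q'(\varepsilon(u))$ from the commuting square and preservation of $\bigvee,\wedge$,
$$f\bvalue{p\in q}=f\!\left(\bigvee_{u\in\dom q}q(u)\wedge\bvalue{p=u}\right)=\bigvee'_{u\in\dom q}q'(\varepsilon(u))\wedge'f\bvalue{p=u}.$$
For each $u\in\dom q$ we have $\tuple{p,u}\prec\tuple{p,q}$ and $\tuple{u,\varepsilon(u)}\in\tilde f$, so the inductive hypothesis $(=)$ at $\tuple{p,u}$ gives $f\bvalue{p=u}\leq'\bvalue{p'=\varepsilon(u)}'$; substituting and reindexing the supremum along the surjection $\varepsilon$ (every $w\in\dom{q'}$ is some $\varepsilon(u)$) gives $f\bvalue{p\in q}\leq'\bigvee'_{w\in\dom{q'}}q'(w)\wedge'\bvalue{p'=w}'=\bvalue{p'\in q'}'$. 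The claim $(\ni)$ at $\tuple{p,q}$ is the mirror image: its recursion reduces $\bvalue{p\ni q}$ to $\bvalue{v=q}$ for $v\in\dom p$, with $\tuple{v,q}\prec\tuple{p,q}$, and one uses a witness of $\tuple{p,p'}$ together with $(=)$ at those pairs.

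For $(=)$ at $\tuple{p,q}$, take witnesses $\varepsilon:\dom p\sur\dom{p'}$ and $\zeta:\dom q\sur\dom{q'}$. From
$$\bvalue{p=q}=\bigwedge_{\substack{u\in\dom q\\ v\in\dom p}}\bigl(q(u)\rar\bvalue{p\ni u}\bigr)\wedge\bigl(p(v)\rar\bvalue{v\in q}\bigr),$$
push $f$ inside using the facts above about $f$ and $\wedge,\bigvee,\bigwedge,\rar$, and replace $f(q(u))$ by $q'(\zeta(u))$ and $f(p(v))$ by $p'(\varepsilon(v))$. For the inner values, $\tuple{p,u}\prec\tuple{p,q}$ with $\tuple{u,\zeta(u)}\in\tilde f$ gives (by $(\ni)$ at $\tuple{p,u}$) $f\bvalue{p\ni u}\leq'\bvalue{p'\ni\zeta(u)}'$, and $\tuple{v,q}\prec\tuple{p,q}$ with $\tuple{v,\varepsilon(v)}\in\tilde f$ gives (by $(\in)$ at $\tuple{v,q}$) $f\bvalue{v\in q}\leq'\bvalue{\varepsilon(v)\in q'}'$. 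Substituting and then reindexing the infimum along the surjections $\zeta,\varepsilon$ yields
$$f\bvalue{p=q}\leq'\bigwedge'_{\substack{w\in\dom{q'}\\ t\in\dom{p'}}}\bigl(q'(w)\rar'\bvalue{p'\ni w}'\bigr)\wedge'\bigl(p'(t)\rar'\bvalue{t\in q'}'\bigr)=\bvalue{p'=q'}',$$
closing the induction.

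The main obstacle is organisational rather than computational. One must set up the triple simultaneous induction so that each atomic value is decreased to exactly the $\prec$-smaller pairs its own recursion uses --- $(=)$ invokes $(\ni)$ and $(\in)$ at smaller pairs, and each of those invokes $(=)$ at smaller pairs --- which is why membership and co-membership cannot be silently merged. And one must have at hand a \emph{surjective} witness $\varepsilon$ with $\tuple{u,\varepsilon(u)}\in\tilde f$, which is precisely what the complement clause (closing images under $\equiv$) supplies: without surjectivity the two reindexing steps break down, and without $\varepsilon(u)$ itself being $\tilde f$-related to $u$ the inductive hypotheses cannot be applied. Note that, unlike Bell's injective case, only an inequality is obtained, and exactly for two reasons: $f$ need not commute with the $\bigwedge$ and $\rar$ appearing in $\bvalue{p=q}$, and $f$ need not be injective, so distinct $u\neq v$ may be identified by a witness, only weakening the supremum in $(\in)$.
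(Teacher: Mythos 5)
Your proof is correct and follows essentially the same route as the paper's: well-founded induction on $\prec$, pushing $f$ through $\bigvee,\wedge$ exactly and through $\bigwedge,\rar$ sub-multiplicatively, applying the inductive hypothesis at the $\prec$-smaller pairs supplied by a surjective witness, and reindexing along that surjection. Your explicit bookkeeping of $(\ni)$ as a third clause and your remark on invariance under $\equiv$-replacement of $x'$ are minor organisational refinements of the same argument, not a different method.
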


\begin{proof}
The proof is by induction on the well-founded relation
$$\tuple{u,x} \prec \tuple{v,y} \iff (u = v \text{ and } x \in \dom{y}) \text{ or } (u \in \dom{v} \text{ and } x = y)$$
Let $\varepsilon: \dom{x} \twoheadrightarrow \dom{x'}$ satisfying the conditions of \ref{def:tildef}. Then:
\begin{align*}
    f\left(\bvalue{ y \in x }\right) &= f\left(\bigvee\limits_{u \in \dom{x}} x(u) \wedge \bvalue{ u = y } \right) & \text{(by definition)}\\
     &= \bigvee\limits_{u \in \dom{x}}^{\hspace{18pt} \prime} f(x(u)) \wedge' f\left(\bvalue{ u = y }\right) & \left(\text{since $f$ preserves $\wedge,\bigvee$}\right)\\
     &\leq \bigvee\limits_{u \in \dom{x}}^{\hspace{18pt} \prime} f(x(u)) \wedge' \bvalue{ \varepsilon(u) = y' } & \left(\text{induction hypothesis $\tuple{u,\varepsilon(u)} \in \tilde{f}$)}\right)\\
     &= \bigvee\limits_{u \in \dom{x}}^{\hspace{18pt} \prime} x'(\varepsilon(u)) \wedge' \bvalue{ \varepsilon(u) = y' } & \left(\text{using that $\tuple{x,x'},\tuple{u,\varepsilon(u)} \in \tilde{f}$}\right)\\
     &= \bigvee\limits_{u' \in \dom{x'}}^{\hspace{18pt} \prime} x'(u') \wedge' \bvalue{ u' = y' } & \text{(since $\varepsilon$ is surjective)}\\
     &= \bvalue{ y' \in x' }' & \text{(by definition)}
\end{align*}
Now, since $\H$ is a Heyting algebra, note that the fact that $f$ preserves meets implies that $f$ is increasing, and also implies that $f(a \to b) \leq f(a) \to f(b)$, for all $a,b \in \H$. With that, let $\tau: \dom{z} \twoheadrightarrow \dom{z'}$ satisfying the conditions of \ref{def:tildef}. Then:
\begin{align*}
    f&\left(\bvalue{ x = z }\right) = & \\
     &= f\left(\bigwedge\limits_{\substack{u \in \dom{x}\\v \in \dom{z}}} \left(x(u) \to \bvalue{ u \in z }\right) \wedge \left(z(v) \to \bvalue{ v \in x }\right) \right) & \text{(by definition)}\\
     &\leq \bigwedge\limits_{\substack{u \in \dom{x}\\v \in \dom{z}}}^{\hspace{18pt} \prime} f\left(x(u) \to \bvalue{ u \in z }\right) \wedge' f\left( z(v) \to \bvalue{ v \in x } \right) & \text{($f$ is increasing)}\\
     &\leq \bigwedge\limits_{\substack{u \in \dom{x}\\v \in \dom{z}}}^{\hspace{18pt} \prime} \left(f(x(u)) \to' f\left(\bvalue{ u \in z }\right)\right) \wedge' \left( f(z(v)) \to' f\left(\bvalue{ v \in x } \right) \right) & \text{(comments above)}\\
     &\leq \bigwedge\limits_{\substack{u \in \dom{x}\\v \in \dom{z}}}^{\hspace{18pt} \prime} \left(f(x(u)) \to' \bvalue{ \varepsilon(u) \in z' }' \right) \wedge' \left( f(z(v)) \to' \bvalue{ \tau(v) \in x' }' \right) & \text{(comments below)}\\
     &= \bigwedge\limits_{\substack{u \in \dom{x}\\v \in \dom{z}}}^{\hspace{18pt} \prime} \left(x'(\varepsilon(u)) \to' \bvalue{ \varepsilon(u) \in z' }' \right) \wedge' \left( z'(\tau(v)) \to' \bvalue{ \tau(v) \in x' }' \right) & \text{(elements of $\tilde{f}$)}\\
     &= \bigwedge\limits_{\substack{u' \in \dom{x'}\\v' \in \dom{z'}}}^{\hspace{18pt} \prime} \left(x'(u') \to' \bvalue{ u' \in z' }' \right) \wedge' \left( z'(v') \to' \bvalue{ v' \in x' }' \right) & \text{($\varepsilon,\tau$ are surjections)}\\
     &= \bvalue{ x' = z' }' & \text{(by definition)}
\end{align*}
In the fourth step, we use that the implication is increasing in the second coordinate, and that by the induction hypothesis we have:
$$f\left(\bvalue{ u \in z }\right) \leq \bvalue{ \varepsilon(u) \in z' }' \quad \text{ and } \quad f\left(\bvalue{ v \in x }\right) \leq \bvalue{ \tau(v) \in x' }'$$
\end{proof}

This result easily extends to positive formulas (with only $\wedge, \vee$) with bounded quantifiers (of the form $\exists u \in x$ and $\forall u \in x$), using that corollary 1.18 from \cite{Bel05} gives us that:
$$\bvalue{ \exists u \in x \ \varphi(u) } = \bigvee\limits_{u \in \dom{x}} x(u) \wedge \bvalue{ \varphi(u) }$$
$$\bvalue{ \forall u \in x \ \varphi(u) } = \bigwedge\limits_{u \in \dom{x}} x(u) \to \bvalue{ \varphi(u) }$$

\begin{corollary}
Let $\varphi$ be a positive formula with bounded quantifiers. Then, for all $\tuple{a_1,a_1'},...,\tuple{a_n,a_n'} \in \tilde{f}$, we have:
$$f\left(\bvalue{ \varphi(a_1,...,a_n) } \right) \leq' \bvalue{ \varphi(a_1',...,a_n') }'$$
\end{corollary}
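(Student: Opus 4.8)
The proof I would give proceeds by induction on the structure of the positive bounded formula $\varphi$, with induction hypothesis exactly the inequality in the statement, quantified over \emph{all} tuples $\tuple{a_1,a_1'},\dots,\tuple{a_n,a_n'}\in\tilde f$; this way the free variables that are not bound inside $\varphi$ are carried along at no extra cost. The base case, $\varphi$ atomic (that is, $\varphi\equiv x_i\in x_j$ or $\varphi\equiv x_i = x_j$), is precisely Theorem \ref{teo:tildef-pres-atomicas}.

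For the connectives: if $\varphi\equiv\psi_1\wedge\psi_2$, then since $f$ preserves finite meets, $f(\bvalue{\varphi(a_1,\dots,a_n)}) = f(\bvalue{\psi_1})\wedge' f(\bvalue{\psi_2})$, and applying the induction hypothesis to each conjunct together with monotonicity of $\wedge'$ gives $\leq'\bvalue{\psi_1(a_1',\dots,a_n')}'\wedge'\bvalue{\psi_2(a_1',\dots,a_n')}' = \bvalue{\varphi(a_1',\dots,a_n')}'$. The case $\varphi\equiv\psi_1\vee\psi_2$ is identical, now using that $f$ preserves arbitrary (hence binary) suprema.

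For the bounded existential $\varphi\equiv\exists u\in a_i\,\psi(u)$, I would use the identity $\bvalue{\exists u\in a_i\,\psi(u)} = \bigvee_{u\in\dom{a_i}} a_i(u)\wedge\bvalue{\psi(u)}$ recalled just above the statement and that $f$ preserves $\wedge,\bigvee$, obtaining $f(\bvalue{\varphi(a_1,\dots,a_n)}) = \bigvee_{u\in\dom{a_i}}^{\prime} f(a_i(u))\wedge' f(\bvalue{\psi(u)})$. Fixing a surjective witness $\varepsilon:\dom{a_i}\sur\dom{a_i'}$ for $\tuple{a_i,a_i'}\in\tilde f$ as in Definition \ref{def:tildef}, one has $f(a_i(u)) = a_i'(\varepsilon(u))$ and $\tuple{u,\varepsilon(u)}\in\tilde f$, so the induction hypothesis (applied to $\psi$, adjoining $\tuple{u,\varepsilon(u)}$ to the tuple) gives $f(\bvalue{\psi(u)})\leq'\bvalue{\psi(\varepsilon(u))}'$; substituting and re-indexing along the surjection $\varepsilon$ yields $\bigvee_{u'\in\dom{a_i'}}^{\prime} a_i'(u')\wedge'\bvalue{\psi(u')}' = \bvalue{\varphi(a_1',\dots,a_n')}'$. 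The bounded universal $\varphi\equiv\forall u\in a_i\,\psi(u)$ is the one delicate case: here $\bvalue{\forall u\in a_i\,\psi(u)} = \bigwedge_{u\in\dom{a_i}} a_i(u)\to\bvalue{\psi(u)}$, and $f$ preserves neither infinite meets nor Heyting implication. It does, however, preserve finite meets (so it is increasing) and satisfies $f(a\to b)\leq' f(a)\to' f(b)$, both noted in the proof of Theorem \ref{teo:tildef-pres-atomicas}; combining these with the induction hypothesis $f(\bvalue{\psi(u)})\leq'\bvalue{\psi(\varepsilon(u))}'$, the monotonicity of $\to'$ in its second argument, the equality $f(a_i(u)) = a_i'(\varepsilon(u))$, and surjectivity of $\varepsilon$, one gets $f(\bvalue{\varphi(a_1,\dots,a_n)})\leq'\bigwedge_{u'\in\dom{a_i'}}^{\prime} a_i'(u')\to'\bvalue{\psi(u')}' = \bvalue{\varphi(a_1',\dots,a_n')}'$.

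The main obstacle is exactly this last case: since a locale morphism need not commute with $\to$ or with infinite infima, equality fails there and one must settle for $\leq'$ — which is all the corollary asserts, and which is precisely what the inequality $f(a\to b)\leq' f(a)\to' f(b)$ delivers. Everything else is routine bookkeeping on top of the already-established preservation of finite meets and arbitrary joins and the atomic case handled by Theorem \ref{teo:tildef-pres-atomicas}.
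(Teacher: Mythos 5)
Your proof is correct and follows essentially the same route as the paper's: structural induction with the atomic case delegated to Theorem \ref{teo:tildef-pres-atomicas}, the connectives handled by preservation of finite meets and joins, the bounded existential by re-indexing along the surjective witness $\varepsilon$, and the bounded universal by the inequalities ``$f$ is increasing'' and $f(a\to b)\leq' f(a)\to' f(b)$. The only difference is that you make explicit the bookkeeping of quantifying the induction hypothesis over all parameter tuples, which the paper leaves implicit.
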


\begin{proof}
By induction in the complexity of the formula. The initial case, for atomic sentences, was shown in the previous theorem, and the cases with $\wedge$ and $\vee$ are immediate from the fact that $f$ preserves finite meets and joins. For quantifiers, the proof is similar to last theorem's proof. Let $\tuple{x,x'} \in \tilde{f}$ with witness $\varepsilon: \dom{x} \twoheadrightarrow \dom{x'}$. Then:
\begin{align*}
    f \left(\bvalue{ \exists u \in x \ \varphi(u) } \right) &= f \left(\bigvee\limits_{u \in \dom{x}} x(u) \wedge \bvalue{ \varphi(u) }\right) & \text{(by definition)}\\
     &= \bigvee\limits_{u \in \dom{x}}^{\hspace{18pt} \prime} f(x(u)) \wedge' f\left(\bvalue{ \varphi(u) } \right) & \left(\text{since $f$ preserves $\wedge,\bigvee$}\right)\\
     &\leq \bigvee\limits_{u \in \dom{x}}^{\hspace{18pt} \prime} f(x(u)) \wedge' \bvalue{ \varphi(\varepsilon(u)) } & \left(\text{by hypothesis and $\tuple{u,\varepsilon(u)} \in \tilde{f}$)}\right)\\
     &= \bigvee\limits_{u \in \dom{x}}^{\hspace{18pt} \prime} x'(\varepsilon(u)) \wedge' \bvalue{ \varphi(\varepsilon(u)) } & \left(\text{using that $\tuple{x,x'},\tuple{u,\varepsilon(u)} \in \tilde{f}$}\right)\\
     &= \bigvee\limits_{u' \in \dom{x'}}^{\hspace{18pt} \prime} x'(u') \wedge' \bvalue{ \varphi(u') } & \text{(since $\varepsilon$ is surjective)}\\
     &= \bvalue{ \exists u' \in x' \ \varphi(u') }' & \text{(by definition)}
\end{align*}
Similarly, we have
\begin{align*}
    f \left(\bvalue{ \forall u \in x \ \varphi(u) } \right) &= f\left(\bigwedge\limits_{u \in \dom{x}} x(u) \to \bvalue{ \varphi(u) } \right) & \text{(by definition)}\\
     &\leq \bigwedge\limits_{u \in \dom{x}}^{\hspace{18pt} \prime} f\left(x(u) \to \bvalue{ \varphi(u) }\right) & \text{(since $f$ is increasing)}\\
     &\leq \bigwedge\limits_{u \in \dom{x}}^{\hspace{18pt} \prime} f(x(u)) \to' f\left(\bvalue{ \varphi(u) }\right) & \left(\text{since $f(a \to b) \leq f(a) \to f(b)$}\right)\\
     &\leq \bigwedge\limits_{u \in \dom{x}}^{\hspace{18pt} \prime} f(x(u)) \to' \bvalue{ \varphi(\varepsilon(u)) }' & \text{(induction hypothesis)}\\
     &= \bigwedge\limits_{u \in \dom{x}}^{\hspace{18pt} \prime} x'(\varepsilon(u)) \to' \bvalue{ \varphi(\varepsilon(u)) }' & \text{(using that $\tuple{x,x'} \in \tilde{f}$)}\\
     &= \bigwedge\limits_{u' \in \dom{x'}}^{\hspace{18pt} \prime} x'(u') \to' \bvalue{ \varphi(u') }' & \text{(since $\varepsilon$ is surjective)}\\
     &= \bvalue{ \forall u' \in x' \ \varphi(u') }' & \text{(by definition)}
\end{align*}
\end{proof}

\subsection{Functorial properties}

Another consequence of the previous theorem is that, if $\bvalue{ x = z } = 1_H$, then, since $1_H = \bigwedge \emptyset$, we obtain:
$$f\left(\bvalue{ x = z }\right) = f(1_H) = 1_{H'} \leq \bvalue{ x' = z' }'$$
that is, $\bvalue{ x' = z' }' = 1_{H'}$. Therefore, when we take the quotient by $\equiv$, the semi-function $\tilde{f}$ defines an object mapping $\overline{f}: \Seth{\H} \to \Seth{\H'}$.

\begin{prop}$ $
\begin{enumerate}
    \item $\overline{id_\H} = id_{\Seth{\H}}: \Seth{\H} \to \Seth{\H}$;
    \item if $f': \H' \to \H''$ preserves finite meets and arbitrary joins, then $\overline{f'} \circ \overline{f} = \overline{f' \circ f}: \Seth{\H} \to \Seth{\H''}$.
\end{enumerate}
\end{prop}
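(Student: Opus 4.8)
The plan is to verify the two identities at the level of objects of $\Seth{\H}$, that is, on $\equiv$-equivalence classes $[x]$ of $\V{\H}$-names, using the recursive characterization of $\tilde{f}$ from Definition \ref{def:tildef} together with the preservation result Theorem \ref{teo:tildef-pres-atomicas}. Recall that $\overline{f}([x])$ is well-defined precisely because, by the remark preceding this proposition, any two representatives of $\tilde{f}(x)$ are $\equiv$-equivalent (and the complement to Definition \ref{def:tildef} closes the image under $\equiv$); so to prove $\overline{g} = \overline{h}$ as object maps it suffices to show that for every $x$ there is \emph{some} common witness-compatible choice, i.e.\ that $\tuple{x, x'} \in \tilde{g}$ and $\tuple{x, x'} \in \tilde{h}$ can be arranged for a single $x'$, or more robustly that $\tuple{x,x'}\in\tilde g$ and $\tuple{x,x''}\in\tilde h$ force $\bvalue{x'=x''}=1$.

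For part (1), I would show by induction on $\rank{x}$ that $\tuple{x,x} \in \widetilde{id_\H}_{\rank{x}}$: take $\varepsilon = id_{\dom{x}}$, which is a (bijective, hence surjective) witness; the square commutes because $id_\H \circ x = x \circ id_{\dom x}$; and the recursive clause $\tuple{u,\varepsilon(u)} = \tuple{u,u} \in \widetilde{id_\H}_{\rank x}$ holds for each $u \in \dom{x}$ by the induction hypothesis (noting $\rank u < \rank x$ and that $\bvalue{u=u}=1$). Hence $\widetilde{id_\H}$ contains the diagonal, so $\overline{id_\H}([x]) = [x]$ for all $x$, i.e.\ $\overline{id_\H} = id_{\Seth{\H}}$.

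For part (2), I would again argue by induction on $\rank{x}$ that if $\tuple{x,x'} \in \tilde{f}$ is witnessed by $\varepsilon : \dom{x} \sur \dom{x'}$ and $\tuple{x',x''} \in \tilde{f'}$ is witnessed by $\varepsilon' : \dom{x'} \sur \dom{x''}$, then $\tuple{x,x''} \in \widetilde{f' \circ f}$, witnessed by the composite surjection $\varepsilon' \circ \varepsilon : \dom{x} \sur \dom{x''}$: the outer square commutes since $x'' \circ (\varepsilon'\circ\varepsilon) = (x'' \circ \varepsilon')\circ \varepsilon = (f' \circ x')\circ \varepsilon = f'\circ(x'\circ\varepsilon) = f'\circ(f\circ x) = (f'\circ f)\circ x$, and the recursive condition for $u \in \dom x$ follows from the induction hypothesis applied to $u$, $\varepsilon(u)$, $\varepsilon'(\varepsilon(u))$ — modulo replacing $\varepsilon'(\varepsilon(u))$ by a genuinely $\tilde{f'}$-related element $v$ with $\bvalue{v = \varepsilon'(\varepsilon(u))}=1$, which the definition of $\tilde{f'}$ provides. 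This shows $\widetilde{f'\circ f} \supseteq \tilde{f'}\circ\tilde{f}$ at the level of witnessed pairs, which after passing to the quotient gives $\overline{f'}\circ\overline{f} = \overline{f'\circ f}$ on objects; that $f'\circ f$ is again a locale morphism is immediate since composites of $(\wedge,\bigvee)$-preserving maps preserve $\wedge$ and $\bigvee$.

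The main obstacle is bookkeeping around the $\equiv$-closure: the composite witness $\varepsilon'\circ\varepsilon$ lands in $\dom{x''}$, but the recursive clause of the definition of $\widetilde{f'\circ f}$ asks for each $u$ an element $v$ with $u \mathrel{\widetilde{f'\circ f}} v$ and $\bvalue{v = (\varepsilon'\circ\varepsilon)(u)} = 1$, so one must chase the "up to $\bvalue{\cdot=\cdot}=1$" slack through two layers and invoke Theorem \ref{teo:tildef-pres-atomicas} (together with the fact that $f,f'$ preserve $1$) to see that equalities of value $1$ are transported along $\tilde f$ and $\tilde{f'}$. Care is also needed that $\overline{f}$ is only claimed to be an \emph{object} assignment here, so no compatibility with arrows has to be checked at this stage.
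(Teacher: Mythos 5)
Your proposal is correct, and for part (2) it follows essentially the paper's own argument: compose the two witnessing surjections, check that the outer square commutes, handle the recursive clause by induction modulo the $\bvalue{\cdot=\cdot}=1$ slack, and then upgrade the resulting inclusion $\overline{f'}\circ\overline{f}\subseteq\overline{f'\circ f}$ to an equality because both sides are (object) functions on the quotient; you are in fact slightly more explicit than the paper about chasing the $\equiv$-closure through the two layers. For part (1) you take a genuinely different route. The paper proves, by induction on $\alpha$, that \emph{every} pair $\tuple{x,y'}\in(\widetilde{id_\H})_\alpha$ satisfies $\bvalue{x=y'}=1$, via an explicit computation showing $x(u)\leq\bvalue{u\in x'}$ and $x'(v')\leq\bvalue{v'\in x}$ for the witnessed representative $x'$. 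You instead exhibit the single pair $\tuple{x,x}\in\widetilde{id_\H}$ (witnessed by $\varepsilon=id_{\dom{x}}$) and then invoke the single-valuedness of $\overline{id_\H}$ on the quotient, which was already secured by Theorem \ref{teo:tildef-pres-atomicas}. Your version is shorter and avoids redoing the $\bvalue{x=x'}=1$ computation, at the cost of leaning on the well-definedness of $\overline{f}$ as a prior fact; the paper's version is self-contained and yields the slightly stronger pointwise statement that every $\widetilde{id_\H}$-image of $x$ is $\equiv$-equal to $x$, not just that some image is. Both are sound.
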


\begin{proof}$ $
\begin{enumerate}
    \item We show that, for all $\alpha \in \ord$, if $\tuple{x,y'} \in (\widetilde{id_\H})_\alpha$, then $\bvalue{ x = y' } = 1$. Suppose, inductively, that this is the case for all $\beta < \alpha$, and let $\varepsilon: \dom{x} \twoheadrightarrow \dom{x'}$ (with $\bvalue{ x' = y' } = 1$) witness $\tuple{x,y'} \in (\widetilde{id_\H})_\alpha$. Then, $x' \circ \varepsilon = id_\H \circ x = x$, and for all $u \in \dom{x}$, $\tuple{u,\varepsilon(x)} \in (\widetilde{id_\H})_{\varrho(x)}$, thereby $\bvalue{ \varepsilon(u) = u } = 1$ (using the induction hypothesis). Thus, for all $u \in \dom{x}$, since $\varepsilon$ is surjective we have:
    $$x(u) = x'(\varepsilon(u)) = x'(\varepsilon(u)) \wedge \bvalue{ \varepsilon(u) = u } \leq \bigvee\limits_{w' \in \dom{x'}} x'(w') \wedge \bvalue{ w' = u } = \bvalue{ u \in x' }$$
    Similarly, for all $v' \in \dom{x'}$, there exists $v \in \dom{x}$ such that $\varepsilon(v) = v'$, and 
    $$x'(v') = x'(\varepsilon(v)) = x(v) = x(v) \wedge \bvalue{ \varepsilon(v) = v } \leq \bigvee\limits_{w \in \dom{x}} x(w) \wedge \bvalue{ w = v' } = \bvalue{ v' \in x }$$
    Observe that $x(u) \leq \bvalue{ u \in x' }$ if, and only if, $1 \leq x(u) \to \bvalue{ u \in x' }$, for all $u \in \dom{x}$; that is, $\bigwedge\limits_{u \in \dom{x}} x(u) \to \bvalue{ u \in x' } = 1$. Analogously, $x'(v') \leq \bvalue{ v' \in x' }$ for all $v' \in \dom{x'}$ is equivalent to $\bigwedge\limits_{v' \in \dom{x'}} x'(v') \to \bvalue{ v' \in x } = 1$. Therefore:
    $$\bvalue{ x = x' } = \bigwedge\limits_{u \in \dom{x}} \left(x(u) \to \bvalue{ u \in x' } \right) \wedge \bigwedge\limits_{v' \in \dom{x'}} \left( x'(v') \to \bvalue{ v' \in x } \right) = 1$$
    Finally, since $\bvalue{ x = x' } = 1 = \bvalue{ x' = y' }$, we may conclude that $\bvalue{ x = y' } = 1$, as desired.
    
    Now, by the definition of $\equiv$, $\bvalue{ x = y' } = 1$ if, and only if, $[x] = [y']$. As a result, taking the quotient, $\tuple{[x],[y']} \in \overline{id_\H}$ if, and only if, $[x] = [y']$, hence $\overline{id_\H}$ is the identity in $\Seth{\H}$.
    \item Let  $\tuple{[x],[z'']} \in \overline{f'} \circ \overline{f}$, \emph{i.e.}, there exists $[y'] \in \Seth{\H}$ such that $\tuple{[x],[y']} \in \overline{f}$ and $\tuple{[y'],[z'']} \in \overline{f'}$. Consider $\varepsilon: \dom{x} \twoheadrightarrow \dom{x'}$, with $x' \in [y']$, a witness of $\tuple{[x],[y']} \in \overline{f}$, and $\varepsilon': \dom{x'} \twoheadrightarrow \dom{y''}$, with $y'' \in [z'']$, a witness of $\tuple{[y'],[z'']} = \tuple{[x'],[z'']} \in \overline{f'}$. Then, $\varepsilon' \circ \varepsilon: \dom{x} \to \dom{y''}$ witnesses $\tuple{[x],[z'']} \in \overline{f' \circ f}$. That is, we have shown that $\overline{f'} \circ \overline{f} \subseteq \overline{f' \circ f}$, and since both are functions, we obtain $\overline{f'} \circ \overline{f} = \overline{f' \circ f}$.
\end{enumerate}
\end{proof}


At last, using that $f\left(\bvalue{ y \in x }\right) \leq' \bvalue{ y' \in x' }'$, it can be shown that, if $\bvalue{\fun{h: x \to y}} = 1$, then $\bvalue{\fun{\tilde{f}(h): \tilde{f}(x) \to \tilde{f}(y)}} = 1'$, and:
$$\tilde{f}(id_x) = id_{\tilde{f}(x)}: \tilde{f}(x) \to \tilde{f}(x) \quad \text{ and } \quad \bvalue{ \fun{\tilde{f}(id_x): \tilde{f}(x) \to \tilde{f}(x)}}' = 1'$$
Besides, if $\bvalue{\fun{g: y \to z}} = 1$, then:
$$\tilde{f}(g \circ h) = \tilde{f}(g) \circ \tilde{f}(h): \tilde{f}(x) \to \tilde{f}(z) \quad \text{ and } \quad \bvalue{ \fun{\tilde{f}(g \circ h): \tilde{f}(x) \to \tilde{f}(z)}}' = 1'$$
That is, by taking the quotient, $\overline{f}: \Seth{\H} \to \Seth{\H'}$ actually defines a functor.

As we saw in the first section, a $\left(\wedge, \bigvee \right)$-preserving function between Heyting algebras induces a functor between the corresponding sheaf topos which preserves finite limits and arbitrary colimits (the left adjoint of a geometric morphism). More precisely, using the natural equivalences $\hset \simeq \Sh{\H}$ and $\H'\text{-}\Set \simeq \Sh{\H'}$, such a function $f: \H \to \H'$ gives rise to a function $\varphi_f: \hset \to \H'\text{-}\Set$ given by:
\begin{diagram}
    \tuple{X,\delta} & & \tuple{X,f \circ \delta}\\
    \dTo<\phi & \quad \longmapsto \quad & \dTo>{f \circ \phi}\\
    \tuple{Y,\tau} & & \tuple{Y, f \circ \tau}
\end{diagram}
where $f \circ \phi: X \times Y \to \H'$. Thus, we investigate how $\tilde{f}$ may induce a morphism of $\H'$-sets.

\begin{prop}
Let $\tuple{x,x'} \in \tilde{f}$ with $\varepsilon: \dom{x} \twoheadrightarrow \dom{x'}$ as witness. Consider the function $\varepsilon^{H'}: \dom{x} \times \dom{x'} \to \H'$ given by:
$$\varepsilon^{H'}(u,v') \defeq f\left(\bvalue{ u \in x } \right) \wedge' \bvalue{ \varepsilon(u) = v' }' \wedge' \bvalue{ v' \in x' }', \text{ for all } \tuple{u,v'} \in \dom{x} \times \dom{x'}$$
Then,  $\varepsilon^{H'}$ defines a morphism of $\H$-sets $\varepsilon^{H'}: (\dom{x},f \circ \delta_x) \to (\dom{x'},\delta_{x'})$ which does not depend on the choice of witness, where
$$\delta_x(u,v) \defeq \bvalue{ u \in x } \wedge \bvalue{ u = v }, \text{ for all } u,v \in \dom{x}$$
$$\delta_{x'}(u',v') \defeq \bvalue{ u' \in x' }' \wedge' \bvalue{ u' = v' }', \text{ for all } u',v' \in \dom{x'}$$
\end{prop}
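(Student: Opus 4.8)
The plan is to verify the four defining conditions of a morphism of $\H'$-sets for $\varepsilon^{H'}$ (the codomains of all the maps involved lie in $\H'$, so ``$\H'$-set'' is what is meant here), and then to prove independence of the witness. First I would record three preliminaries. (i) Since $f$ preserves finite meets it is monotone and sends $1_\H=\bigwedge\emptyset$ to $1_{\H'}$; and using the basic properties of the valuation (symmetry and transitivity of $\bvalue{\cdot=\cdot}$, and $\bvalue{u=v}\wedge\bvalue{u\in x}\leq\bvalue{v\in x}$) one gets $\delta_x(u,v)=\bvalue{u\in x}\wedge\bvalue{u=v}\wedge\bvalue{v\in x}$, which is symmetric and transitive, so $(\dom{x},\delta_x)$ is an $\H$-set and $(\dom{x},f\circ\delta_x)$, $(\dom{x'},\delta_{x'})$ are $\H'$-sets. (ii) Since $\varepsilon$ witnesses $\tuple{x,x'}\in\tilde f$, we have $x'\circ\varepsilon=f\circ x$, and by Definition \ref{def:tildef} together with its complement clause $\tuple{u,\varepsilon(u)}\in\tilde f$ for every $u\in\dom{x}$; hence Theorem \ref{teo:tildef-pres-atomicas} yields $f(\bvalue{u\in x})\leq'\bvalue{\varepsilon(u)\in x'}'$ and $f(\bvalue{u=v})\leq'\bvalue{\varepsilon(u)=\varepsilon(v)}'$ for all $u,v\in\dom{x}$. (iii) As a consequence, $\varepsilon^{H'}(u,\varepsilon(u))=f(\bvalue{u\in x})\wedge'1'\wedge'\bvalue{\varepsilon(u)\in x'}'=f(\bvalue{u\in x})$.

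With these in hand, each of the four axioms reduces to lattice algebra in $\H'$: one discards redundant conjuncts and applies symmetry and transitivity of $\bvalue{\cdot=\cdot}'$ together with (ii). Axiom 3, $\varepsilon^{H'}(u,v')\wedge'\varepsilon^{H'}(u,w')\leq'\delta_{x'}(v',w')$, follows from $\bvalue{v'=\varepsilon(u)}'\wedge'\bvalue{\varepsilon(u)=w'}'\leq'\bvalue{v'=w'}'$. Axiom 1, $\delta_{x'}(v',w')\wedge'\varepsilon^{H'}(u,w')\leq'\varepsilon^{H'}(u,v')$, follows from $\bvalue{\varepsilon(u)=w'}'\wedge'\bvalue{w'=v'}'\leq'\bvalue{\varepsilon(u)=v'}'$, noting that $f(\bvalue{u\in x})$ and $\bvalue{v'\in x'}'$ already occur on the left-hand side. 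Axiom 2, $(f\circ\delta_x)(u,v)\wedge'\varepsilon^{H'}(u,w')\leq'\varepsilon^{H'}(v,w')$, uses $f(\delta_x(u,v))=f(\bvalue{u\in x})\wedge'f(\bvalue{u=v})\wedge'f(\bvalue{v\in x})$ and then $f(\bvalue{u=v})\leq'\bvalue{\varepsilon(u)=\varepsilon(v)}'$ plus transitivity to replace $\bvalue{\varepsilon(u)=w'}'$ by $\bvalue{\varepsilon(v)=w'}'$. And axiom 4, which states $\bigvee_{v'\in\dom{x'}}\varepsilon^{H'}(u,v')=(f\circ\delta_x)(u,u)$ with the join computed in $\H'$, holds because $(f\circ\delta_x)(u,u)=f(\bvalue{u\in x})$, each term of the join is $\leq'f(\bvalue{u\in x})$, and the term at $v'=\varepsilon(u)$ already equals $f(\bvalue{u\in x})$ by (iii); here only $\varepsilon(u)\in\dom{x'}$ is used, not surjectivity of $\varepsilon$.

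For independence of the witness, let $\varepsilon,\eta:\dom{x}\twoheadrightarrow\dom{x'}$ both witness $\tuple{x,x'}\in\tilde f$. Then $\tuple{u,\varepsilon(u)}$ and $\tuple{u,\eta(u)}$ both lie in $\tilde f$, so Theorem \ref{teo:tildef-pres-atomicas} applied with $\bvalue{u=u}=1_\H$ gives $1_{\H'}=f(1_\H)\leq'\bvalue{\varepsilon(u)=\eta(u)}'$, i.e. $\bvalue{\varepsilon(u)=\eta(u)}'=1'$. By symmetry and transitivity of $\bvalue{\cdot=\cdot}'$ this forces $\bvalue{\varepsilon(u)=v'}'=\bvalue{\eta(u)=v'}'$ for every $v'\in\dom{x'}$, hence $\varepsilon^{H'}(u,v')=\eta^{H'}(u,v')$ for all $u$ and $v'$, and Proposition \ref{prop:igualdade-morf-hsets} gives $\varepsilon^{H'}=\eta^{H'}$.

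I expect the main difficulty to be endurance rather than ideas: the manipulations are long but routine. The genuine content sits in axioms 2 and 4 and in the independence argument, all of which rest on Theorem \ref{teo:tildef-pres-atomicas} — that is, on $\varepsilon$ being a ``functional fragment'' of $\tilde f$ and not merely an arbitrary surjection $\dom{x}\twoheadrightarrow\dom{x'}$; everything else holds in an arbitrary complete Heyting algebra. The one subtle point to get right is the empty-meet observation $f(1_\H)=1_{\H'}$, since it is exactly what upgrades the inequality of Theorem \ref{teo:tildef-pres-atomicas} to the equality $\bvalue{\varepsilon(u)=\eta(u)}'=1'$ that underlies witness-independence.
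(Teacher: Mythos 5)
Your proposal is correct and follows essentially the same route as the paper: the four axioms are verified by the same lattice manipulations (transitivity of $\bvalue{\cdot=\cdot}'$, the inequalities $f(\bvalue{u\in x})\leq'\bvalue{\varepsilon(u)\in x'}'$ and $f(\bvalue{u=v})\leq'\bvalue{\varepsilon(u)=\varepsilon(v)}'$ from Theorem \ref{teo:tildef-pres-atomicas}, and the term at $v'=\varepsilon(u)$ for the join condition), and witness-independence is obtained exactly as in the paper from $1'=f(\bvalue{u=u})\leq'\bvalue{\varepsilon(u)=\tau(u)}'$ together with Proposition \ref{prop:igualdade-morf-hsets}. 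Your side remarks (that the codomain objects are $\H'$-sets, and that only $\varepsilon(u)\in\dom{x'}$ rather than surjectivity is needed for axiom 4) are accurate.
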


Note that $\delta_x$ and $\delta_{x'}$ are exactly the ones used in the equivalence $\hset \simeq \Seth{\H}$ because, since $\bvalue{ u \in x } \wedge \bvalue{ u = v } \leq \bvalue{ v \in x }$, we have:
$$\bvalue{ u \in x } \wedge \bvalue{ u = v }  \wedge \bvalue{ v \in x } = \bvalue{ u \in x } \wedge \bvalue{ u = v }$$

\begin{proof}
We verify the four conditions that define a morphism of $\H$-sets. Let $u,v \in \dom{x}$ and $u',v' \in \dom{x'}$.
\begin{enumerate}[topsep=0pt]
    \item $\delta_{x'}(u',v') \wedge' \varepsilon^{H'}(u,v') \leq \varepsilon^{H'}(u,u')$. Indeed,
    \begin{align*}
        \delta_{x'}&(u',v') \wedge' \varepsilon^{H'}(u,v') =\\
         &= \bvalue{ u' \in x' }' \wedge' \bvalue{ u' = v' }' \wedge' f\left(\bvalue{ u \in x } \right) \wedge' \bvalue{ \varepsilon(u) = v' }' \wedge' \bvalue{ v' \in x' }'\\
         &\leq f\left(\bvalue{ u \in x } \right) \wedge' \bvalue{ \varepsilon(u) = u' }' \wedge' \bvalue{ u' \in x' }' \\
         &= \varepsilon^{H'}(u,u')
    \end{align*}
    \item $(f \circ \delta_x)(u,v) \wedge' \varepsilon^{H'}(u,v') \leq \varepsilon^{H'}(v,v')$. Indeed,
    \begin{align*}
        (f &\circ \delta_x)(u,v) \wedge' \varepsilon^{H'}(u,v') =\\
        &= f\left(\bvalue{ u \in x } \wedge \bvalue{ u = v }\right) \wedge' f\left(\bvalue{ u \in x } \right) \wedge' \bvalue{ \varepsilon(u) = v' }' \wedge' \bvalue{ v' \in x' }' \\
        &\leq f\left(\bvalue{ v \in x }\right) \wedge' \bvalue{ \varepsilon(u) = \varepsilon(v) }' \wedge' \bvalue{ \varepsilon(u) = v' }' \wedge' \bvalue{ v' \in x' }' \\
        &\leq f\left(\bvalue{ v \in x }\right) \wedge' \bvalue{ \varepsilon(v) \in v' }' \wedge' \bvalue{ v' \in x' }' \\
        &= \varepsilon^{H'}(v,v')
    \end{align*}
    \item $\varepsilon^{H'}(u,u') \wedge \varepsilon^{H'}(u,v') \leq \delta_{x'}(u',v')$. Indeed,
    \begin{align*}
        \varepsilon^{H'}&(u,u') \wedge \varepsilon^{H'}(u,v') = \\
        &= f\left(\bvalue{ u \in x } \right) \wedge' \bvalue{ \varepsilon(u) = u' }' \wedge' \bvalue{ u' \in x' }' \wedge' f\left(\bvalue{ u \in x } \right) \wedge' \\
        & \hspace{6.4cm} \wedge' \bvalue{ \varepsilon(u) = v' }' \wedge' \bvalue{ v' \in x' }' \\
        &\leq f\left(\bvalue{ u \in x } \right) \wedge' \bvalue{ u' = v' }' \wedge' \bvalue{ u' \in x' }' \wedge' \bvalue{ v' \in x' }' \\
        &\leq \bvalue{ v' \in x' }' \wedge' \bvalue{ u' = v' }' \\
        &= \delta_{x'}(u',v') 
    \end{align*}
    \item $\bigvee\limits_{w' \in \dom{x'}}^{\hspace{18pt} \prime} \varepsilon^{H'}(u,w') = (f \circ \delta_x)(u,u)$. Indeed,
    \begin{itemize}
        \item for all $w' \in \dom{x'}$, using that $f$ preserves $1$, we have:
        \begin{align*}
            \varepsilon^{H'}(u,w') &= f\left(\bvalue{ u \in x } \right) \wedge' \bvalue{ \varepsilon(u) = w' }' \wedge' \bvalue{ w' \in x' }' \\
             &\leq f\left(\bvalue{ u \in x } \right) \wedge' 1_{H'} = f\left(\bvalue{ u \in x } \right) \wedge' f(1_H) \\
             &= f\left(\bvalue{ u \in x } \right) \wedge' f\left(\bvalue{ u \in u } \right) =f\left(\bvalue{ u \in x } \wedge \bvalue{ u = u }\right) \\
             &= (f \circ \delta_x)(u,u)
        \end{align*}
         thus, $\bigvee\limits_{w' \in \dom{x'}}^{\hspace{18pt} \prime} \varepsilon^{H'}(u,w') \leq (f \circ \delta_x)(u,u)$;
         \item on the other hand, since $\varepsilon(u) \in \dom{x'}$,
         \begin{align*}
             \bigvee\limits_{w' \in \dom{x'}}^{\hspace{18pt} \prime} \varepsilon^{H'}(u,w') &\geq \varepsilon^{H'}\tuple{u,\varepsilon(u)} = \\
              &= f\left(\bvalue{ u \in x } \right) \wedge' \bvalue{ \varepsilon(u) = \varepsilon(u) }' \wedge' \bvalue{ \varepsilon(u) \in x' }' \\
              &= f\left(\bvalue{ u \in x } \right) \wedge' 1_{H'} \wedge' \bvalue{ \varepsilon(u) \in x' }' \\
              &= f\left(\bvalue{ u \in x } \right) \wedge' \bvalue{ \varepsilon(u) \in x' }' \\
              &\geq f\left(\bvalue{ u \in x } \right) \wedge' f\left(\bvalue{ u \in x } \right) = f\left(\bvalue{ u \in x } \right) \\
              &= f\left(\bvalue{ u \in x } \wedge 1_H\right) = f\left(\bvalue{ u \in x } \wedge \bvalue{ u = u }\right)\\
              &= (f \circ \delta_x)(u,u)
         \end{align*}
         Therefore, $\bigvee\limits_{w' \in \dom{x'}}^{\hspace{18pt} \prime} \varepsilon^{H'}(u,w') \geq (f \circ \delta_x)(u,u)$.
    \end{itemize}
\end{enumerate}
Finally, note that this result does not depend on the choice of witness: let $\tuple{u,v'} \in \dom{x} \times \dom{x'}$ and $\tau: \dom{x} \twoheadrightarrow \dom{x'}$ be a witness of $\tuple{x,x'} \in \tilde{f}$. Then, since $u \in \dom{x}$, we have $\tuple{u,\varepsilon(u)} \in \tilde{f}$ e $\tuple{u,\tau(u)} \in \tilde{f}$; and since $1 = \bvalue{ u = u }$, the previous theorem gives us:
$$1' = f(1) = f\left(\bvalue{ u = u }\right) \leq \bvalue{ \tau(u) = \varepsilon(u) }'$$
Thus,
\begin{align*}
    \varepsilon^{H'}(u,v') &= f\left(\bvalue{ u \in x } \right) \wedge' \bvalue{ \varepsilon(u) = v' }' \wedge' \bvalue{ v' \in x' }' = \\
    &= f\left(\bvalue{ u \in x } \right) \wedge' 1_{H'} \wedge' \bvalue{ \varepsilon(u) = v' }' \wedge' \bvalue{ v' \in x' }' \\
    &= f\left(\bvalue{ u \in x } \right) \wedge' \bvalue{ \tau(u) = \varepsilon(u) }' \wedge' \bvalue{ \varepsilon(u) = v' }' \wedge' \bvalue{ v' \in x' }' \\
    &\leq f\left(\bvalue{ u \in x } \right) \wedge' \bvalue{ \tau(u) = v' }' \wedge' \bvalue{ v' \in x' }' \\
    &= \tau^{H'}(u,v')
\end{align*}
thereby $\varepsilon^{H'}(u,v') \leq \tau^{H'}(u,v')$. The proof that $\tau^{H'}(u,v') \leq \varepsilon^{H'}(u,v')$ is analogous.
\end{proof}

\begin{remark}
The idea now would be to show that such morphisms $\varepsilon^{H'}$ are isomorphisms, which could be used to build a natural isomorphism between $\overline{f}$ and $\varphi_f$. To show that, a possibility would be to use the characterization of monomorphisms and epimorphisms in $\hset$ (see \cite{Bor08c}, Propositions 2.8.8 and 2.8.7), that is, to show that for all $u,v \in \dom{x}$ and $u' \in \dom{x'}$:
\begin{itemize}
    \item $\varepsilon^{H'}(u,u') \wedge \varepsilon^{H'}(v,u') \leq (f \circ \delta_x) (u,v)$ (which is equivalent to $\varepsilon^{H'}$ being monic);
    \item $\bigvee\limits_{w \in \dom{x}} \varepsilon^{H'} (w,u') = \delta_{x'} (u',u')$ (which is equivalent to $\varepsilon^{H'}$ being epic);
\end{itemize}
and, since $\hset$ is a topos, $\varepsilon^{H'}$ would be an isomorphism.

Now, expanding the definitions,
\begin{align*}
    \varepsilon^{H'}&(u,u') \wedge \varepsilon^{H'}(v,u') =\\
     &= f\left(\bvalue{ u \in x } \right) \wedge' \bvalue{ \varepsilon(u) = u' }' \wedge' \bvalue{ u' \in x' }' \wedge' f\left(\bvalue{ v \in x } \right) \wedge' \bvalue{ \varepsilon(v) = u' }' \wedge' \bvalue{ u' \in x' }' \\
     &\leq f\left(\bvalue{ u \in x } \right) \wedge' \bvalue{ \varepsilon(u) = u' }' \wedge' \bvalue{ \varepsilon(v) = u' }' \\
     &\leq f\left(\bvalue{ u \in x } \right) \wedge'\bvalue{ \varepsilon(u) = \varepsilon(v) }' \\
     & \qquad \text{ and we want to show that } \leq f\left(\bvalue{ u \in x } \wedge \bvalue{ u = v } \right) = (f \circ \delta_x) (u.v)
\end{align*}
\begin{align*}
    \bigvee\limits_{w \in \dom{x}} \varepsilon^{H'} (w,u') &= \bigvee\limits_{w \in \dom{x}} f\left(\bvalue{ w \in x } \right) \wedge' \bvalue{ \varepsilon(w) = u' }' \wedge' \bvalue{ u' \in x' }' \\
     &= \bigvee\limits_{w \in \dom{x}} f\left(\bvalue{ w \in x } \right) \wedge' \bvalue{ \varepsilon(w) = \varepsilon(t) }' \wedge' \bvalue{ \varepsilon(t) \in x' }' \text{($\varepsilon$ is surjective)} \\
     &\geq f\left(\bvalue{ t \in x } \right) \wedge' \bvalue{ \varepsilon(t) = \varepsilon(t) }' \wedge' \bvalue{ \varepsilon(t) \in x' }' \\
     & \text{ and we want to show that } =  \bvalue{ u' \in x' }' = \delta_{x'}(u',u')
\end{align*}
(the other inequality for the epimorphism condition is trivial, because of the meet's properties). 

These inequalities can be achieved whenever
$f$ preserves meets and preserves strictly the values of atomic formulas
that is: if $\tuple{x,x'}, \tuple{y,y'},\tuple{z,z'} \in \tilde{f}$, then
$$f\left( \bvalue{ x \in y } \right) = \bvalue{ x' \in y' }' \qquad \qquad f\left( \bvalue{ x = z } \right) = \bvalue{ x' = z'' }'$$

Therefore, observing the proof of the aforementioned theorem, note that we may obtain these inequalities (and, thus, that $\varepsilon^{\H'}$ is iso) at least in the case that $f: \H \to \H'$ preserves (strictly) the implication and both arbitrary meets and joins. With that hypothesis, we could also adapt the corollary to the theorem to obtain the strict preservation of $\H$-values of all formulas with bounded quantifiers.

\end{remark}

\section{Final Remarks and Future Works}\label{sec:final}

\begin{remark}
The categorical and semantical correspondences between local set theories (= topoi, see \cite{Bel88}) and cumulative (constructions in) set theories has been studied since the late 1970s : \cite{Fou80}, \cite{Hay81}, \cite{BS92}, \cite{Shu10}, \cite{Yam18}. It will be interesting to determine in what level  this semantical correspondence is compatible with the change of basis given by a locale morphism $f: \H \to \H'$. \\
Possible extensions of this correspondence to other kinds of categories associated to other complete lattices (eventually endowed with additional structure \cite{LT15}) could give us a clue of what are the ``right semantical notions" of the less structured side of the correspondence (\emph{i.e.}, the cumulative construction), since the notion of $\H$-set can be extended to more general algebras (\cite{Men19}).



\end{remark}

\begin{remark}
In a different direction, another aspect that could be analysed is if the ``lifting property" through $\V{\H} \twoheadrightarrow \Sh{\H}$ also holds for other natural topoi  morphisms, such as the {\em logical functors}. Since logical functors and (the left part of) geometric morphism   coincide only trivially (\emph{i.e.} iff when both are equivalences of categories), this will be in fact a new direction to pursue.

Note that the ``conceptual orthogonality" between the two kind of functors
$\Sh{\H} \to \Sh{\H'}$ occurs already in the algebraic level of arrows $\H \to \H'$. More precisely, given a non-trivial complete Boolean algebra $(\B, \leq)$ and the unique injective  morphism  $i : \mathbf{2} \hookrightarrow \B$ (where $\mathbf{2} = \{0,1\}$), we get three kinds of morphisms $\B \to \mathbf{2}$:
\begin{itemize}[topsep=0pt]
    \item $l: \B \to \mathbf{2}$ is the left adjoint of $i$ (given by  $l(x) = 0 \Leftrightarrow x = 0$): it preserves only the suprema;
    \item $r: \B \to \mathbf{2}$ is the right adjoint of $i$ (given by $r(x) = 1 \Leftrightarrow x = 1$): it preserves only the infima;
    \item $U: \B \to \mathbf{2}$ is the quotient by an ultrafilter $U$, that preserves $0$, $1$, negation, implication, finite sups and finite infs.
\end{itemize}

On the other hand, note that a logical functor  $\Sh{\H} \to \Sh{\H'}$ induces a Heyting algebra morphism $\H \to \H'$ (since $\H \cong Subobj(\mathbf{1})$). Therefore, we would expect to be able to establish a correspondence between other kind of morphisms $\H \to \H'$ and the logical functors $\Sh{\H} \to \Sh{\H'}$ and ask how they are related to some alternative notion of induced arrow $\V{\H} \to \V{\H'}$.

In particular, it seems be natural to consider the connections between the various ``forcing relations"  (according the previous remark),  classical and intuitionistic, related to the canonical morphisms between complete Boolean algebras associated to a complete Heyting algebra $Reg(\H) \hookrightarrow \H$  and $\H \twoheadrightarrow \displaystyle \frac{\H}{\langle x \leftrightarrow \neg\neg x \rangle}$.





\end{remark}

\newpage

  
\end{document}